\definecolor{Green}{rgb}{0,.8,0}
\newtheorem{proposition}{Proposition}[section]
\newtheorem{theorem}[proposition]{Theorem}
\newtheorem{lemma}[proposition]{Lemma}
\newtheorem{definition}[proposition]{Definition}
\newtheorem{corollary}[proposition]{Corollary}
\theoremstyle{remark}
\newtheorem{remark}[proposition]{Remark}
\newtheorem{example}[proposition]{Example}
\def\CC{\mathcal{C}}
\def\kk{\mathbf{k}}  
\def\CM{\mathcal{M}}
\def\CR{\mathcal{R}}
\def\CT{\mathcal{T}}
\def\CV{\mathcal{V}}
\def\CW{\mathcal{W}}
\def\CB{\mathcal{B}}
\def\id{{\mathrm{Id}}}
\newcommand{\C}{\mathbb C}
\newcommand{\glie}{\mathfrak{g}}
\newcommand{\ggot}{\mathfrak{g}}
\newcommand{\hlie}{\mathfrak{h}}
\newcommand{\adiag}{A_{\mathrm{diag}}}
\newcommand{\asub}{A_{\mathrm{sub}}}
\newcommand{\gdiag}{ \glie_{\mathrm{diag}}}
\newcommand{\hdiag}{ \hlie_{\mathrm{diag}}}
\newcommand{\gsub}{ \glie_{\mathrm{sub}}}
\newcommand{\hsub}{ \hlie_{\mathrm{sub}}}
\newcommand{\glsub}{ \mathfrak{gl}_{\mathrm{sub}}}
\newcommand{\liealg}{\mathrm{Lie}}
\newcommand\cercle[1]{{\large \textcircled{\normalsize{#1}}}}
\newcommand\displaystylee{ }
\begin{document}
\sloppy
\title[Galois-Lie algebra of a reducible differential system]{Computing the Lie algebra of the differential Galois group: the reducible case}

\author{Thomas Dreyfus}
\address{Institut de Recherche Math\'ematique Avanc\'ee, \textsc{u.m.r.} 7501,\\
Universit\'e de Strasbourg et \textsc{c.n.r.s.},  7 rue Ren\'e Descartes, 67084
Strasbourg, France}
\email{dreyfus@math.unistra.fr}
\thanks{This work has received funding from the European Research Council (ERC) under the European Union's Horizon 2020 research and innovation programme under the Grant Agreement No 648132; it has also been partially supported by the LabEx PERSYVAL-Lab (ANR-11-LABX-0025-01) funded by the French program Investissement d'avenir, and ANR project \emph{De Rerum Natura} ANR-19-CE40-0018.
}
\author{Jacques-Arthur Weil}
\address{XLIM,  \textsc{u.m.r.} 7252, Universit\'e de Limoges et \textsc{c.n.r.s.} \\ 123 avenue Albert Thomas, 87060 Limoges Cedex, France}
\email{weil@unilim.fr}

\subjclass[2010]{}
  \renewcommand{\subjclassname}{%
    \textup{2010} Mathematics Subject Classification}
\subjclass[2010]{Primary 
34A05, 
68W30, 
34M03, 
34M15, 
34M25, 
17B45.  
}

\keywords{
Ordinary Differential Equations, Differential Galois Theory, Computer Algebra, Lie Algebras.
}

\date{\today}

\begin{abstract}
In this paper, we explain how to compute the Lie algebra of the differential Galois group of a reducible linear differential system. We achieve this by showing how to transform a  block-triangular linear differential system into a Kolchin-Kovacic reduced form. We combine this with other reduction results to propose a general  algorithm for computing a reduced form of a general linear differential system. In particular, this provides directly the Lie algebra of the differential Galois group without an a priori computation of this Galois group. 
\end{abstract} 

\maketitle 

\setcounter{tocdepth}{1}
\tableofcontents


\section*{Introduction}
Let $\mathcal{A}(x)\in \CM_n(\mathbf{k})$ denote an $n\times n$ matrix with coefficients in a differential field
$(\mathbf{k},\partial)$ of characteristic zero, {for instance $\mathbf{k}=\widebar{\mathbb{Q}}(x)$}.
We consider the linear differential system $[\mathcal{A}] : \;  Y'(x)=\mathcal{A}(x)Y(x)$.  
The \emph{differential Galois group} $G$ of $[\mathcal{A}]$ is an algebraic group which somehow measures the algebraic relations among the entries of a fundamental solution matrix of $[\mathcal{A}]$.
The aim of this paper is to explain how to compute effectively the Lie algebra $\mathfrak{g}$ of the differential Galois group $G$, without computing $G$.\\ \par 
\noindent\textbf{Goal of the paper.}
Given an invertible matrix $P(x)\in \mathrm{GL}_n(\mathbf{k})$,  the change of variable (``gauge transformation'') ${Y(x)=P(x).Z(x)}$ produces the linear differential system noted $Z'(x)=P(x)[\mathcal{A}(x)] \; Z(x)$, with ${P(x)[\mathcal{A}(x)]:=P^{-1}(x)\mathcal{A}(x)P(x)- P^{-1}(x)P'(x)}$. The differential system $[\mathcal{A}]$ is called \emph{reducible} if there exists a gauge transformation $P(x)$, such that $A(x):=P(x)[\mathcal{A}(x)]$ is of the form
$$
 A(x)=\left(\begin{array}{c|c}
	      A_1(x) & 0 \\\hline
	      S(x) & A_2(x) \\
	   \end{array}\right).
$$
There exist algorithms to test and realize this \emph{factorization}; they
may be found in    \cite{Si96a,Ba07a} for the completely reducible case, and in the appendix of \cite{CoWe04a}
(and references therein) for the general case. 
\\

Let $G$ denote the differential Galois group of $[A] : \;  Y'(x)=A(x)Y(x)$. 
Let $\mathfrak{g}$ be its Lie algebra. 
In this paper, we show how to compute $\mathfrak{g}$ by using the theory of reduced forms of linear differential systems.	  
Finding a \emph{reduced form} of $[A]$ amounts to finding a gauge transformation $P(x)$ 
(possibly over an algebraic extension $\mathbf{k_0}$ of $\mathbf{k}$) such that $P(x)[A(x)] \in \mathfrak{g}(\mathbf{k_0})$. 
This is similar to the Lie-Vessiot-Guldberg theories of reduction of connections in differential geometry 
(see \cite{BlMo10a,BlMo12a} for the latter and their connections with the Kolchin-Kovacic theory of reduced forms). 
Our contribution is to provide an algorithm to compute  such a reduction matrix $P(x)$ for a reducible system.
\par 
In \cite{BaClDiWe16a}, it is explained how to put a completely reducible block-diagonal system into reduced form. 
We will show that, to reduce $[\mathcal{A}]$, it is thus sufficient to be able to reduce $[A]$ under the assumption that the block diagonal differential system
$$ Y'(x) = A_{\mathrm{diag}}(x) Y(x), 
	   \quad \textrm{ with } \quad 
	A_{\mathrm{diag}}(x) = \left(\begin{array}{c|c} 
	      A_1(x) & 0 \\\hline
	      0 & A_2(x) \\
	   \end{array}\right), $$
is in reduced form. 
We, with A. Aparicio-Monforte, had solved this problem in  \cite{ApDrWe16a} in the special case when the Lie algebra of $A_{\mathrm{diag}}(x)$ is abelian. This was extended by Casale and the second author in \cite{CaWe15a} to families of $\mathrm{SL}_2$-systems.
In this paper, we treat the problem in the general case.  A review of this work with an emphasis on down-to-earth exposition, relations to questions of theoretical physics and examples can be found in \cite{dreyfus2021differential}.
\\ \par

\noindent\textbf{
General algorithms for computing differential Galois groups.} 
Using the classification of the algebraic subgroups of $\mathrm{SL}_{2}$, Kovacic gave an efficient algorithm for  computing liouvillian solutions, which in turn allows to essentially obtain the differential Galois group when $n=2$. This approach was systematized by Singer and Ulmer in \cite{SiUl93a,SiUl93b} and then \cite{SiUl97a}, notably  in the case $n=3$. Let us now describe general procedures that work for an arbitrary $n$. 
Compoint and Singer gave a decision procedure in \cite{CoSi99a} to compute the differential Galois group in the case of completely reducible (direct sums of irreducible) systems.
Berman and Singer gave an algorithm extending \cite{CoSi99a} for a large class of reducible systems  \cite{Be02a,BeSi99a}.
Using model theory, Hrushovski gave in \cite{Hr02a} the first general decision procedure computing the Galois group.  It  was  clarified and improved by Feng in \cite{Fe15a}, see also \cite{sun2018new}.  More recently, the paper  \cite{amzallag2018degree} introduces new ideas to further improve the bounds in Hrushovski's algorithm. 
 A symbolic-numeric algorithm was proposed by van der Hoeven in \cite{Ho07b}, based on the Schlesinger-Ramis density theorems. 
None of these general algorithms is currently implemented, either because their complexity is prohibitive (especially Hrushovski's algorithm) or because it is not yet known how to implement some of the required building blocks.
\\ \par

\noindent\textbf{General algorithms for computing reduced forms.} 
In the last decade, a  strategy has been developed to compute the Lie algebra, instead of the Galois group, by computing
a reduced form of the differential system. 
The Kolchin-Kovacic reduction theorems appear
in the Kovacic program on the inverse problem \cite{Ko69a,Ko71a} and in works of Kolchin on the logarithmic derivative \cite{Ko73a,Kol99a}.
Further studies of Lie-Kolchin reduction methods are carried out by D. Blazquez and J.-J. Morales in \cite{BlMo10a,BlMo12a}.
As a computation strategy, reduced forms are used in \cite{ApWe11a,ApWe12b,ApDrWe16a} (the strategy in \cite{NgPu10a} is also related to this approach). A characterization of reduced forms in terms of invariants is proposed in \cite{ApCoWe13a}; the latter paper also contains a decision procedure for putting the system into reduced form when the Galois group is reductive. A more elaborate, and much more efficient, algorithm is given in \cite{BaClDiWe16a} in the case of an absolutely irreducible system.
\\ \par

\noindent\textbf{Some motivations for reductions of reducible systems.}
In  papers on differential Galois theory, the case of a reducible system is sometimes brushed aside for two reasons. First, if one solves the irreducible diagonal blocks, then the full system can be solved by variation of constants. Second, a generic system is irreducible anyway so it may seem futile, at first glance, to spend energy on rare reducible systems.

Regarding the first objection, it would require to first solve irreducible systems,   and then construct a big Picard-Vessiot extension; variation of constants would then require the computation of integrals of transcendental functions. 
Namely, in the above notations, a fundamental matrix is 
\[ U =\left(	\begin{array}{c|c} 
			U_{1} & 0 \\\hline U_2 V  & U_{2}
		\end{array}\right), 
\quad \textrm{ with } \quad
 \left\{\begin{array}{ccl}
	U_i'  &=& A_i U_i , \\
	V' &=&  U_2^{-1} S U_1,
  \end{array}\right.
\]
where $S$ denotes the lower triangular block in $A$.
In contrast, the approach developed here uses essentially rational solutions of linear differential systems with coefficients in the base field; in return, it may actually be used to study properties of integrals of holonomic transcendental functions, see \cite{Be01a}.
Indeed, a reduced form gives us all algebraic relations between these integrals of holonomic functions; in particular, we will obtain a basis of transcendental integrals to express all the other ones.
\\

Regarding the second objection, it turns out that,  in many practical applications,
the differential systems or operators that occur happen to be reducible. Indeed we
next describe several examples of this.

The context which was our initial motivation is the Morales-Ramis-Sim\'o theory: it studies integrability properties of dynamical systems by studying successive differential systems, the variational equations, which can be viewed as a cascade of reducible systems. Algorithms to obtain reduced forms, and hence integrability criteria, for such systems are elaborated in \cite{ApWe11a,ApWe12b,ApDrWe16a}. 
For more general (non-integrable) non-linear differential systems, the Lie algebras of the differential Galois groups of variational equations give information on the Malgrange groupo\"{\i}d of the system. This is shown by Casale in  \cite{Ca09a} 
and developed in \cite{CaWe15a} to compute the Malgrange groupo\"{\i}d of (non-linear) second order differential equations. Once again, the fact that the variational equations are reducible systems turns out to be an important ingredient.

Reducible operators also appear very naturally in the holonomic world of  statistical mechanics or combinatorics, see
\cite{BoBoHaMaWeZe09a,BoBoHaHoMaWe11a} or the reference book \cite{Mc10a}. In this context, objects (or generating series) appear as convergent holonomic power series with integer coefficients; they are solutions of linear differential operators and their minimal operator is often reducible, see e.g. \cite{BoBoHaMaWeZe09a,BoBoHaHoMaWe11a}.

Last, we may also mention prolongations of systems which appear in works  on generic Galois groups (situations with mixed differential and $q$-difference structures), see \cite{DiHa10a} and references therein. These are also (structured) reducible linear differential systems and tools from this work may hence be used for a better understanding of generic or particular parametrized differential Galois groups. Similar prolongations also appear when studying singularly perturbed linear differential systems and studying solutions as series in the perturbation parameter, see e.g. the PhD of S. Maddah \cite{MadT} and references therein. The methods that we elaborate here may lead to simplification methods for such systems.\\ \par

\noindent\textbf{Structure of the paper. }  
The paper is organized as follows. In $\S \ref{secgalois}$, we recall some basic facts of differential Galois theory. We present the theory of reduced forms, notably the Kolchin Kovacic reduction theorem, which is the heart of our paper. In $\S \ref{sec:dec}$ we prove that the reduction matrix may be chosen to have a particular shape: it is a unipotent triangular matrix.
The action of such a gauge transformation on the matrix $A(x)$ of the system is governed by the adjoint action of the block-diagonal part of $A(x)$ on its off-diagonal parts.
The results of this first part of $\S \ref{sec:dec}$ are generalizations of \cite{ApDrWe16a}. Then, we recall  the construction of an isotypical flag, which will be adapted to the adjoint action in the reduction process. In $\S \ref{sec:exred}$ we give examples of the reduction process of $\S \ref{sec4}$.
We have chosen to take examples in increasing degrees of complexity in order to show step by step what the difficulties are. 
In
$\S \ref{sec4}$, we present the main contribution of the paper.
We explain how to put a block-triangular linear differential system into reduced form. 
Applying linear algebra and standard module-theoretic tools (isotypical decomposition, flags of indecomposable modules, etc.),
we generalize the techniques of \cite{ApDrWe16a} to this \emph{non-abelian} setting\footnote{The main difference between this  paper and \cite{ApDrWe16a} is that, in the previous paper, the Lie algebra of $A_{\mathrm{diag}}(x)$ was abelian. This had the consequence that the eigenvalues of the adjoint action belonged to $\mathbf{k}$ and a convenient Lie subalgebra of $A(x)$ admitted a basis of constant matrices in which the matrix associated to the adjoint action was in Jordan normal form. The reduction problem was then reduced to rational solutions of first order scalar linear differential equations. This is no longer true here.}.
The gauge transformation which reduces the system is then derived from the computation of rational solutions of successive linear differential systems with parametrized right-hand-side, see Theorem \ref{theoreme-reduction}. 
We believe that this part will generally be algorithmically efficient because it uses mostly linear algebra and rational solutions of linear differential systems of bounded size. We show this in several examples; see also the maple worksheet \cite{DrWe20a}.
In $\S \ref{sec:algo}$, we 
{present another contribution.} We 
show how the results of $\S \ref{sec4}$  may be combined with other results in order to have a general algorithm for reducing a general linear differential system. 
\par
The last two short sections are mostly expository and included for self-containedness.
In $\S \ref{sec:lie}$, we explain,  given a  system in reduced form, how to compute the Lie algebra $\glie$ of the differential Galois group. In  $\S \ref{sec7}$, we describe how, having computed 
the Galois-Lie algebra $\glie$ {of a reduced linear differential system}, one can recover its differential Galois group $G$ (using connectedness). The material in  $\S \ref{sec:lie}$ and  $\S \ref{sec7}$ is mostly known.\\ \par 

\noindent\textbf{Acknowledgments. }  
	We would like to thank G. Casale, R. Feng, and M.-F. Singer 
as well as M.-A Barkatou, T. Cluzeau and L. Di Vizio
for excellent conversations regarding the material presented here. 
We specially thank both referees for many clarifying comments and suggestions.

\pagebreak[3]
\section{Differential Galois Theory and Reduced Forms}\label{secgalois}

\subsection{The Base Field} \label{base-field}
Let us consider a differential field of characteristic zero $(\mathbf{k},\partial)$, i.e. a field equipped with a derivation. We will use the classical notation $c'$, for the derivative of $c\in \mathbf{k}$. We assume that 
its constant field $\CC:=\{c\in \mathbf{k}\mid c'=0\}$ is algebraically closed. 
We need to make assumptions about our base field $\mathbf{k}$ to elaborate our algorithms. 
\begin{itemize}
\item[\cercle{1}]
First we assume that $\mathbf{k}$ is an effective field,
i.e. that one can compute representatives of the four operations $+,-,\times,/$ and one can effectively test whether two elements of $\mathbf{k}$ are equal.  
\item[\cercle{2}]
We also assume that, given a homogeneous linear differential system ${[A]:\,  Y'(x)=A(x)Y(x)}$ with $A(x)\in \CM_n(\mathbf{k})$, we can effectively find a 
basis of its rational solutions, i.e. its solutions $Y(x)\in \mathbf{k}^n$.
\item[\cercle{3}]
Finally, we assume that, given a homogeneous linear differential system ${[A]:\,  Y'(x)=A(x)Y(x)}$ with $A(x)\in \CM_n(\mathbf{k})$, we can effectively find a 
basis of its exponential, also called hyperexponential,  solutions (see \cite{BaClElWe12a}).
\end{itemize}

The standard example of such a field would be $\mathbf{k}=\CC(x)$ with $\CC=\overline{\mathbb{Q}}$. When $\mathbf{k}=\CC(x)$, a fast algorithm for rational solutions of linear differential systems is given in \cite{Ba99a}. A Maple package \textsc{IntegrableConnections}, based on ISOLDE \cite{Isolde}, for this task is proposed in \cite{IntegrableConnections}. Algorithms for $\cercle{2}$ and $\cercle{3}$ and generalizations appear in  \cite{BaClElWe12a} (and references therein).
\begin{remark} Assumption \cercle{3} is used only in the factorization algorithm which is a preliminary step to our reduction method. The specific algorithm proposed in this paper only uses the rational algorithms of assumption \cercle{2} and  also \cercle{1}.
\end{remark}
Singer showed, in \cite{Si91a}, Lemma 3.5 and Theorem 4.1, that if $\mathbf{k}$ is an elementary extension
of $\CC(x)$ or if $\mathbf{k}$  is an algebraic extension of a purely transcendental Liouvillian extension
of $\CC(x)$, then $\mathbf{k}$ satisfies the above conditions and hence suits our purposes. 

To simplify the exposition, we will further assume that $\mathbf{k}$ is a $\mathcal{C}^{1}$-field\footnote{A field $\mathbf{k}$ is a $\mathcal{C}^{1}$-field when every non-constant homogeneous polynomial $P$ over $\mathbf{k}$ has a non-trivial zero provided that the number of its variables is more than its degree. For example, $\CC(x)$ is a $\mathcal{C}^{1}$-field and any algebraic extension of a $\mathcal{C}^{1}$-field is a $\mathcal{C}^{1}$-field (Tsen's theorem).}.

\subsection{Differential Galois Theory}\label{sec12}
We review classical elements of differential Galois theory. We refer to \cite{PS03} or \cite{CrHa11a,Si09a} for details and proofs.
 Let us consider a linear differential system of the form $[A]:\,  Y'(x)=A(x)Y(x)$, with $A(x)\in \CM_n(\mathbf{k})$. 
 A \emph{Picard-Vessiot extension} for $[A]$ is a differential field extension $K$ of $\mathbf{k}$, generated over $\mathbf{k}$ by the entries of a fundamental solution matrix of $[A]$ and such that the field of constants of $K$ is $\CC$. 
 The Picard-Vessiot extension $K$ exists and is unique up to differential field isomorphism. 
  \\ \par 
 The \emph{differential Galois group} $G$ of the system $[A]$ is the group of field automorphisms of the Picard-Vessiot extension $K$ which commute with the derivation and leave all elements of $\mathbf{k}$ invariant. 
Let $U(x)\in \mathrm{GL}_n(K)$ be a fundamental solution matrix of $ Y'(x)=A(x)Y(x)$ with coefficients in $K$. 
 For any $ \varphi\in G$, $\varphi(U(x))$ is also a fundamental solution matrix,
 so there exists a constant matrix
 $C_{\varphi} \in  \mathrm{GL}_n\left(\CC\right)$ such that $\varphi(U(x)) = U(x).C_{\varphi}$.
 The map $\rho_{U}:  \varphi \longmapsto C_{\varphi}$ 
is an injective group morphism. 
The group $G$, identified with $\hbox{Im } \rho_{U}$, may be viewed as a linear algebraic subgroup of~$\mathrm{GL}_n\left(\CC\right)$. 
\\

The \emph{Lie algebra} $\mathfrak{g}$ of the linear algebraic group $G\subset \mathrm{GL}_n\left(\CC\right)$
is the tangent space to $G$ at the identity. Equivalently, it is the set of matrices $N\in \CM_n(\CC)$ such that $\id_n + \varepsilon N$ 
satisfies the defining equations of the algebraic group $G$ modulo $\varepsilon^2$.
The Lie algebra $\mathfrak{g}$ of the differential Galois group is referred to as the \emph{Galois-Lie} algebra of the differential system $[A]$. 
The dimension of the Lie algebra $\mathfrak{g}$, as a vector space, is the transcendence degree of a Picard-Vessiot extension. 
Consequently, if we are able to compute the dimension of the Galois-Lie algebra, it will help us to prove results of algebraic independence among solutions of a linear differential system. 
The following example illustrates this by showing how our techniques allow to prove or disprove algebraic dependence of integrals of $D$-finite functions.

\begin{example}
Let $A_1:= \left( \begin {array}{cc} 0&1\\  {\frac {3\,{x}^{2}-
6\,x+7}{144\,x \left( x-1 \right) ^{2}}}&-\frac{2}{3x}-\frac{2}{3\left( x
-1 \right)}\end {array} \right)
.$
A basis of solutions of the equation associated to $[A_1]$ is given by Heun functions $f_1(x),f_2(x)$. The Kovacic algorithm, see \cite{Kov86,HoWe05a}, shows that the differential Galois group is a finite extension of $\mathrm{SL}_{2}(\CC)$; so the Galois Lie algebra has dimension  $3$. 
The system $[A]$ given by
\[ 
A=  \left( \begin {array}{ccc} 0&1&0\\  {\frac {3\,{x}^{
2}-6\,x+7}{144\, \left( x-1 \right) ^{3}{x}^{2}}}&-\frac{2}{3x}-\frac{2}{3\left( x
-1 \right)}&0\\  1&0&0\end {array}
 \right)   
\]
has fundamental  solution matrix
\[
 \left(\begin{matrix} 
       f_1(x) & f_2(x) & 0 \\
        f_1'(x) & f_2'(x) & 0 \\
        \int^x f_1(t) {\rm dt} & \int^x f_2(t) {\rm dt} & 1
    \end{matrix} \right).
  \]
 One can show, for example with the techniques of this paper, that the Galois-Lie algebra
of $[A]$ has dimension $5$. It follows that the $\int f_i(t) dt$ are transcendental
and algebraically independent over $\CC(x)(f_1,f_2,f_1',f_2')$.
\\
However, suppose we had started from 
\[ A_1 = \left( \begin {array}{cc} 0&1\\  \frac{1}{36}\,{\frac {1}{x
 \left( x-1 \right) }}&-{\frac {7}{12\,x}}-\frac{1}{6\left( x-1 \right)}\,  \end {array} \right).
\]
Its differential Galois group is also a finite extension of the group $\mathrm{SL}_{2}(\CC)$. The maple implementation of 
\cite{HoWe05a} gives us two hypergeometric 
solutions ${f_1(x) = {}_2F{}_1([-1/3, 1/12], [7/12])(x)}$ and $f_2(x) = x^{5/12}{ }_2F{}_1([1/12, 1/2], [17/12])(x).$
The Galois-Lie algebra of $[A]$, { with $A=  \left( \begin {array}{ccc} 0&1&0\\  \frac{1}{36}\,{\frac {1}{x
 \left( x-1 \right) }}&-{\frac {7}{12\,x}}-\frac{1}{6\left( x-1 \right)}&0\\  1&0&0\end {array}
 \right) $,} turns out to have dimension $3$ and 
reduction techniques applied to $[A]$ give us the relations 
\[
\int^x \!f_i \left( t \right) \,{\rm dt}=
	- \frac {9}{11}\,x \left( x-1 \right) f_i'\left( x \right) 
	+ \frac{15}{44} \left( 3 \, x -1 \right)  f_i \left( x \right) 
	+\frac {9}{11}\,{c_i }
\]
satisfied by the $f_i$, for some constants $c_i$.
\end{example}
For a factorized reducible system
 $[A]$ of the form \[A(x) = \left(\begin{array}{c|c} A_{1}(x) & 0 \\\hline S(x) & A_{2}(x)\end{array}\right)
 = A_{diag}(x) + A_{sub}(x), \]
we have a fundamental solution matrix of the form
\[ U =\left(\begin{array}{c|c} U_{1} & 0 \\\hline U_2 V & U_{2}\end{array}\right) 
	= \left(\begin{array}{c|c} U_{1} & 0 \\\hline 0 & U_{2}\end{array}\right) \left(\begin{array}{c|c} \mathrm{Id}_{\mathrm{n_{1}}} & 0 \\\hline  V & \mathrm{Id}_{\mathrm{n_{2}}}\end{array}\right). 
\]
Once $U_1$ and $U_2$ are known, $V$ is given by integrals : $V'=U_2^{-1} S U_1$. 
Let $K_{diag}:=\mathbf{k}(U_1,U_2)$ be a Picard-Vessiot extension of $\mathbf{k}$ for $[A_{diag}]$, with differential Galois group $G_{diag}$.
Then $K:=K_{diag}(V)$ is a Picard-Vessiot extension of $\mathbf{k}$ for $[A]$ with differential Galois group $G$. Note that $K_{diag}$ has field of constants $\mathcal{C}$, so that we may consider the differential Galois group over $K_{diag}$. Letting $G_{u}:=Gal(K/K_{diag})$ be the differential Galois group of $[A]$
 over $K_{diag}$, we have by Galois correspondence, see Proposition 1.34 of \cite{PS03}, that $G_u$ is the set of elements of $G$ of the form 
$\left(\begin{array}{c|c} \mathrm{Id}_{\mathrm{n_{1}}} & 0 \\\hline G_{2,1} & \mathrm{Id}_{\mathrm{n_{2}}}\end{array}\right)$ as the $U_i$ are fixed. 
 Then,  $G_u\triangleleft G$ and
 \[  \overbrace{\mathbf{k} \hspace{0.2cm}\subset\hspace{0.2cm} K_{diag}}^{G_{diag} \simeq G/G_{u}}\hspace{-0.0cm}:= 
	\overbrace{\mathbf{k}(U_1,U_2) \hspace{0.2cm}\subset\hspace{0.2cm} K}^{G_{u}\triangleleft G}:=K_{diag}(V).
\]
\begin{remark}\label{quotient}
Given $g\in G$, we have $g(U_i)=U_i.C_i$ for invertible constant matrices $C_i$.  
Then $\left. G_{diag} \simeq \left\{\left(\begin{array}{c|c} C_{1} & 0 \\\hline 0 & C_2\end{array}\right) 
	\; \right| \; \exists g\in G, g(U_i)=U_i.C_i \textrm{ for } i=1,2 \right\}. $
In other words, for any matrix $\left(\begin{array}{c|c} C_{1} & 0 \\\hline 0 & C_2\end{array}\right) \in G_{diag}$, there exists a matrix $M\in G$ with $M=\left(\begin{array}{c|c} C_{1} & 0 \\\hline C_{2,1} & C_2\end{array}\right)$.
\end{remark}
We define $C_g:=C_2 g(V)-V C_1$.  In virtue of   
$$g(V')=g(U_2^{-1} S U_1)=C_2^{-1} U_2^{-1} S U_1 C_1=C_2^{-1} V' C_1,$$
we find that $C_g$  is a constant matrix. Since  $ g(V)=C_2^{-1}V C_1+C_2^{-1} C_g$,
 we see that  
$g(U_2V) = U_2 V C_1 + U_2 C_g $ and 
\[ g(U) = \left(\begin{array}{c|c} U_{1} & 0 \\\hline U_2 V & U_{2}\end{array}\right) \cdot
	\left(\begin{array}{c|c} C_{1} & 0 \\\hline C_g & C_2\end{array}\right)  
	=  U\cdot 
	\left(\begin{array}{c|c} C_{1} & 0 \\\hline 0 & C_2\end{array}\right)
	\left(\begin{array}{c|c} \mathrm{Id}_{\mathrm{n_{1}}} & 0 \\\hline C_2^{-1}\cdot C_g & \mathrm{Id}_{\mathrm{n_{2}}} \end{array}\right).
\]
 {Last, we recall a useful lemma to switch from group to Lie algebra in specific cases:}
\begin{lemma}\label{lem7}
Let $\mathfrak{n}\subset \CM_n\left(\CC\right)$ be a $\CC$-vector space of 
lower triangular matrices with zero entries on the diagonal.
Assume that, for all $N,N'\in \mathfrak{n}$, $N\cdot N'=(0)$. 
Then,  ${U:=\Big\{\id_n+N, N\in  \mathfrak{n}\Big\}}$ is a connected algebraic group
and $\mathfrak{n}$ is its Lie algebra.\par 
Furthermore, we have two bijective maps which are inverses of each other
$$\begin{array}{cccc}
\exp : &  \mathfrak{n} & \longrightarrow & U \\
& N&\mapsto  &\id_n+N \\
 \log : &  U & \longrightarrow & \mathfrak{n} \\
 &\id_n+N&\mapsto  &N.
\end{array}$$
\end{lemma}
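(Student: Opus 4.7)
The plan is to exploit the hypothesis $N\cdot N' = 0$ to collapse multiplication in $U$ to addition in $\mathfrak{n}$, then read off all four assertions (group, algebraic, connected, and Lie algebra $=\mathfrak{n}$) from this identification.

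First I would compute the product of two elements of $U$: for any $N,N'\in\mathfrak{n}$, the assumption $N\cdot N' = 0$ gives
\[
(\id_n+N)(\id_n+N') = \id_n + N + N' + N\cdot N' = \id_n + (N+N').
\]
Since $\mathfrak{n}$ is a $\CC$-vector space, $N+N'\in\mathfrak{n}$, so $U$ is closed under multiplication. The identity is $\id_n$ (take $N=0\in\mathfrak{n}$) and $(\id_n+N)(\id_n-N)=\id_n-N^2=\id_n$, so $(\id_n+N)^{-1}=\id_n-N\in U$. This shows $U$ is a subgroup of $\mathrm{GL}_n(\CC)$, and in fact the map $N\mapsto \id_n+N$ is a group isomorphism from $(\mathfrak{n},+)$ onto $U$.

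Next I would verify that $U$ is a Zariski-closed (hence algebraic) subgroup of $\mathrm{GL}_n(\CC)$. Because $\mathfrak{n}$ is a linear subspace of $\CM_n(\CC)$, it is cut out by a finite system of linear equations in the matrix entries; translating these equations by $\id_n$ shows that $U=\id_n+\mathfrak{n}$ is a closed affine-linear subvariety of $\CM_n(\CC)$, contained in $\mathrm{GL}_n(\CC)$ since every element is unipotent (hence has determinant $1$). The parametrisation $\mathfrak{n}\to U$, $N\mapsto \id_n+N$, is an isomorphism of affine varieties with the affine space $\mathfrak{n}\cong\CC^{\dim\mathfrak{n}}$; in particular $U$ is irreducible, and therefore connected.

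For the Lie algebra, I would apply directly the definition recalled earlier in the paper: $M\in\liealg(U)$ iff $\id_n+\varepsilon M$ satisfies the defining equations of $U$ modulo $\varepsilon^2$. Those defining equations are the linear equations cutting out $\id_n+\mathfrak{n}$, so the condition reduces, already modulo $\varepsilon$ (and a fortiori modulo $\varepsilon^2$), to $M\in\mathfrak{n}$. Hence $\liealg(U)=\mathfrak{n}$. Finally, the bijectivity of the stated $\exp$ and $\log$ is immediate from their definitions, and they are in fact the usual matrix exponential and logarithm truncated after the linear term: since $N\cdot N'=0$ for all $N,N'\in\mathfrak{n}$ we have $N^k=0$ for $k\ge 2$, so the power series collapse to $\exp(N)=\id_n+N$ and $\log(\id_n+N)=N$.

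There is essentially no hard step: the whole content of the lemma is packed in the identity $(\id_n+N)(\id_n+N')=\id_n+(N+N')$. The only point requiring a little care is not to confuse the abstract statement ``$U$ is algebraic with Lie algebra $\mathfrak{n}$'' with a general matrix-exponential statement; here everything is linear because the condition $N\cdot N'=0$ forces the nonlinear terms in both the group law and the exponential series to vanish.
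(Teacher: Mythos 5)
Your proof is correct and follows the same route as the paper's: both rest on the single observation that $N\cdot N'=0$ collapses the group law to $(\id_n+N)(\id_n+N')=\id_n+(N+N')$ and truncates the exponential and logarithm series after the linear term, so that $U$ is the affine subspace $\id_n+\mathfrak{n}$, an abelian connected algebraic group with Lie algebra $\mathfrak{n}$. You simply spell out the details (closure under product and inverse, Zariski-closedness via the linear equations defining $\mathfrak{n}$, and the $\varepsilon$-computation of the Lie algebra) that the paper's terser proof leaves to the reader.
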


\begin{proof}
The algebraic group $U$ is abelian thanks to the assumption; the fact that $\mathfrak{n}$ is
its Lie algebra is easily derived from the definition.
Let  $N\in \mathfrak{n}$. 
We have $\exp(N) = \id_n + N$ because, by assumption, $N^2=0$.
The same argument shows that ${\log(\id_n + N)=N}$. 
It follows that $\exp$ and $\log$ are bijective on the required sets and 
are inverses of each other.  This also proves the connectedness of $U$.
\end{proof}

\subsection{Reduced Forms of Linear Differential Systems} \label{section-reduced}
Let  $A(x)\in \CM_n\left(\mathbf{k}\right)$, $G$ be the differential Galois group of $[A]: \,  Y'(x)=A(x)Y(x)$
and $\mathfrak{g}$ its Lie algebra. As usual, the notation $\mathfrak{g}(\mathbf{k})$ stands for the extension of scalars
$\mathfrak{g}(\mathbf{k})=\mathfrak{g}\otimes_{\CC} \mathbf{k}$. Let $\overline{\mathbf{k}}$ be the algebraic closure of $\mathbf{k}$.

\begin{definition}\label{gauge-def}
Let us consider ${A(x),B(x)\in \CM_n(\mathbf{k})}$. 
The two linear differential systems~${[A]:\, Y'(x)=A(x)Y(x)}$ and~$[B]:\, Z'(x)=B(x)Z(x)$ are called \emph{equivalent over~$\mathbf{k}$} (or \emph{gauge equivalent} over~$\mathbf{k}$) 
when there exists~$P(x)\in \mathrm{GL}_n(\mathbf{k})$
such that 
\[ B(x)=P^{-1}(x)A(x)P(x) - P^{-1}(x)P'(x).\]
The notation is $B=P[A]$ and $P$ is called a \emph{gauge transformation matrix}.
\end{definition}
Solutions of $[A]$ and $[B]$ are then linked by the relation $Y(x)=P(x)Z(x)$.

\begin{definition}
{Let ${A(x)\in \CM_n(\mathbf{k})}$. }
We say that the system $[A]: \, Y'(x)=A(x)Y(x)$ is \emph{in reduced form} (or in \emph{Kolchin-Kovacic reduced form})
{when} $A(x) \in \mathfrak{g}(\mathbf{k})$. 
\par
Otherwise, we say that a matrix 
	{ $B(x)\in  \CM_n\left(\overline{\mathbf{k}}\right)$ (resp. a system $[B]$) is a  \emph{reduced form of $[A]$}  when there exists 
	$P(x) \in \mathrm{GL}_n(\overline{\mathbf{k}})$ }
	such that $B(x)=P(x)[A(x)]$ and $B(x)$ is in reduced form, i.e.
	$B(x) \in \mathfrak{g}(\overline{\mathbf{k}})$.
\end{definition}

The existence and relevance of reduced forms are given by the following  Kolchin-Kovacic reduction result.
A proof can be found in \cite{PS03}, Proposition 1.31 and Corollary 1.32. See also \cite{BlMo10a}, Theorem~5.8, and \cite{ApCoWe13a}, {\S}~5.3 after Remark~31.

\begin{proposition}[Kolchin-Kovacic reduction theorem]\label{propo1}
Let $A(x)\in \CM_n(\mathbf{k})$. 
Let $G$ be the differential Galois group of the differential system $[A]:\, Y'(x)=A(x)Y(x)$ and $\mathfrak{g}$ be the Lie algebra of $G$.
Let $H\subset \mathrm{GL}_n\left(\CC\right)$ be a connected linear algebraic group, with Lie algebra $\mathfrak{h}$, such that 
	$A(x) \in \mathfrak{h}(\mathbf{k})$.
\begin{enumerate}
\item The Galois group $G$ is contained in (a conjugate of) $H$.
\item There exists a gauge transformation $P(x)\in H(\overline{\mathbf{k}})$ such that $P(x)[A(x)]\in \mathfrak{g}(\overline{\mathbf{k}})$. \\
If we further assume that $G$ is connected and that 
$\mathbf{k}$ is a $\mathcal{C}^{1}$-field,
then there exists a gauge transformation $P(x)\in H(\mathbf{k})$ such that $P(x)[A(x)]\in \mathfrak{g}(\mathbf{k})$.
\end{enumerate}
\end{proposition}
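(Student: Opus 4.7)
The plan is to translate everything into the language of the Picard-Vessiot torsor. Let $X=(X_{ij})$ denote generic matrix entries and equip $\mathbf{k}[GL_n]:=\mathbf{k}[X_{ij},\det(X)^{-1}]$ with the derivation $X'=AX$ extending $\partial$. A Picard-Vessiot ring for $[A]$ is a quotient $R:=\mathbf{k}[GL_n]/\mathfrak{m}$ by some maximal differential ideal, and $V:=\mathrm{Spec}(R)$ embeds as a right $G$-torsor inside $GL_{n,\mathbf{k}}$. The two claims of the proposition correspond, respectively, to confining $V$ inside $H$ and to exhibiting a rational point of $V$.

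For (1), the key observation is that the hypothesis $A\in\mathfrak{h}(\mathbf{k})$ is equivalent to saying that the defining ideal $I_H\subset\mathbf{k}[GL_n]$ of the subgroup $H$ is stable under the derivation $X'=AX$. One way I would verify this is to identify the derivation with the left-invariant vector field on $GL_n$ attached to $A$: since $\mathfrak{h}=\liealg(H)$ is precisely the set of left-invariant vector fields tangent to $H$, the ideal $I_H$ is preserved iff $A\in\mathfrak{h}$. Alternatively, one checks stability directly on a set of generators of $I_H$. Once this is established, the derivation descends to $\mathbf{k}[H]=\mathbf{k}[GL_n]/I_H$, so one may choose $\mathfrak{m}\supset I_H$ and the torsor $V$ sits inside $H_\mathbf{k}$. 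The right $G$-action on $V$ then forces $G\subset H$; a different choice of $\mathfrak{m}$ produces an $H$-conjugate.

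For (2), I would pick any $Q\in V(\overline{\mathbf{k}})\subset H(\overline{\mathbf{k}})$. Such a point exists because $V$ is a non-empty finite-type $\mathbf{k}$-scheme (it becomes the trivial torsor $G_K$ over the Picard-Vessiot extension $K$, hence is geometrically integral). Writing $U$ for the universal fundamental matrix, I use the torsor structure to factor $U=Q\,\tilde U$ with $\tilde U\in G(K)$. Setting $P:=Q^{-1}\in H(\overline{\mathbf{k}})$, a short computation gives $P[A]=P'P^{-1}+PAP^{-1}=\tilde U'\tilde U^{-1}$, and since $\tilde U$ takes values in $G$ its right-logarithmic derivative lies in $\mathfrak{g}$. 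Because $P[A]$ also has entries in $\overline{\mathbf{k}}$, we deduce $P[A]\in\mathfrak{g}(\overline{\mathbf{k}})$. For the sharper statement, when $G$ is connected and $\mathbf{k}$ is a $\mathcal{C}^{1}$-field, I would invoke Steinberg's theorem: the vanishing $H^{1}(\mathbf{k},G)=0$ ensures that $V$ is already the trivial $G$-torsor over $\mathbf{k}$, so $V(\mathbf{k})\neq\emptyset$ and one may take $Q\in V(\mathbf{k})\subset H(\mathbf{k})$.

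The main obstacle I anticipate is the rigorous verification of the $\partial$-stability of $I_H$ under the hypothesis $A\in\mathfrak{h}$: this is the precise bridge between the Lie-theoretic condition on $A$ and the algebro-geometric condition on the torsor, and it is what ultimately permits the descent of the derivation to $\mathbf{k}[H]$. The second delicate point is the appeal to Steinberg's theorem on the triviality of torsors under connected groups over $\mathcal{C}^{1}$-fields, which is where the connectedness hypothesis is essential and without which the gauge transformation can only be guaranteed over~$\overline{\mathbf{k}}$.
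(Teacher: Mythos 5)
The paper offers no proof of this proposition, referring instead to \cite{PS03}, Proposition 1.31 and Corollary 1.32, and your argument is precisely that standard torsor-theoretic proof: $A\in\mathfrak{h}(\mathbf{k})$ makes $I_H$ a differential ideal (note that the vector field $X\mapsto AX$ is invariant under \emph{right} translations $X\mapsto XC$, which is what makes ``tangent to $H$ iff $A\in\mathfrak{h}$'' work), the maximal differential ideal containing $I_H$ confines the Picard--Vessiot torsor $V$ inside $H_{\mathbf{k}}$ and forces $G\subset H$, a point $Q\in V(\overline{\mathbf{k}})$ yields $P[A]=\tilde U'\tilde U^{-1}\in\mathfrak{g}(\overline{\mathbf{k}})$ for $P=Q^{-1}$, and Steinberg's vanishing of $H^{1}$ over $\mathcal{C}^{1}$-fields supplies the $\mathbf{k}$-point in the connected case. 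Your proposal is correct and essentially identical to the cited proof.
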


We now construct a Lie algebra $\mathfrak{h}$ such that $\mathfrak{h}$ is the Lie algebra of some algebraic group $H$, 
	$A(x) \in \mathfrak{h}(\mathbf{k})$ and $\mathfrak{h}$ has minimal dimension for that property.

Following \cite{WeNo63a,ApCoWe13a}, a  \emph{Wei-Norman decomposition} of $A(x)$ is a finite sum of the form
$$A(x)=\sum a_{i}(x)M_{i},$$
where the
matrices $M_{i}$ have
 coefficients in $\CC$ and the $a_{i}(x)\in \mathbf{k}$ form a basis of the $\CC$-vector space spanned by the entries of $A(x)$. 
The $M_{i}$ depend on the choice of $a_{i}(x)$ but the $\CC$-vector space generated by the $M_{i}$ is independent of the choice of the $a_{i}(x)$. This shows that the 
notation $\liealg(A)$ below does not depend upon the choice of the Wei-Norman decomposition and is well defined.

Recall that a Lie algebra is called an \emph{algebraic Lie algebra} when it is the Lie algebra of an algebraic group.
\begin{definition}  
For a matrix $A(x)\in \CM_n(\mathbf{k})$, 
let constant matrices $M_{i}$ denote the generators of a Wei-Norman decomposition of $A(x)$. 
\\
We define $\liealg(A)$, 
called \emph{the Lie algebra associated to $A$},  
as the smallest algebraic Lie algebra which contains all matrices $M_i$, 
i.e. the algebraic envelope of the Lie algebra generated by the $M_i$.
\end{definition}

The link between $\glie$ and $\liealg (A)$ is made in the following remark.
\begin{remark}\label{rem3}
{By Proposition \ref{propo1}, $\mathfrak{g}\subset \liealg (A)$. We see that the system $Y'(x)=A(x)Y(x)$ is in reduced form if and only if $\mathfrak{g}=\liealg(A)$.}
\end{remark}

The approach that we elaborate in this paper was initiated in \cite{ApWe11a,ApDrWe16a} and \cite{CaWe15a}. It is based on a criterion for reduced forms,  which is given in the following lemma.
  
  \begin{lemma}[\cite{ApDrWe16a}, Lemma 1.3] \label{reduite-minimale}
Given $A(x)\in \CM_n\left(\mathbf{k}\right)$, let $G$ be the differential Galois group of the system $[A]:\,  Y'(x)=A(x)Y(x)$
and $\mathfrak{g}$ be its Lie algebra. Let $H$ be the connected linear algebraic group whose Lie algebra is $\liealg(A)$.
Assume that $G$ is connected. 
\\
The system $[A]$ is in reduced form, i.e. $G=H$ and $\mathfrak{g}=\liealg(A)$,
if and only if, for all gauge transformation matrices $P(x)$ in $H(\mathbf{k})$, 
we have $\liealg(A) = \liealg(P[A])$.
  \end{lemma}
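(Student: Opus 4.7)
The plan is to prove the two implications separately, each being a short application of Proposition \ref{propo1} combined with the minimality built into the definition of $\liealg(\cdot)$. Throughout, one uses the standard fact that a gauge transformation $P(x)$ taking values in an algebraic group $H$ preserves membership in its Lie algebra: if $A(x)\in\mathfrak{h}(\mathbf{k})$ with $\mathfrak{h}=\liealg(H)$, then $P[A]\in\mathfrak{h}(\mathbf{k})$ as well. Indeed, conjugation by $P(x)$ stabilizes $\mathfrak{h}$, and the logarithmic derivative $P'(x)P^{-1}(x)$ lies in $\mathfrak{h}(\mathbf{k})$ because $P(x)\in H(\mathbf{k})$ means its tangent vector along $\mathbf{k}$ lies in $\mathfrak{h}$.

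For the direction $(\Rightarrow)$, I assume $\mathfrak{g}=\liealg(A)$ and pick any $P(x)\in H(\mathbf{k})$. Since $H$ has Lie algebra $\liealg(A)$ and $A(x)\in \liealg(A)(\mathbf{k})$, the remark above gives $P[A]\in \liealg(A)(\mathbf{k})$, hence by minimality $\liealg(P[A])\subset \liealg(A)$. For the reverse inclusion, a gauge transformation over $\mathbf{k}$ does not change the Picard-Vessiot extension, so the systems $[A]$ and $[P[A]]$ share the same Galois group $G$ and same Galois-Lie algebra $\mathfrak{g}$. Applying Proposition \ref{propo1}(1) to the system $P[A]$ with the ambient group being (a connected representative of) $\liealg(P[A])$ yields $\mathfrak{g}\subset \liealg(P[A])$; combined with the hypothesis $\mathfrak{g}=\liealg(A)$, one obtains $\liealg(A)\subset \liealg(P[A])$, so equality.

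For $(\Leftarrow)$, I assume $\liealg(P[A])=\liealg(A)$ for every $P(x)\in H(\mathbf{k})$ and seek to prove $\mathfrak{g}=\liealg(A)$. The inclusion $\mathfrak{g}\subset \liealg(A)$ is the content of Proposition \ref{propo1}(1) applied with $H$ (whose Lie algebra is $\liealg(A)$) as the ambient group. For the nontrivial inclusion, I invoke Proposition \ref{propo1}(2): since $G$ is connected and $\mathbf{k}$ is a $\mathcal{C}^{1}$-field, there exists $P_0(x)\in H(\mathbf{k})$ with $P_0[A]\in\mathfrak{g}(\mathbf{k})$. Because $\mathfrak{g}$ is an algebraic Lie algebra containing every constant matrix in a Wei-Norman decomposition of $P_0[A]$, the minimality of $\liealg$ forces $\liealg(P_0[A])\subset\mathfrak{g}$. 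By hypothesis, $\liealg(P_0[A])=\liealg(A)$, so $\liealg(A)\subset\mathfrak{g}$, giving equality.

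The only delicate point is the preliminary fact $P[A]\in\mathfrak{h}(\mathbf{k})$ whenever $P\in H(\mathbf{k})$ and $A\in\mathfrak{h}(\mathbf{k})$: aside from that, both directions are one-line consequences of Proposition \ref{propo1}. The forward direction uses part (1) of that proposition and no connectedness hypothesis, while the backward direction relies crucially on part (2), which is where the assumptions that $G$ is connected and $\mathbf{k}$ is $\mathcal{C}^{1}$ enter.
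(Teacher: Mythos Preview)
The paper does not give its own proof of this lemma; it is quoted from \cite{ApDrWe16a} and used as a black box. Your argument is correct and is the standard one: both directions are immediate consequences of the Kolchin--Kovacic reduction theorem (Proposition~\ref{propo1}), together with the invariance of $\mathfrak{h}(\mathbf{k})$ under gauge transformations in $H(\mathbf{k})$ and the invariance of the Galois group under gauge transformations over~$\mathbf{k}$. Your identification of where the connectedness and $\mathcal{C}^1$ hypotheses enter (only in the backward direction, via Proposition~\ref{propo1}(2)) is also accurate.
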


\section{Decomposition and Flags for the Off-Diagonal Part}\label{sec:dec}

Let us consider a matrix 
$$ A(x) := \left(\begin{array}{c|c} A_{1}(x) & 0 \\\hline S(x) & A_{2}(x) \\\end{array}\right)\in \CM_n(\mathbf{k})$$
where $A_i(x)$ are square matrices in $\CM_{n_{i}}(\mathbf{k})$.
We have $A(x)= \adiag (x)+ \asub (x)$,  where  
	$$\adiag (x):=\left(\begin{array}{c|c} A_{1}(x)  & 0 \\\hline 0 & A_{2}(x) \end{array}\right)
		\quad \mathrm{ and } \quad  
	{\asub (x):=\left(\begin{array}{c|c} 0 & 0 \\\hline S(x)& 0\end{array}\right).}$$

Let us assume that $ Y'(x)=\adiag(x)Y(x)$ is in reduced form.
The aim of the paper is to show how to then put the full system $Y' (x)=A(x)Y(x)$ in reduced form, see $\S \ref{sec4}$. We are going to see in $\S\ref{sec:algo}$ that solving  this problem will give us a complete algorithm to put a general system into reduced form.

\subsection{The {Off-Diagonal} Algebra $\glsub$}\label{sec31}

Let $\gdiag:= \liealg(\adiag)$ be the Lie algebra associated to $\adiag(x)$. 
Let $\glsub:=\left\{ \left(\begin{array}{c|c} 0 & 0 \\\hline C  & 0\end{array}\right), C\in 
 \CM_{n_2\times n_1}(\CC)\right\} $
denote the space of off-diagonal constant matrices. 

    We now list some useful simple properties of $\glsub$.
\begin{lemma}\label{diagsub}
Let us consider a block-diagonal matrix $M = \left(\begin{array}{c|c} N_{1}  & 0 \\ \hline 0 & N_{2} \end{array}\right)\in\gdiag$ 
and off-diagonal matrices 
	$B_1 =\left(\begin{array}{c|c} 0 & 0 \\\hline C_1  & 0\end{array}\right)$ and 
	$B_2=\left(\begin{array}{c|c} 0 & 0 \\\hline C_2  & 0\end{array}\right)\in \glsub $.
\begin{enumerate}
\item 
{$B_1 . B_2=(0)$ and $\glsub$ is an abelian Lie algebra.
}
\item { $M.B_1 = \left(\begin{array}{c|c} 0 & 0 \\\hline N_{2} C_1 &0  \end{array}\right)\in \glsub$
and $B_1 .M= \left(\begin{array}{c|c} 0 & 0 \\\hline C_1 N_{1}  &0  \end{array}\right) \in \glsub$.
}
\item 
{ $[M,B_1] = \left(\begin{array}{c|c} 0 & 0 \\\hline N_{2} C_1 - C_1  N_{1}  & 0\end{array}\right) \in \glsub$.}
\end{enumerate}
\end{lemma}
\begin{proof} 
This is a simple calculation. 
\end{proof}
As a consequence of the third point we obtain the following lemma.
\begin{lemma}\label{lem10}
The Lie algebra  $\glsub$  is stable under $[\gdiag,\bullet]$.
\end{lemma}

\begin{remark} \label{rem:alg lie alg} 
Lemma \ref{lem7} applied to $\glsub$ gives that $\Big\{\id_n+B, B\in \glsub \Big\}$ is a connected algebraic group with Lie algebra $\glsub$.
More generally, given a vector subspace $W$ of $\glsub$, for every $M,N\in W$, we have $MN\in (0)$. Then $W$ is an algebraic abelian Lie algebra with additive abelian group $\{ \id_n + B, B\in W \}$.
\end{remark}

\pagebreak[3]
\subsection{The Shape of the Reduction Matrix}\label{sec32}

The aim of this subsection is to generalize Theorem 3.3 of \cite{ApDrWe16a} to our context. 
As above, we consider a system $[A]$ given by
$$A(x)=\adiag(x)+\asub(x),$$  with  
$$	\adiag (x):=\left(\begin{array}{c|c} A_{1}(x)  & 0 \\\hline 0 & A_{2}(x) \end{array}\right)
	\textrm{ and } 
	\asub (x):=\left(\begin{array}{c|c} 0  & 0 \\\hline S(x) & 0  \end{array}\right).
$$
We assume in the sequel that \emph{$[\adiag]$ is in reduced form}.

\begin{theorem}\label{theo1}
There exists a gauge transformation $$P(x)\in\Big\{\id_n+B(x), B(x)\in \glsub \left(\mathbf{k}\right)\Big\},$$ such that ${ Y'(x) = P(x)[A(x)]Y(x)}$ is in reduced form.
\end{theorem}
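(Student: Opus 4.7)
The strategy, adapted from \cite{ApDrWe16a}, Theorem~3.3, is to invoke the Kolchin-Kovacic reduction theorem and then exploit the semi-direct product structure of $H$. First, since $A(x)\in\hlie(\mathbf{k})$ and $H$ is the connected algebraic group with Lie algebra $\hlie$ (Lemma~\ref{lem6}), Proposition~\ref{propo1} provides a gauge transformation $Q(x)\in H(\mathbf{k})$ such that $Q[A]\in\mathfrak{g}(\mathbf{k})$. Because $\hsub$ is an ideal of $\hlie$ (Lemma~\ref{lem5}), the subgroup $H_{\mathrm{sub}}$ is normal in $H$ and the semi-direct decomposition $H = H_{\mathrm{diag}}\ltimes H_{\mathrm{sub}}$ holds, so $Q$ factors uniquely as $Q(x)=Q_{\mathrm{diag}}(x)\cdot P(x)$ with $Q_{\mathrm{diag}}\in H_{\mathrm{diag}}(\mathbf{k})$ and $P(x)=\id_n+B(x)$ for some $B(x)\in\hsub(\mathbf{k})$. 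The remaining task is to show that this specific $P$ already reduces $[A]$, i.e. that $P[A]\in\mathfrak{g}(\mathbf{k})$.

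A direct calculation based on Lemma~\ref{diagsub} and $B^{2}=0$ gives
\[
P[A] \;=\; A_{\mathrm{diag}} + A_{\mathrm{sub}} + [B,A_{\mathrm{diag}}] + B' \;=\; A_{\mathrm{diag}} + Z_{\mathrm{sub}}, \qquad Z_{\mathrm{sub}}\in\hsub(\mathbf{k}),
\]
so the diagonal block is preserved. Applying $Q_{\mathrm{diag}}$ then yields
$Q[A] = Q_{\mathrm{diag}}[A_{\mathrm{diag}}] + Q_{\mathrm{diag}}Z_{\mathrm{sub}}Q_{\mathrm{diag}}^{-1}$,
since the derivative term $Q_{\mathrm{diag}}'Q_{\mathrm{diag}}^{-1}$ is block-diagonal and conjugation by $Q_{\mathrm{diag}}$ preserves the block structure.

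The crucial structural input is provided by the hypothesis that $[A_{\mathrm{diag}}]$ is already in reduced form: this forces the projection $\pi\colon\hlie\to\hdiag$ to send the Galois-Lie algebra $\mathfrak{g}$ of $[A]$ onto the Galois-Lie algebra of $[A_{\mathrm{diag}}]$, which coincides with $\hdiag$. Set $\mathfrak{g}_{\mathrm{sub}}:=\mathfrak{g}\cap\hsub$, an ideal of $\mathfrak{g}$ (as $\hsub$ is an ideal of $\hlie$). I would then establish that $[\hdiag,\mathfrak{g}_{\mathrm{sub}}]\subseteq\mathfrak{g}_{\mathrm{sub}}$: indeed, for $X\in\hdiag$ and $Y\in\mathfrak{g}_{\mathrm{sub}}$, any lift $\sigma(X)=X+\eta(X)\in\mathfrak{g}$ (with $\eta(X)\in\hsub$) satisfies $[\sigma(X),Y]=[X,Y]$ because $[\hsub,\hsub]=0$ by Lemma~\ref{lem5}, and $[\sigma(X),Y]\in\mathfrak{g}_{\mathrm{sub}}$ because $\sigma(X)\in\mathfrak{g}$. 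By connectedness of $H_{\mathrm{diag}}$, the adjoint action of every $Q_{\mathrm{diag}}(x)\in H_{\mathrm{diag}}(\mathbf{k})$ thus preserves $\mathfrak{g}_{\mathrm{sub}}(\mathbf{k})$, and therefore also permutes the affine cosets in $\hsub(\mathbf{k})$ of the form $\{X\in\hsub(\mathbf{k}):A_{\mathrm{diag}}+X\in\mathfrak{g}(\mathbf{k})\}$ compatibly with the conjugation action on the sub-diagonal part of any element of $\mathfrak{g}(\mathbf{k})$ whose diagonal part equals $A_{\mathrm{diag}}$. Reading off the sub-diagonal part of $Q[A]\in\mathfrak{g}(\mathbf{k})$ and conjugating back by $Q_{\mathrm{diag}}^{-1}$ shows that $Z_{\mathrm{sub}}$ itself lies in such a coset; equivalently, $P[A] = A_{\mathrm{diag}}+Z_{\mathrm{sub}}\in\mathfrak{g}(\mathbf{k})$, which is precisely the condition that $[P[A]]$ be in reduced form.

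The main obstacle, compared with the abelian case of \cite{ApDrWe16a} in which $\mathrm{ad}(\hdiag)$ is simultaneously diagonalizable on $\hsub$ and the analysis proceeds weight by weight, is to transfer the invariance of $\mathfrak{g}(\mathbf{k})$ through the gauge action of $H_{\mathrm{diag}}(\mathbf{k})$ without any diagonalization hypothesis. The argument above sidesteps this by leveraging only the abelianness of $\hsub$ and its ideal property inside $\hlie$ (both furnished by Lemma~\ref{lem5}), together with the fact that the derivative term $Q_{\mathrm{diag}}'Q_{\mathrm{diag}}^{-1}$ stays on the diagonal and hence does not interfere with the sub-diagonal bookkeeping.
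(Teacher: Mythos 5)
Your opening moves coincide with the paper's: invoke Proposition~\ref{propo1} to obtain $Q\in H(\mathbf{k})$ with $Q[A]\in\mathfrak{g}(\mathbf{k})$, and factor $Q=Q_{\mathrm{diag}}\cdot P$ with $P=\id_n+B$, $B\in\hsub(\mathbf{k})$ (this is exactly the paper's $P=R\,Q$ with $R=Q_{\mathrm{diag}}^{-1}$). Two small remarks here: to get $Q$ over $\mathbf{k}$ rather than over $\overline{\mathbf{k}}$ you must first prove that $G$ is connected, which the paper does in a separate lemma using the vector group $G_{\mathrm{sub}}$ and the reducedness of $[\adiag]$; and the derivative term in $P[A]$ is $-B'$, not $+B'$ (immaterial for the shape of the conclusion).

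The genuine gap is in your last step. Writing $Q[A]=Q_{\mathrm{diag}}[P[A]]=Q_{\mathrm{diag}}[\adiag]+\mathrm{Ad}(Q_{\mathrm{diag}})(Z_{\mathrm{sub}})$, the diagonal part of $Q[A]$ is $Q_{\mathrm{diag}}[\adiag]$, which is in general \emph{not} $\adiag$. So ``reading off the sub-diagonal part of $Q[A]$'' only tells you that $\mathrm{Ad}(Q_{\mathrm{diag}})(Z_{\mathrm{sub}})$ lies in the coset attached to the diagonal part $Q_{\mathrm{diag}}[\adiag]$, not the one attached to $\adiag$. To transfer this back you need the full gauge action of $Q_{\mathrm{diag}}^{-1}$ --- including the logarithmic-derivative term $(Q_{\mathrm{diag}}^{-1})'Q_{\mathrm{diag}}$, which lies in $\hdiag(\mathbf{k})$ but a priori not in $\mathfrak{g}(\mathbf{k})$ --- to map $\mathfrak{g}(\mathbf{k})$ into itself. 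The facts you actually establish ($\pi(\mathfrak{g})=\hdiag$ and $\mathrm{Ad}(H_{\mathrm{diag}})$ preserving $\mathfrak{g}_{\mathrm{sub}}$) are strictly weaker: they are compatible with $\mathfrak{g}$ being a graph $\{X+\eta(X):X\in\hdiag\}$ with $\eta\neq 0$ and $\mathfrak{g}_{\mathrm{sub}}=0$, in which case the cosets are singletons and nothing you proved forces the coset of $Q_{\mathrm{diag}}[\adiag]$ to be carried onto the coset of $\adiag$. What is needed --- and what the paper proves and uses at exactly this point --- is the inclusion $\hdiag\subseteq\mathfrak{g}$, equivalently $H_{\mathrm{diag}}(\mathbf{k})\subseteq G(\mathbf{k})$: since $[\adiag]$ is reduced and $H_{\mathrm{diag}}$ is connected, $H_{\mathrm{diag}}$ \emph{is} the Galois group of the diagonal system, which embeds into $G$ via the splitting $G\simeq G_{\mathrm{sub}}\rtimes G/G_{\mathrm{sub}}$. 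Then $Q_{\mathrm{diag}}^{-1}\in G(\mathbf{k})$, and Lemma~\ref{reduite-minimale} immediately gives that $P[A]=Q_{\mathrm{diag}}^{-1}\bigl[Q[A]\bigr]$ is still in reduced form. Without that inclusion your coset bookkeeping does not close.
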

We first prove the following auxiliary lemma.  Let $G$ be the differential Galois group of $ Y'(x)=A(x)Y(x)$ and $\glie$ be the Lie algebra of $G$. 
{Let $H$ be the connected algebraic group with Lie algebra $\liealg (A)$}. 
\begin{lemma}\label{lem4}
The differential Galois group $G$ is connected.
\end{lemma}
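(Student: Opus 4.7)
The plan is to realize $G$ as an extension of the Galois group $G_{\mathrm{diag}}$ of $[\adiag]$ by a unipotent normal subgroup $N$ contained in $H_{\mathrm{sub}}$, and to deduce that $G$ is connected from the connectedness of both $N$ and $G_{\mathrm{diag}}$.

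First I would use the block-lower-triangular shape of $A(x)$ to obtain an inclusion of Picard--Vessiot fields $K_{\mathrm{diag}} \subseteq K$, where $K_{\mathrm{diag}}$ (resp.\ $K$) is a Picard--Vessiot extension of $\mathbf{k}$ for $[\adiag]$ (resp.\ for $[A]$). Indeed, as recalled in the introduction, a fundamental solution matrix of $[A]$ has the form
\[
U=\left(\begin{array}{c|c} U_1 & 0 \\ \hline U_2 V & U_2 \end{array}\right),
\]
with $U_{\mathrm{diag}}:=\mathrm{diag}(U_1,U_2)$ a fundamental solution matrix of $[\adiag]$ and $V'=U_2^{-1}S U_1$. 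Standard differential Galois theory then provides a short exact sequence of linear algebraic groups $1\to N \to G \stackrel{\pi}{\longrightarrow} G_{\mathrm{diag}} \to 1$, with $N:=\mathrm{Gal}(K/K_{\mathrm{diag}})$ a closed normal subgroup of $G$.

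Next I would identify $N$ with a closed subgroup of $H_{\mathrm{sub}}$. By Proposition~\ref{propo1} applied to the connected group $H$ of Lemma~\ref{lem6}, any $\sigma\in G$ satisfies $\sigma(U)=U\,C_\sigma$ with $C_\sigma \in H(\CC)$; since both $U$ and $\sigma(U)$ are block-lower-triangular, so is $C_\sigma$. For $\sigma\in N$, the restriction $\sigma|_{K_{\mathrm{diag}}}$ is trivial, so $\sigma(U_i)=U_i$ for $i=1,2$, which forces the diagonal blocks of $C_\sigma$ to equal the identity. Hence $C_\sigma = \id_n + B_\sigma$ with $B_\sigma \in \hsub$, so $C_\sigma \in H_{\mathrm{sub}}$. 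By Lemma~\ref{lem7}, the map $B\mapsto \id_n+B$ is an isomorphism of algebraic groups $(\hsub,+) \cong H_{\mathrm{sub}}$, so $N$ embeds as a Zariski-closed subgroup of the additive vector group $(\CC^{\dim \hsub},+)$. In characteristic zero, every Zariski-closed subgroup of such a vector group is a linear subspace, hence connected; consequently $N$ is connected.

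The hypothesis that $[\adiag]$ is in reduced form ensures that the Lie algebra of $G_{\mathrm{diag}}$ equals $\hdiag$, so $G_{\mathrm{diag}}^{\circ}=H_{\mathrm{diag}}$; in the working setting of the paper (passing if necessary to a finite algebraic extension, as is standard to apply Proposition~\ref{propo1}(2) $\mathbf{k}$-rationally) one may assume $G_{\mathrm{diag}}$ is connected, i.e.\ $G_{\mathrm{diag}}=H_{\mathrm{diag}}$. A standard identity-component argument then concludes: $G^{\circ}N/N$ is a closed subgroup of finite index in the connected group $G_{\mathrm{diag}}$, hence equals $G_{\mathrm{diag}}$, so $G^{\circ}N=G$; then $G^{\circ}\cap N$ is a closed subgroup of finite index in the connected group $N$, hence equals $N$, and therefore $G^{\circ}=G$. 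The main obstacle is the connectedness of $G_{\mathrm{diag}}$, which is not a formal consequence of reducedness alone but is part of the running hypotheses here; the remainder follows straightforwardly from the block-lower-triangular shape and the structural results in Lemmas~\ref{lem7} and~\ref{lem6}.
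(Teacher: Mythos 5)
Your proposal is correct and follows essentially the same route as the paper: the paper's proof also identifies the normal subgroup of elements acting trivially on the diagonal blocks (your $N$, its $G_{\mathrm{sub}}$) as a vector group, invokes the Galois correspondence to identify the quotient with the Galois group of $[\adiag]$, and uses Lemma~32 of \cite{ApCoWe13a} (connectedness of the Galois group of a reduced system) to conclude. Your version merely spells out the final identity-component argument that the paper leaves implicit.
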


\begin{proof}[Proof of Lemma \ref{lem4}]
 The elements of $G $ are of the form $\left(\begin{array}{c|c} G_{1} & 0 \\\hline G_{2,1} & G_{2}\end{array}\right)\in \mathrm{GL}_n\left(\CC\right)$. 
 Let $G_u$ be the subgroup of elements of $G$ of the form $\left(\begin{array}{c|c} \id_{\mathrm{n_{1}}} & 0 \\\hline G_{2,1} & \id_{\mathrm{n_{2}}}\end{array}\right)$. 
{As we have seen in \S \ref{sec12}, 
 $G_u$ is a normal subgroup of $G$ and $G\simeq G_u\rtimes G_{\mathrm{diag}}$, where $G_{\mathrm{diag}}$ denotes the differential Galois group of $ Y'(x)=\adiag (x)Y(x)$. }
\\
 Since the system $ Y'(x)=\adiag (x)Y(x)$ is in reduced form, we find that  $G_{\mathrm{diag}}$ is connected, see Lemma 32 in  \cite{ApCoWe13a}. Now $G_u$ is a vector group and hence it is a connected linear algebraic group as well.
\end{proof}

 \begin{proof}[Proof of Theorem \ref{theo1}.]
The differential Galois group $G$ satisfies the inclusion $G \subset H$ (because of the first point of Proposition~\ref{propo1}). Lemma~\ref{lem4} shows that $G$ is connected.
So we may use the second point of Proposition~\ref{propo1} to obtain the existence of ${Q(x):=\left(\begin{array}{c|c} D_{1}(x) & 0 \\\hline S_{Q}(x) & D_{2}(x)\end{array}\right)\in H(\mathbf{k})}$ such that the linear differential system $Q(x)[A(x)]$ is in reduced form. 
\par
Since $[\adiag]$ is in reduced form, Remark \ref{rem3} implies that  $G_{\mathrm{diag}}$, the differential Galois group of $[\adiag]$, admits $\gdiag$ as Lie algebra. By construction, $\liealg (A)$ is included in the smallest algebraic Lie algebra containing  $\gdiag \oplus \glsub$. As a consequence of Lemma \ref{diagsub}, we deduce that $\gdiag \oplus \glsub$ is a Lie algebra. It is even an algebraic Lie algebra  whose algebraic group is $G_{\mathrm{diag}}\times \Big\{\id_n+B, B\in  \glsub \Big\}$, proving that $\liealg (A)\subset \gdiag \oplus \glsub$. 
As $Q(x)\in H(\mathbf{k})$, we have 
	$\left(\begin{array}{c|c} D_{1}(x) & 0 \\\hline 0 & D_{2}(x)\end{array}\right) \in G_{\rm diag}(\mathbf{k})$. 
	Now, as $G_{diag}\simeq G/G_u$, 
	Remark \ref{quotient} shows that $G(\mathbf{k})$ contains a block-triangular matrix of the form
	$R(x):=\left(\begin{array}{c|c} D_{1}(x) & 0 \\\hline T_{2,1}(x) & D_{2}(x)\end{array}\right)$.
	Then \[R(x)^{-1} = \left(\begin{array}{c|c} D_{1}^{-1}(x) & 0 \\\hline -D_{2}^{-1}T_{2,1}D_{1}^{-1}(x) & D_{2}^{-1}(x)\end{array}\right) \in G(\mathbf{k}). \]
	By hypothesis, $Lie( Q[A] ) = \ggot$ (as $[Q[A]]$ is in reduced form). 
	Now, a gauge transformation of an element $Q[A]\in \ggot( \mathbf{k})$ by an element of $G(\mathbf{k})$ transforms $Q[A]$  into another element of $\ggot( \mathbf{k})$, see \cite[Proposition 5.1]{MiSi02a}.
Then, $[R^{-1}[Q[A]]]$ is in reduced form.	A fundamental solution of $[Q[A]]$ is given by $Q^{-1}U$, where $U$ is a fundamental solution of $[A]$. Therefore, $[R^{-1}[Q[A]]]$ has a fundamental solution $RQ^{-1}U=(QR^{-1})^{-1}U$,  and we find that ${R^{-1}[Q[A]]=(QR^{-1})[A]}$.
	It follows that $[(QR^{-1})[A]]$ is in reduced form. We have 
	\[ QR^{-1}  = \left(\begin{array}{c|c} \id_{\mathrm{n_{1}}} & 0 \\\hline (S_Q-T_{2,1})D_1^{-1} & \id_{\mathrm{n_{2}}}\end{array}\right) \in\Big\{\id_n+ \glsub \left(\mathbf{k}\right)\Big\}.  \]
Then, $QR^{-1}$ is the expected gauge transformation.
\end{proof}

The following corollary will be a key ingredient for the reduction procedure of $\S \ref{sec4}$. 

\begin{corollary}\label{coro1}
Assume that, for all gauge transformations of the form ${P(x)\in\Big\{\id_n+B(x), B(x)\in  \glsub \left(\mathbf{k}\right)\Big\}}$, we have 
$\liealg(A)\subseteq \liealg(P[A])$.
Then, the linear differential system $ Y'(x)=A(x)Y(x)$ is in reduced form.
\end{corollary}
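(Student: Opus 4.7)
The plan is to combine Theorem~\ref{theo1} (existence of a reducing gauge transformation of the desired shape) with the universal inclusion $\glie \subseteq \liealg(A)$ guaranteed by the first part of Proposition~\ref{propo1}.

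First, I would invoke Theorem~\ref{theo1} to produce a particular gauge transformation $P_0(x) = \id_n + B_0(x)$ with $B_0(x) \in \hsub(\mathbf{k})$ such that the transformed system $[P_0[A]]$ is in reduced form. By definition of reduced form, this means $\liealg(P_0[A])$ equals the Galois-Lie algebra of $[P_0[A]]$. The key observation is that this Galois-Lie algebra coincides literally with $\glie$, the Galois-Lie algebra of $[A]$: indeed, since $P_0(x) \in \mathrm{GL}_n(\mathbf{k})$, if $U(x)$ is a fundamental solution matrix of $[A]$, then $P_0(x)U(x)$ is one for $[P_0[A]]$, and any $\varphi$ in the Galois group fixes $P_0(x)$ elementwise, so the two representations $\rho_U$ and $\rho_{P_0 U}$ give the same subgroup of $\mathrm{GL}_n(\CC)$. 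Hence $\liealg(P_0[A]) = \glie$.

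Next, I apply the hypothesis of the corollary to this particular $P_0$: it yields
\[
\liealg(A) \subseteq \liealg(P_0[A]) = \glie.
\]
Combined with the reverse inclusion $\glie \subseteq \liealg(A)$ noted just after the proof of Lemma~\ref{lem6} (a direct consequence of Proposition~\ref{propo1}(1) applied to the algebraic group $H$ with Lie algebra $\hlie \supseteq \liealg(A)$), this forces $\liealg(A) = \glie$, which is exactly the statement that $[A]$ is in reduced form.

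I do not expect any real obstacle here; the entire technical content has already been concentrated into Theorem~\ref{theo1}, and the corollary is essentially its logical dual. The only point requiring a line of justification is the equality $\liealg(P_0[A]) = \glie$, i.e.\ the invariance of the Galois-Lie algebra under a $\mathbf{k}$-rational gauge transformation, which is immediate from the Picard-Vessiot formalism recalled in \S\ref{secgalois}.
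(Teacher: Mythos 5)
Your proposal is correct and follows essentially the same route as the paper: invoke Theorem~\ref{theo1} to get a reducing gauge transformation of the prescribed shape, apply the hypothesis to that particular $P_0$, identify $\liealg(P_0[A])$ with $\glie$ via invariance of the Galois group under $\mathbf{k}$-rational gauge transformations, and close with the reverse inclusion $\glie\subseteq\liealg(A)$ from Proposition~\ref{propo1}. Your explicit justification of the equality $\liealg(P_0[A])=\glie$ is a point the paper's proof uses implicitly, so no gap remains.
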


\begin{proof}
Theorem~\ref{theo1} provides $P(x)=\id_n+B(x)$ with  ${B(x)\in \glsub \left(\mathbf{k}\right)}$ 
 such that the system $ Y'(x)=P(x)[A(x)]Y(x)$  is in reduced form. 
By assumption, we have ${\liealg(A) \subseteq \liealg(P[A])}$. But since $ Y'(x)=P(x)[A(x)]Y(x)$  is in reduced form,  we have $\liealg(P[A])=\mathfrak{g}$. This shows that $\liealg(A)\subseteq \mathfrak{g}$. 
By Remark \ref{rem3}, we had $\mathfrak{g}\subseteq \liealg(A)$ so  $\liealg(A)=\mathfrak{g}$,
which proves the result.
\end{proof}

\pagebreak[3]
\subsection{The Adjoint Action $\Psi = [\adiag (x),\bullet]$}\label{sec33} 

We refer to $\S \ref{secgalois}$ and $\S \ref{sec31}$ for the notations and definitions used in this subsection. 
The adjoint action is $ [\adiag (x),\bullet]$ for consistence of formulas.
In $\S \ref{sec32}$, we have proved  the existence of a gauge transformation matrix ${P(x)\in\Big\{\id_n+B(x), B(x)\in  \glsub \left(\mathbf{k}\right)\Big\}}$ such that the system ${ Y'(x) = P(x)[A(x)]Y(x)}$ is in reduced form. 
Let $B_1,\dots, B_\sigma \in \CM_n\left(\CC\right)$ be a basis of $\glsub$. 
The next proposition generalizes \cite[Proposition 3.6]{ApDrWe16a} in our context.

\pagebreak[3]
\begin{proposition}\label{propo2} 
Let $P(x):=\id_n + \displaystylee\sum_{i=1}^{\sigma} f_i (x)B_i$, with $f_i (x)\in \mathbf{k}$. 
The gauge transformation of [A] by $P(x)$ is
\[ P(x)[A(x)] = \adiag(x) + \underbrace{\asub(x) + \sum_{i=1}^{\sigma} f_i (x)[\adiag (x),B_i] -\sum_{i=1}^{\sigma}  f'_i (x) B_i.}_{\textrm{ off-diagonal part to be reduced}}\]
\end{proposition}

\begin{proof}
The gauge transformation of $[A]$ by $P(x)$ is given by the formula  ${P(x)[A(x)]=P(x)^{-1}A(x) P(x)-P^{-1}(x)P' (x)}$, see Definition \ref{gauge-def}. 
Computations are made simple by the fact that the product of two elements of $\glsub (\mathbf{k})$ is zero. 
We have the equalities ${P^{-1} (x)= \id_n - \displaystylee\sum_{i=1}^{\sigma} f_i (x)B_i }$ and ${P^{-1}(x)A(x) =  A(x) - \displaystylee\sum_{i=1}^{\sigma} f_{i} (x)B_i \adiag (x)}$. 
As 
$A(x)=\adiag (x)+\asub (x)$, we  find that

$$\begin{array}{lll}
P(x)^{-1}A(x) P(x) &=& \left(\adiag (x)+\asub (x) -\displaystylee\sum_{i=1}^{\sigma} f_{i}(x) B_i \adiag (x)\right)\left(\id_n + \displaystylee\sum_{i=1}^{\sigma} f_i (x)B_i\right)\\
&= &\adiag (x)+\asub (x) + \displaystylee\sum^{\sigma}_{i=1} 
\left( -f_{i}(x) B_i \adiag (x)+ f_i (x) \adiag (x)B_i \right)\\
& = & \adiag (x)+\asub (x) + \displaystylee\sum_{i=1}^{\sigma} f_i (x)[\adiag (x),B_i].
\end{array}$$
Similarly, we have
\[
{P^{-1}(x)P' (x) =\left(\id_n - \displaystylee\sum_{j=1}^{\sigma} f_j(x) B_j\right) \left( \displaystylee\sum_{i=1}^{\sigma}  f'_i (x) B_i\right)=\displaystylee\sum_{i=1}^{\sigma} f'_i (x)B_i.}
\]
This yields the desired result.
\end{proof}

The Lie algebra $\glsub$ is stable under the bracket  $[\gdiag,\bullet]$, see Lemma~\ref{lem10}.
{ 
This implies that the  $\mathbf{k}$-linear map ${\Psi:=[\adiag (x),\bullet]}$, which is the adjoint action of $\gdiag \left(\mathbf{k}\right)$ on $\glsub \left(\mathbf{k}\right)$, is well defined:
\[\begin{array}{cccc}
\Psi: & \glsub \left(\mathbf{k}\right) & \longrightarrow & \glsub \left(\mathbf{k}\right) \\ 
 & B (x)&\longmapsto  & [\adiag (x),B(x)].
\end{array}\]
Writing
\[ B(x) = \left(\begin{array}{c|c} 0& 0 \\\hline B_s(x) & 0\end{array}\right),\]
the action of $\Psi$ on $B$ induces an action $\bar\Psi$ on $B_s$ given by
	\[\bar\Psi(B_s(x))= A_2(x) B_s(x)-B_s(x) A_1(x).\]

\begin{lemma} \label{matrix-of-psi}
With the above notations, the matrix $\Psi$ 
of the adjoint action of $\adiag(x)$  on $\glsub$ is 
\[ \Psi=
A_2 \otimes \id_{\mathrm{n_{1}}} - \id_{\mathrm{n_{2}}}\otimes A_1^T.\]
\end{lemma}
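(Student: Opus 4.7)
\medskip

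\noindent\textbf{Plan of proof.} The statement identifies the matrix (with respect to a natural basis) of the $\mathbf{k}$-linear endomorphism $\Psi$ restricted to $\hsub$, once $\hsub$ is identified with the space of $n_2\times n_1$ matrices via $B(x)\longleftrightarrow B_s(x)$. The natural tool is the vectorization operator $\mathrm{vec}$ that stacks the columns of a matrix, together with the classical Kronecker-product identity
\[
\mathrm{vec}(P X Q) \;=\; (Q^T\otimes P)\,\mathrm{vec}(X),
\]
valid for any matrices $P,X,Q$ of compatible sizes. My plan is to apply this identity twice to the two terms in $\Psi(B_s(x))=B_s(x)A_1(x)-A_2(x)B_s(x)$.

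Concretely, first I would recall (or prove in one line by checking entry-wise on elementary matrices $E_{ij}$) the identity above. Then I would compute
\[
\mathrm{vec}\bigl(B_s(x)A_1(x)\bigr) \;=\; \mathrm{vec}\bigl(\id_{n_2}\cdot B_s(x)\cdot A_1(x)\bigr)\;=\;\bigl(A_1(x)^T\otimes \id_{n_2}\bigr)\mathrm{vec}(B_s(x)),
\]
and
\[
\mathrm{vec}\bigl(A_2(x)B_s(x)\bigr) \;=\; \mathrm{vec}\bigl(A_2(x)\cdot B_s(x)\cdot\id_{n_1}\bigr)\;=\;\bigl(\id_{n_1}\otimes A_2(x)\bigr)\mathrm{vec}(B_s(x)).
\]
Subtracting gives exactly $\mathrm{vec}(\Psi(B_s(x)))=\bigl(A_1(x)^T\otimes \id_{n_2}-\id_{n_1}\otimes A_2(x)\bigr)\mathrm{vec}(B_s(x))$, which is the desired matrix representation of $\Psi$.

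There is essentially no obstacle: the content of the lemma is the conventional way of turning a Sylvester-type operator $X\mapsto XA-BX$ into an honest matrix via Kronecker products. The only thing that needs to be stated carefully is the chosen isomorphism $\hsub(\mathbf{k})\simeq \mathbf{k}^{n_1 n_2}$ (column stacking of $B_s$), since a different choice (row stacking, or a transposition convention) would permute the two tensor factors; this is the point I would be most careful about when writing up the proof to make sure the formula appears exactly as stated, with $A_1^T$ on the left and $A_2$ on the right.
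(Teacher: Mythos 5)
Your proof is correct and follows exactly the same route as the paper: both apply the standard identity $\texttt{vec}(X_1 B_s X_2) = (X_2^T\otimes X_1)\texttt{vec}(B_s)$ to the two terms of $\Psi(B_s) = B_s A_1 - A_2 B_s$. Your write-up is slightly more explicit about the column-stacking convention, but the argument is identical.
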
	

\begin{proof}
The row-\texttt{vec} operator 
transforms a matrix into a vector by transposing each row and stacking them into a vector.
It it well known that, for general matrices, the row-\texttt{vec} operator satisfies  
	$\texttt{vec}(MX)=(M\otimes \id)\texttt{vec}(X)$ and 
	$\texttt{vec}(XM)=(\id\otimes M^{T})\texttt{vec}(X)$,
see \cite{PePeot08a}, Section 10.2. 
It follows that $\texttt{vec}( \bar\Psi(B_s ) ) = 
	\left( A_2 \otimes \id_{\mathrm{n_{1}}} - \id_{\mathrm{n_{2}}}\otimes A_1^T\right)
	\cdot \texttt{vec}(B_s)$. 
\end{proof}
}
\begin{remark} \label{psi-reduit}
In this remark, we use the language of differential modules as in \cite{PS03,ApCoWe13a}.
The differential system 
{
$[A_2 \otimes \id_{\mathrm{n_{1}}} -  \id_{\mathrm{n_{2}}} \otimes A_1^T]$
corresponds to the differential module $\CM_2 \otimes \CM_1^\star$,}
where the connections on each $\CM_i$ have matrices $A_i$ respectively. 
Let us show that $[\Psi]$ is in reduced form.
Indeed, $\Psi$ is the matrix of {the connection on  $\CM_2 \otimes \CM_1^\star$}
which is a submodule of a tensor construction on $\CM_1\oplus \CM_2$.
So any semi-invariant of $\CM_2 \otimes \CM_1^\star$ in a construction, i.e.~an exponential solution of the corresponding differential system,
can be extended, by adding zeroes,  into a semi-invariant of $\CM_1\oplus \CM_2$.
Now, the matrix of the connection on 
$\CM_1\oplus \CM_2$ is $\adiag$ which is in reduced form. 
So, Theorem~1 of \cite{ApCoWe13a} shows that any semi-invariant of $\CM_1\oplus \CM_2$ has constant coefficients. It follows that any semi-invariant of $\CM_2 \otimes \CM_1^\star$ will then have constant coefficients.  So, Theorem~1 of \cite{ApCoWe13a} implies that $[\Psi]$ is in reduced form.
\end{remark}

\subsection{Decomposition of $\glsub$ into $\Psi$-spaces}\label{seciso}

\subsubsection{Isotypical decomposition of $\glsub$ into $\Psi$-spaces}
Proposition~\ref{propo2} shows that reduction will be essentially governed by the adjoint map $\Psi$.
We had the Wei-Norman decomposition $\adiag (x)=\displaystylee \sum_{i=1}^{\delta}g_{i}(x)M_{i}$, where
the $g_{i}(x)\in \mathbf{k}$ were $\CC$-linearly independent. 
For each ${i \in  \{ 1, \dots, \delta\}}$,
we define the $\CC$-linear map
$\Psi_{i}: \begin{array}{lll}\glsub&\longrightarrow& \glsub\\
B& \longmapsto&  [M_{i},B]\end{array}$
so that $\Psi=\displaystylee \sum_{i=1}^{\delta}g_{i}(x)\Psi_{i}$.

\begin{definition} \label{psi-space}
Consider a vector space $W \subset \glsub $. 
We say that $W$ is a \emph{$\Psi$-space} 
when $\Psi (W )\subset W\otimes_{\CC} \mathbf{k}$.
\\ 
Let $\CR:=\CC[\Psi_1,\ldots,\Psi_{\delta}]$ denote the $\CC$-algebra generated by the $\Psi_i$.
We say that $W$ is an \emph{$\CR$-module} (or submodule of $\glsub$)
when, for all $i \in  \{ 1, \dots, \delta\}$, 
	$\Psi_{i} (W)\subset W$.
\end{definition}

Note that, by Remark \ref{rem:alg lie alg}, any vector subspace of $\glsub$ is an algebraic Lie algebra so any $\Psi$-space is an algebraic Lie algebra.
The map $\Psi$ acts naturally on the Lie algebra $\glie$; its action on the off-diagonal matrices of $\glie$ will govern our reduction strategy, as suggested by the following lemma.
\begin{lemma}\label{lem5}
Let $\glie_{sub}:= \glsub\bigcap \glie$ denote the subspace of off-diagonal matrices of $\glie$. Then  
$\glie_{sub}$ is a $\Psi$-space. 
\end{lemma}
\begin{proof}
Let $\left(\begin{array}{c|c} C_1 & 0\\\hline 0 & C_2\end{array}\right)\in \gdiag$; as noted in Remark \ref{quotient}, there exists a corresponding  element $\left(\begin{array}{c|c} C_1 & 0 \\\hline C & C_2\end{array}\right)\in \glie$.  Let $\left(\begin{array}{c|c} 0& 0\\\hline C' & 0\end{array}\right)\in \glie_{sub}$. We have
\[
\left[\left(\begin{array}{c|c} C_1 &0 \\\hline C & C_2 \end{array}\right), \left(\begin{array}{c|c} 0& 0\\\hline C' & 0\end{array}\right)\right] =\left(\begin{array}{c|c} 0 &0 \\\hline C_2 C'-C'C_1 & 0 \end{array}\right)\in \glie_{sub}
\]
and so $\gsub$ is stable under the bracket with $\gdiag$.
Since $[\adiag]$ is in reduced form, the $M_i$ of its Wei-Norman decomposition are in $\gdiag$ so that $\gsub$ is stable under the bracket with each $M_i$.  The result follows with $\Psi=\displaystylee \sum_{i=1}^{\delta}g_{i}(x)\Psi_{i}$.
\end{proof}
In this section, we describe the structure of the $\Psi$-subspaces of $\glsub$ and how to  compute them in order to be able to reduce the off-diagonal part of the Lie algebra of the system.

\begin{lemma}\label{lem3}
A vector subspace $W$ of $\glsub$ is a $\Psi$-space if and only if for all 
$i \in \{1, \dots, \delta\}$, 
$\Psi_{i} (W)\subset W$, i.e. if and only if $W$ is an $\CR$-module.
\end{lemma}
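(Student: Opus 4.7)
The plan is to use the $\CC$-linear independence of the coefficients $g_{i}(x)$ in the Wei-Norman decomposition $\adiag(x)=\sum_{i=1}^{\delta} g_{i}(x) M_{i}$, together with the definition $\Psi = \sum_{i=1}^{\delta} g_{i}(x)\Psi_{i}$, to disentangle the single condition $\Psi(W)\subset W\otimes_{\CC}\mathbf{k}$ into the $\delta$ separate conditions $\Psi_{i}(W)\subset W$.

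For the easy direction ($\CR$-module $\Rightarrow$ $\Psi$-space), I would just note that if $\Psi_{i}(W)\subset W$ for every $i$, then for any $w\in W$,
\[
\Psi(w) \;=\; \sum_{i=1}^{\delta} g_{i}(x)\,\Psi_{i}(w) \;\in\; W\otimes_{\CC}\mathbf{k},
\]
since each $\Psi_{i}(w)\in W$.

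For the converse ($\Psi$-space $\Rightarrow$ $\CR$-module), I would fix a $\CC$-basis $(e_{\alpha})_{\alpha\in I}$ of $\hsub$ obtained by extending a $\CC$-basis $(e_{\alpha})_{\alpha\in J}$ of $W$ (with $J\subset I$). For any $w\in W$, expand
\[
\Psi_{i}(w) \;=\; \sum_{\alpha\in I} c_{i,\alpha}\, e_{\alpha},\qquad c_{i,\alpha}\in\CC.
\]
Then $\Psi(w)=\sum_{\alpha\in I}\bigl(\sum_{i=1}^{\delta} c_{i,\alpha}\, g_{i}(x)\bigr)\, e_{\alpha}$. The assumption $\Psi(w)\in W\otimes_{\CC}\mathbf{k}$ forces the coefficient of $e_{\alpha}$ to vanish for every $\alpha\in I\setminus J$, i.e.
\[
\sum_{i=1}^{\delta} c_{i,\alpha}\, g_{i}(x) \;=\; 0 \qquad \text{for all } \alpha\in I\setminus J.
\]
Because the $g_{i}(x)$ are $\CC$-linearly independent in $\mathbf{k}$, this forces $c_{i,\alpha}=0$ for all $i$ and all $\alpha\in I\setminus J$. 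Hence $\Psi_{i}(w)\in W$ for each $i$, proving that $W$ is an $\CR$-module.

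There is essentially no obstacle here: the statement is a direct consequence of the $\CC$-linear independence of the $g_{i}(x)$ built into the Wei-Norman decomposition, which allows one to identify the $\mathbf{k}$-linear combination $\Psi=\sum g_{i}\Psi_{i}$ with the family of its $\CC$-linear components. The only minor care needed is to work with a basis of $\hsub$ that extends a basis of $W$ in order to cleanly separate the ``$W$-part'' from the ``non-$W$ part'' of each $\Psi_{i}(w)$.
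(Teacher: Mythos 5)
Your proof is correct and follows essentially the same route as the paper's: both directions rely on completing a basis of $W$ to a basis of $\hsub$ and invoking the $\CC$-linear independence of the $g_i(x)$ from the Wei-Norman decomposition to conclude that the non-$W$ components of each $\Psi_i(w)$ vanish. Nothing is missing.
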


\begin{proof}
By construction, we have $\Psi=\displaystylee \sum_{i=1}^{\delta}g_{i}(x)\Psi_{i}$. If for all $i \in \{1, \dots, \delta\}$,
${\Psi_{i} (W)\subset W}$, it is clear that $\Psi(W)\subset W\otimes_{\CC}\mathbf{k}$.
Conversely, let us assume that $W$ is a $\Psi$-space.  
Let $E_{1},\dots,E_{\kappa}$ be a basis of $W$; we complete it into a basis $E_{1},\dots,E_{\sigma}$ of  $\glsub$. We recall that the $\Psi_{i}$ are $\CC$-linear maps. 
For $i \in \{1, \dots, \delta\}$  and $u\in W$, let $c_{i,j,u}\in \CC$ such that $\Psi_{i}(u)=\displaystylee\sum_{j=1}^{\sigma}c_{i,j,u}E_{j}$. 
Since $\Psi=\displaystylee \sum_{i=1}^{\delta}g_{i}(x)\Psi_{i}$, we find that $\Psi(u)=\displaystylee\sum_{i=1}^{\delta}\sum_{j=1}^{\sigma}g_{i}(x)c_{i,j,u}E_{j}$. 
For $u\in W$, the fact that $\Psi (u)\in W{\otimes_{\CC}\mathbf{k}}$ implies that we have, for all  
$j\in\{ \kappa +1, \ldots, \sigma\}$,
$\displaystylee \sum_{i=1}^{\delta} g_{i}(x)c_{i,j,u}=0$. But the $g_{i}(x)\in \mathbf{k}$ are $\CC$-linearly independent
so, for all $i\in \{ 1,\dots, \delta\}$ and all $j\in \{\kappa+1,\dots, \sigma\}$,  we have $c_{i,j,u}=0$. This proves that, for all  
$i\in \{1,\dots, \delta\}$, $\Psi_{i} (u)\in W$
and hence $\Psi_{i} (W)\subset W$. 
\end{proof}

Lemma \ref{lem3}, applied with $W=\glsub$, tells us that for all 
$i \in \{1,\dots, \delta\}$, 
$\Psi_{i}(\glsub)\subset \glsub$.
So the abelian Lie-algebra $\glsub$ is endowed with a natural structure of $\CR$-module. 

An $\CR$-module $W$ is called \emph{decomposable} if it admits two proper $\CR$-submodules $W_1$ and $W_2$ such that $W= W_1 \oplus W_2$; it is 
\emph{indecomposable} otherwise. A morphism $\phi:W_1 \rightarrow W_2$ is a morphism of $\CR$-modules if each $W_i$ is an $\CR$-module
and, for all $M\in \CR$ and $N\in W_1$, $\phi(M.N)=M.\phi(N)$. We write $W_1 \simeq_{\CR} W_2$ when the $W_i$ are isomorphic $\CR$-modules. 

\begin{proposition}[Krull-Schmidt, \cite{La01a}, Corollary 19.22, page 288]\label{propo3}
The $\CR$-module $\glsub$  admits a decomposition 
	$\glsub =\displaystylee \bigoplus_{i=1}^{\kappa} W_{i},$
such that: 
\begin{itemize}
\item Each $W_{i}$ is an $\CR$-module.
\item Each $W_i$ admits a decomposition $W_{i}=\displaystylee \bigoplus_{j=1}^{\nu_{i}} V_{i,j}$, where the $(V_{i,j})_{1\leq j\leq \nu_{i}}$
 are pairwise isomorphic indecomposable $\CR$-modules.
\item For $i_1 \neq i_2$, all non zero indecomposable $\CR$-modules $V_{i_1,s}\subset W_{i_1}$ and $V_{i_2,t}\subset W_{i_2}$ are non-isomorphic 
$\CR$-modules.
\end{itemize}
Moreover, this decomposition is unique up to $\CR$-module isomorphisms.
\end{proposition}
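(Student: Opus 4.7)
The proposition is the Krull--Schmidt theorem for the $\CR$-module $\hsub$, so the plan is to verify that the hypotheses of Krull--Schmidt are met and then repackage the resulting indecomposable decomposition by grouping isomorphism types. I would proceed in three steps.

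First, I would establish that $\hsub$ has finite length as an $\CR$-module. By construction $\hsub$ is a finite-dimensional $\CC$-vector space and every $\CR$-submodule is in particular a $\CC$-subspace; hence the lattice of $\CR$-submodules of $\hsub$ is contained in the lattice of $\CC$-subspaces and is both Artinian and Noetherian. From this, a straightforward induction on $\dim_{\CC}$ shows that $\hsub$ decomposes as a finite direct sum of indecomposable $\CR$-submodules: if $\hsub$ is indecomposable we stop, otherwise we split off a proper summand and recurse on each factor, the process terminating because dimensions strictly decrease.

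Second, I would invoke Fitting's lemma to obtain uniqueness. For any indecomposable $\CR$-submodule $V \subset \hsub$ and any $\varphi \in \End_{\CR}(V)$, the finite length of $V$ ensures that the descending chain $\mathrm{Im}(\varphi^n)$ and the ascending chain $\ker(\varphi^n)$ stabilize, giving $V = \ker(\varphi^n) \oplus \mathrm{Im}(\varphi^n)$ for large $n$; indecomposability then forces either $\varphi$ nilpotent or $\varphi$ invertible, so $\End_{\CR}(V)$ is local. With this, the classical Krull--Schmidt--Azumaya theorem (\cite{La01a}, Theorem~19.21) applies and yields that any two decompositions of $\hsub$ into indecomposable $\CR$-submodules have the same number of terms and the summands are pairwise isomorphic up to permutation.

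Third, I would form the isotypical components $W_i$ by fixing one indecomposable decomposition $\hsub = \bigoplus_{k} V_k$ and collecting, for each isomorphism class of indecomposable $\CR$-module appearing among the $V_k$, the sum of all $V_k$ in that class. This yields $\hsub = \bigoplus_{i=1}^{\kappa} W_i$ with $W_i = \bigoplus_{j=1}^{\nu_i} V_{i,j}$, the $V_{i,j}$ pairwise $\CR$-isomorphic within each $W_i$ and non-isomorphic across distinct $W_i$'s. Uniqueness of the $W_i$ (as submodules, not only up to isomorphism) follows from Krull--Schmidt applied to any other decomposition: the sum of all indecomposable summands in a given isomorphism class is independent of the chosen refinement.

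The only step requiring genuine attention is the verification that $\End_{\CR}(V)$ is local for indecomposable $V$; everything else is formal once finite length is in hand. Since the statement is explicitly cited from Lang, I would present the argument in this compact form rather than reproving Krull--Schmidt from scratch.
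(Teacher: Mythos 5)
Your argument is correct in substance and is essentially the intended one: the paper gives no proof of this proposition but simply cites Lam, and your reconstruction --- finite length from $\dim_{\CC}\hsub<\infty$, existence of an indecomposable decomposition by induction on dimension, locality of $\End_{\CR}(V)$ via Fitting's lemma, uniqueness by Krull--Schmidt--Azumaya, and then grouping summands by isomorphism type into the blocks $W_i$ --- is exactly the standard route behind the cited corollary.

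One correction, though: in your third step you assert that the $W_i$ are unique \emph{as submodules}, ``not only up to isomorphism'', because ``the sum of all indecomposable summands in a given isomorphism class is independent of the chosen refinement''. This is false for general finite-length modules (it is a special feature of the semisimple case). Take $\delta=1$ and $\Psi_1$ nilpotent on $\CC^3$ with Jordan blocks of sizes $2$ and $1$, say $\Psi_1 e_2=e_1$, $\Psi_1 e_1=\Psi_1 e_3=0$. Then $\langle e_1,e_2\rangle\oplus\langle e_3\rangle$ and $\langle e_1,e_2\rangle\oplus\langle e_1+e_3\rangle$ are two decompositions into indecomposables, and the isotypical block of the one-dimensional type is $\langle e_3\rangle$ in the first and $\langle e_1+e_3\rangle$ in the second: the same submodule is not recovered. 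Krull--Schmidt only pins down the multiset of isomorphism classes (hence $\kappa$, the $\nu_i$, and the classes of the $V_{i,j}$), which is precisely the uniqueness ``up to $\CR$-module isomorphisms'' claimed in the proposition. So the statement as written is fully proved by your argument; you should simply delete the parenthetical claim of uniqueness as submodules.
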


\begin{remark}
With a standard slight abuse of notations, we may write $W_i\simeq_{\CR} \nu_i V_i$, for some indecomposable $\CR$-module $V_i$.
The numbers $\kappa$ and $\nu_i$, as well as the $\CR$-module isomorphism class of  $V_i$ are uniquely determined in the 
isotypical decomposition.
\end{remark} 

\begin{definition} \label{isotypical blocks}
{The decomposition $\glsub=\displaystylee \bigoplus_{i=1}^{\kappa} W_{i}$ in Proposition \ref{propo3} is called the \emph{isotypical decomposition} of the $\CR$-module $\glsub$. }\\
A maximal direct sum of pairwise isomorphic indecomposable $\CR$-modules, i.e. one of the spaces $W_i$,
is called an \emph{isotypical block} in the isotypical decomposition of $\glsub$.
\end{definition}

Computing an isotypical decomposition is classically achieved by studying the \emph{eigenring} 
\[ \mathrm{End}_{\CR}(\glsub) := \{ M\in \mathrm{End}_{\CC}(\glsub) \, | \, \forall i\in  \{1, \dots, \delta\}, 
\, M.\Psi_i = \Psi_i.M\},
\]
where $\mathrm{End}_{\CC}(\glsub)$ is the algebra of $\CC$-linear endomorphisms of $\glsub$.
We review in the next paragraph how to use $\mathrm{End}_{\CR}(\glsub)$ to compute a decomposition.
This will follow, for example, from Fitting's Lemma in \cite{La01a}, Lemma 19.16, page 285, or \cite{Ba07a}, where the process is described in the context of Ore-modules
and \cite{BaClWe05a}, where the case of several matrices is addressed. 
The following known algorithm (\cite{Ba07a}) computes the 
isotypical decomposition of  $\glsub$.
\begin{trivlist}
\item  \textbf{Input}: the list of matrices $\Psi_i$. 
\item \textbf{Output}: isotypical decomposition of $\glsub$.
\begin{enumerate}
\item Pick a matrix $M$ with indeterminate coefficients. Solving the linear conditions $M.\Psi_i = \Psi_i.M$ gives  a basis of the eigenring $\mathrm{End}_{\CR}(\glsub)$.
\item Pick a ``sufficiently general'' element $P\in \mathrm{End}_{\CR}(\glsub)$ (see \cite{Ba07a}). 
\item Factor its characteristic polynomial 
as $\chi_P(\lambda)=\prod_i{\chi_i(\lambda)^{m_i}}$.
\item For each factor, compute a basis of the generalized eigenspaces $\ker\left(\chi_i(P)^{m_i}\right)$.
\end{enumerate}
\item \textbf{Return}: A matrix $T$ whose columns are bases of the $\ker\left(\chi_i(P)^{m_i}\right)$.
\end{trivlist}

  Note that the invariant subspaces of $P$ are the $\CR$-submodules 
  of $\glsub$
  and $T^{-1}  \Psi T$ is in block diagonal form, where each block is an indecomposable  $\CR$-module.
This process is described in detail in \cite{Ba07a}, see also \cite{PS03}, Proposition 2.40 or \cite{BaSaWe19a}. 
Indeed, as the differential system $[\Psi]$ is in reduced form, see Remark \ref{psi-reduit}, Theorem~1 of \cite{ApCoWe13a} shows that $\mathrm{End}_{\CR}(\glsub)$ is the eigenring of $[\Psi]$ (in the usual sense)  and \cite{Ba07a} applies mutatis mutandis.  
In \cite{BaSaWe19a}, refinements are given on how to use the eigenring structure to compute the isomorphism classes inside each isotypical block; this will be used in problem \textbf{P2} below.

\begin{remark} It would be tempting to use the factors of $\Pi_{\Psi}$, the minimal polynomial of $\Psi$, to compute an isotypical decomposition.
However, this would be the source of mistakes, as the characteristic spaces of $\Psi$ are defined over $\mathbf{k}$, not over $\CC$. There are examples, see $\S \ref{sec:ex}$, where $\glsub$ is indecomposable while $\Pi_{\Psi}$ is the product of several coprime polynomials.
\end{remark}


\subsubsection{The flag decomposition of an indecomposable $\Psi$-space}\label{ondecoupe}
Let $U$ be a $\Psi$-space. We say that $U$ is an \emph{irreducible} $\Psi$-space if its only $\Psi$-subspaces are $\{0\}$ and $U$.
Schur's lemma shows that any automorphism of  an irreducible $\Psi$-space is a scalar multiple of the identity. This is generalized in the following lemma,  which is sometimes known as Goursat's lemma.

\begin{lemma}[Goursat's lemma, \cite{CoSi98a}, Lemma 2.2] \label{lem1} 
Let $V : = U_1 \oplus \cdots \oplus U_{\nu}$ where $U_1, \ldots, U_{\nu}$ denote pairwise
isomorphic {irreducible} $\Psi$-spaces. Let $\widetilde{\phi}_j:U_1\rightarrow U_j$ be an isomorphism from 
$U_1$ to $U_j$. 
{Let $W$ be an irreducible $\Psi$-subspace of $V$. Then, there exist $c_1,\dots c_{\nu} \in \CC$ such that $W=U_{\underline{c}}$, where }
$U_{\underline{c}} := \{\sum_{j=1}^{\nu}c_{j}\widetilde{\phi}_j(u), u\in U_1\}$.
Any $\Psi$-subspace of such a $V$ is a direct sum of modules $U_{\underline{c}}$ as described in the lemma.
\end{lemma}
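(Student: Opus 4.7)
The plan is to proceed in three steps: verify that each $U_{\underline{c}}$ is an irreducible $\Psi$-subspace, establish the converse via Schur's lemma and projections onto the summands $U_j$, then deduce the general assertion from semisimplicity of $V$. Throughout, the key implicit hypothesis is that the $U_j$ are irreducible $\Psi$-spaces, which is the setting motivated by the remark on Schur's lemma that immediately precedes the statement.

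For the first step, I would introduce the $\CC$-linear map $\Phi_{\underline{c}}:U_1\to V$ given by $\Phi_{\underline{c}}(u):=\sum_{j=1}^{\nu}c_j\widetilde{\phi}_j(u)$. Since each $\widetilde{\phi}_j$ is a $\Psi$-isomorphism and $\Psi$ acts $\CC$-linearly, $\Phi_{\underline{c}}$ is a morphism of $\Psi$-spaces whose image is precisely $U_{\underline{c}}$. Provided that some $c_j$ is non-zero, the direct sum decomposition $V=\bigoplus_j U_j$ forces $\Phi_{\underline{c}}$ to be injective, so $U_{\underline{c}}$ is isomorphic to $U_1$ as a $\Psi$-space and is therefore irreducible.

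For the converse, let $W\subset V$ be an irreducible $\Psi$-subspace, and let $\pi_j:V\to U_j$ denote the canonical projections. Each restriction $\pi_j|_W:W\to U_j$ is a $\Psi$-morphism between irreducible $\Psi$-spaces, so Schur's lemma forces it to be either zero or an isomorphism. Since $W\neq\{0\}$, some $\pi_{j_0}|_W$ is an isomorphism. For each $j$, the composite
\[
\widetilde{\phi}_j^{-1}\circ\pi_j\circ(\pi_{j_0}|_W)^{-1}\circ\widetilde{\phi}_{j_0} : U_1\longrightarrow U_1
\]
is a $\Psi$-endomorphism of the irreducible $\Psi$-space $U_1$, and since $\CC$ is algebraically closed, Schur's lemma identifies it with a scalar $c_j\in\CC$ (with $c_j=0$ when $\pi_j|_W$ vanishes). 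Setting $v:=\widetilde{\phi}_{j_0}^{-1}(\pi_{j_0}(u))\in U_1$ for $u\in W$ and using $u=\sum_j\pi_j(u)$, one obtains $u=\sum_j c_j\widetilde{\phi}_j(v)$, so $W\subseteq U_{\underline{c}}$; equality follows since both are non-zero irreducible $\Psi$-subspaces.

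For the last assertion, I would note that $V$, being a direct sum of irreducible $\Psi$-modules, is a semisimple $\Psi$-module; hence any $\Psi$-subspace of $V$ is itself semisimple and decomposes as a direct sum of irreducible $\Psi$-subspaces, each of which is of the form $U_{\underline{c}}$ by the previous step. The main delicate point in this scheme is the converse direction: one must check that the scalars $c_j$ extracted by Schur's lemma are genuinely independent of the particular element of $W$ chosen, so that $W$ really fits inside a single $U_{\underline{c}}$. This is precisely what the endomorphism description above guarantees, so beyond the careful bookkeeping with the fixed isomorphisms $\widetilde{\phi}_j$ and the choice of $j_0$, no further obstacle arises.
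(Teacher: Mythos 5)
Your proof is correct. Note that the paper itself gives no proof of this lemma --- it is quoted from Compoint--Singer (\cite{CoSi98a}, Lemma 2.2) --- so there is nothing to compare against; your argument (injectivity of $\Phi_{\underline{c}}$ for the easy inclusion, Schur's lemma applied to the projections $\pi_j|_W$ and to the resulting endomorphisms of $U_1$ for the converse, and semisimplicity of $V$ for the final assertion) is the standard one and is sound. You were also right to flag and supply the implicit hypothesis that the $U_j$ are \emph{irreducible}, which the statement omits but which is clear from the surrounding discussion and from the way the lemma is applied to the quotients $W^{[k]}/W^{[k-1]}$.
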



We now construct a special flag for an isotypical bloc, called a \emph{$\Psi$-isotypical flag}, adapted to our reduction process. We are going to proceed in two steps. 
\\
First, we construct such a flag in the case of an indecomposable $\Psi$-space $V$. 
Let $U_1$ be an irreducible $\Psi$-subspace of $V$. There may be other subspaces which are $\Psi$-isomorphic to $U_1$; let $n_1$ be the maximal number of $\Psi$-subspaces of $V$ whose direct sum is a subspace of $V$ and which are all isomorphic to $U_1$. 
    {
    We define $V^{[1]}$ as this direct sum, so that $V^{[1]}\simeq n_1U_1 \subset V$. 
}
    If $V^{[1]}=V$ we are done. Otherwise, we look at the quotient $V/V^{[1]}$ and apply the same construction: we obtain a
     subspace $\tilde{W}_2:=n_2 U_2$ of $V/V^{[1]}$. Note that if $E_{1},\dots,E_{n}$ denotes a basis of $V$ such that $E_{1},\dots,E_{k}$ is a basis of $V^{[1]}$, then  $V/V^{[1]}$ is isomorphic, as a vector space, to $\mathrm{Vect}_{\CC}(E_{k+1},\dots,E_{n})\subset V$. Then,  we may lift  $\tilde{W}_2$ to a $\Psi$-subspace $V^{[2]}$ of $V$ which contains $V^{[1]}$ and such that $V^{[2]}/V^{[1]}= \tilde{W}_2$. 
      We iterate and the result of this construction is what we call a \emph{$\Psi$-isotypical flag}:
     \[
     V= V^{[\mu]} \supsetneq V^{[\mu-1]}\supsetneq \cdots \supsetneq V^{[1]} \supsetneq V^{[0]}=\{0\},
     \] 
     such that 
     each $V^{[j]}/V^{[j-1]}$
     is a direct sum of  pairwise isomorphic irreducible $\Psi$-spaces. 
     \\\par 
        
{We  now define a $\Psi$-isotypical flag for an  isotypical block $W$ among the $W_i$ given 
by the isotypical decomposition $\glsub =\displaystylee \bigoplus_{i=1}^{\kappa} W_{i}$ of Proposition \ref{propo3}.  }
\\
{ Let $W$ denote an isotypical block in the isotypical decomposition of $\glsub$.} We have a decomposition $W=\displaystylee \bigoplus_{j=1}^{\nu} V_{j}$, 
where the $(V_{j})_{1\leq j\leq \nu}$
 are pairwise isomorphic indecomposable $\CR$-modules. 
 We first construct, as above,  a $\Psi$-isotypical flag for $V_{1}$: 
 \[V_{1}= V_{1}^{[\mu_{1}]} \supsetneq V_{1}^{[\mu_{1}-1]}\supsetneq \cdots \supsetneq V_{1}^{[1]} \supsetneq V_{1}^{[0]}=\{0\},
 \] 
 such that each
 $V_{1}^{[k]}/V_{1}^{[k-1]}$ is a direct sum of several pairwise isomorphic irreducible $\Psi$-spaces. 
 \\ 
 Now let $\phi_j: V_{1}\rightarrow V_{j}$ denote an  $\CR$-module isomorphism. 
 We set
 $V_{j}^{[k]}:=\phi_j(V_{1}^{[k]})$ for all $k$ and this defines a $\Psi$-isotypical flag for $V_{j}$.
 Now, we define $W^{[k]}$ by {$W^{[k]}:=\displaystylee \bigoplus_{j=1}^{\nu} V_{j}^{[k]}$}.
  
 \begin{definition} \label{isotypical-flag}
 Let $W$ denote an isotypical block in the isotypical decomposition of $\glsub$.
 The flag 
	\[
     W= W^{[\mu]} \supsetneq W^{[\mu-1]}\supsetneq \cdots \supsetneq W^{[1]} \supsetneq W^{[0]}=\{0\},
     \]
     constructed above is called an \emph{isotypical flag} (or $\Psi$-isotypical flag) for $W$.
 \end{definition}

     We now discuss how to compute such a $\Psi$-isotypical flag. We have explained how to compute the isotypical decomposition so we start by computing the $\Psi$-isotypical flag of an indecomposable $\Psi$-space. We thus need to be able to solve the  following two problems:
\begin{enumerate}
\item[\textbf{P1:}]{Given a $\Psi$-space $V$, find an irreducible $\Psi$-subspace $U\subseteq V$}.
\item [\textbf{P2:}] If $U\subset V$ is an irreducible  $\Psi$-subspace, determine the maximal $n\in \mathbb{N}^*$ such that $V$ {contains} 
a direct sum of $n$ subspaces $\Psi$-isomorphic to $U$, i.e. 
$nU \subseteq V$.
\end{enumerate}

{In order to address Problem \textbf{P1}, we will use the following notion.}

\begin{definition} \label{associated-psi-space}
Given a subspace $\CW$ of $\glsub(\kk)$ stable under $\Psi$, its \emph{associated $\Psi$-space} is
the smallest $\Psi$-subspace  $W$ of $\glsub$ such that
$\CW \subset W\otimes_\CC \kk$.
\end{definition}
Computation of the $\Psi$-space $W$ associated to $\CW$
can be achieved as follows. 
\begin{trivlist}
\item  \textbf{Input}: $\CW$, a subspace of $\glsub(\mathbf{k})$.
\item \textbf{Output}: the associated $\Psi$-space $W$.
\begin{enumerate}
\item For each element of a basis of $\CW$, compute its Wei-Norman decomposition.
\item Compute a basis $\CB$ of the orbits under $\mathcal{R}$ of all elements of these Wei-Norman decompositions.
\item The $\Psi$-space $W$ is the vector space generated by $\CB$.
\end{enumerate}
\end{trivlist}
Let $V$ denote a $\Psi$-subspace of $\glsub$. We now show how to find an irreducible $\Psi$-subspace $U\subseteq V$.

We apply the eigenring method for the isotypical decomposition as above; we let $U$ be an indecomposable subspace in the decomposition (or $U=V$ if $V$ is indecomposable).
We identify $\Psi$ with its restriction to $U\otimes_{\CC} \mathbf{k}$ in this paragraph. 
Let $\chi_\Psi(\lambda)$ denote its characteristic polynomial. 
If $\tilde{U}$ is an irreducible $\Psi$-subspace of $U$, then 
the characteristic polynomial of the restriction of $\Psi$ to $\tilde{U}\otimes_{\CC} \mathbf{k}$ divides $\chi_\Psi(\lambda)$. We compute a factorization
${\chi_\Psi(\lambda) = f_1(\lambda)^{m_1} \cdots f_d(\lambda)^{m_d}}$
where the $f_i$ are pairwise coprime irreducible polynomials over $\mathbf{k}$.
For each $i$, we compute $E_i := \ker( f_i(\Psi) )$; then we compute the $\Psi$-space\footnote{Note that $E_i$ is a vector space over $\mathbf{k}$ whereas $W_i$ is a vector space over $\CC$; even though $E_i$ is an irreducible subspace of $U\otimes_{\CC} \mathbf{k}$,  the space $W_i$ may still be a reducible $\Psi$-space: see the $B3\times B2$ Example in $\S\ref{B3xB2}$. } $W_i \subset \glsub$ associated to $E_i$. 
If all $W_i$ are equal to $U$ then $U$ is an irreducible $\Psi$-space
and we return $U$. Otherwise, pick a $W_i$ of minimal dimension and repeat the above steps with $W_i$ in place of $U$ (eigenring, generalized eigenspaces, $\Psi$-space).
\par
The dimension  decreases strictly at each step so the process terminates and produces an irreducible $\Psi$-subspace $U\subseteq V$.
 \begin{trivlist}
\item  \textbf{Input}: $V$, a $\Psi$-subspace of $\glsub$.
\item \textbf{Output}: an irreducible $\Psi$-space $U\subset V$.
\begin{enumerate}
\item Compute an isotypical decomposition. Let $U$ denote one of the indecomposable subspaces.
\item Factor the characteristic polynomial of $\Psi$ on $U\otimes_{\CC} \mathbf{k}$: 
$\chi_\Psi(\lambda) = f_1(\lambda)^{m_1} \cdots f_d(\lambda)^{m_d} $
where the $f_i$ are coprime irreducible polynomials.
\item Compute the $E_i:=\ker\left(f_i(\Psi)\right)
$ and the associated $\Psi$-space $W_i$.
\item If all $W_i$ are equal to $U$, then $U$ is irreducible: return $U$.
\\
Otherwise, apply recursively this procedure to a $W_i$ of minimal dimension. 
\end{enumerate}
\end{trivlist}

Now let us consider Problem \textbf{P2}.
Assume that we have found an irreducible $\Psi$-subspace $U$  of $V$. 
  Finding $\CR$-submodules of $V$ which are $\Psi$-isomorphic to $U$ amounts to finding elements in 
     $\mathrm{Hom}_{\mathcal R}(U, V)$.
Let  $s$ be the dimension of $U$ and $d$ be the dimension of $V$.  We have $s\leq d$. The matrix of $\Psi$ restricted to $V$ is of the form
\[ \Psi|_V = 
	\left(\begin{array}{c|c} \Phi & \star  \\\hline 
0 & \star \end{array}\right),
 \]
 where $\Phi$ is a square matrix of size $s$ representing $\Psi|_U$. 
 Let $\Psi_j$  denote  the constant matrices in a 
 Wei-Norman decomposition of $\Psi|_V$. They induce matrices  $\Phi_{j}$  that generate a
 Wei-Norman decomposition of $\Phi$. Elements of 
     $\mathrm{Hom}_{\mathcal R}(U, V)$ are represented by matrices 
     $L \in \CM_{d \times s}(\CC)$ such that, for all $j$, 
     we have $\Psi_j\cdot L  =L\cdot  \Phi_j   $.
     This gives a linear system of equations for the entries of $L$.
     Once a basis $L_1, \ldots, L_m$ of these $L$ is found, we let
     \[ P:= \left(
\begin{array}{c|c|c|c}   
&&&\\ 
    L_1 &\dots & L_m &0\\
    
  &&&\mathrm{Id}_{d-sm}   \end{array} 
     \right). \]
     The conjugation  given by $P^{-1}\Psi|_V  P$
	puts $\Psi|_V $ in a form where the north-west block is a direct sum of $m$ copies of $\Phi$.

 \subsection{Examples of Decomposition}\label{sec:ex}
In this subsection, we compute the isotypical decomposition and the flags  with the desired properties in several examples.  
{A Maple worksheet\footnote{The reader may also find a pdf version at\\ \url{http://www.unilim.fr/pages_perso/jacques-arthur.weil/DreyfusWeilReductionExamples.pdf}} with these examples may be found at \cite{DrWe20a}.}
In what follows, the $E_{i,j}$ are the elementary matrices forming the canonical basis of $\mathcal{M}_n$, i.e. $E_{i,j}$ has a $1$ on the $(i,j)$ entry and $0$ elsewhere.  
We first focus on the isotypical decomposition ; we will first expose our reduction technique on these examples as we believe that it may help the reader when we establish the theory in $\S \ref{sec4}$. In each of the five examples below, we compute the isotypical decomposition using only the block-diagonal part of systems which will be fully be written down in $\S \ref{sec:exred}$. 

\subsubsection{The ``$SO_3\times SL_2$'' Example.}\label{SO3xSL2}

We consider a system whose diagonal part is given by 
$$ A_{diag} (x) := 
\left( \begin {array}{ccc|cc} 0&1&x&0&0\\ -1&0&0&0&0
\\ -x&0&0&0&0\\ \hline 0&0&0&0&1
\\0&0&0&-x&0\end {array} \right) . 
$$
This matrix is the block-diagonal part of the system studied later in Section~\ref{SO3xSL2-continued}.
The diagonal blocks are in the Lie algebras $\mathfrak{so}_3$ of the 3-dimensional special orthogonal group  and 
$\mathfrak{sl}_2$ of the special linear group.

The matrix $A_{diag} (x)$ is in reduced form 
and its associated Lie algebra is of dimension $6$, as we may see using \cite{ApCoWe13a,BaClDiWe16a}.
In this example and the following one, $(B_{i})_{1\leq i\leq n_1 n_2}$, denotes the canonical basis of $\glsub$, i.e. in this example $B_1:=E_{4,1}$, $B_2:=E_{4,2}$, $B_3:=E_{4,3}$, $B_4:=E_{5,1}$, $B_5:=E_{5,2}$, $B_6:=E_{5,3}$.

Using  this basis $\{ B_1, \ldots, B_6\}$ of $\glsub$, 
we find the corresponding matrix of the adjoint action, given by
$$ \Psi = 
\left(
\begin {array}{cccccc} 0&1&x&1&0&0\\  -1&0&0&0
&1&0\\  -x&0&0&0&0&1\\  -x&0&0&0&1&x
\\  0&-x&0&-1&0&0\\  0&0&-x&-x&0&0
\end {array}
 \right).
$$
The eigenring contains only the identity, which shows  that $\glsub$ is $\Psi$-indecomposable.
The characteristic polynomial of $\Psi$ has two factors.
$$\chi_{\Psi}(\lambda) =  \left( {\lambda}^{2}+x \right)  \left( {\lambda}^{4}+2\,{\lambda}^{2}
{x}^{2}+{x}^{4}+2\,{\lambda}^{2}x-2\,{x}^{3}+2\,{\lambda}^{2}+3\,{x}^{
2}-2\,x+1 \right).
$$

The corresponding generalized eigenspaces (over $\mathbf{k}$) are:
\[
E_1= \left<  \left( \begin {array}{c} 0\\  0
\\  0\\  0\\  -x
\\  1\end {array} \right) , \left( \begin {array}{c} 0
\\  -x\\  1\\  0
\\  0\\  0\end {array} \right) 
 \right> 
 \textrm{ and }
 E_2 = \left<  \left( \begin {array}{c} 0\\  0
\\  0\\  0\\ 
  \frac{1}{x}\\  1\end {array} \right) , \left( \begin {array}
{c} 0\\  0\\  0\\  
1\\  0\\  0\end {array} \right) ,
 \left( \begin {array}{c} 0\\  \frac{1}{x}
\\  1\\  0\\  0
\\  0\end {array} \right) , \left( \begin {array}{c} 
1\\  0\\  0\\  0
\\  0\\  0\end {array} \right) 
 \right> .
\]
For each of them, the associated $\Psi$-space, see Definition \ref{associated-psi-space}, is the whole $\glsub$.
We conclude that $\glsub$ is $\Psi$-irreducible. 
The ``flag'' only has one level in this case:

\begin{center}\begin{tikzpicture}
\draw[rounded corners] (-2.5, -0.5) rectangle (2.5, 0.5);
\draw (-3.5,0) node{$\glsub$} ;
\draw (-3,0) node{$=$} ;
\draw (0,0) node{$\langle B_{1},B_2,B_3,B_4,B_5,B_6 \rangle$} ;
\end{tikzpicture}\end{center}

\begin{remark}
In the spirit and notations of Remark \ref{psi-reduit}, we note that $\CM_1$ and $\CM_2$ are irreducible modules so $\CM_1^{\star}\otimes \CM_2$ is a completely reducible module. 
As $\glsub$ is an indecomposable $\Psi$-space, this shows that it is actually irreducible.
\end{remark}

\subsubsection{The ``$SO_3\times B_2$'' Example.}\label{SO3xB2}

Consider the system $[A_{diag}]$ given by:
$$ A_{diag} (x) := \left( \begin {array}{ccc|cc} 0&1&x&0&0\\ -1&0&0&0&0
\\ -x&0&0&0&0\\ \hline 0&0&0&x&1
\\  0&0&0&0&-x\end {array} \right) 
= \left( \begin {array}{c|c} A_1(x) & 0 \\ \hline 0 & A_2(x) \end {array} \right).
$$
The diagonal blocks are respectively in the Lie algebras $\mathfrak{so}_3$ of the 3-dimensional special orthogonal group  and 
 in the Lie algebra 
	$$\mathfrak{b}_2 :=\mathrm{Vect}_{\CC}\left\{ 
		\left(\begin{array}{cc}1 & 0 \\0 & -1 \end{array}\right) , 
		\left(\begin{array}{cc}0 & 1 \\0 & 0\end{array}\right) \right\}$$ of the two-dimensional Borel group.
The matrix $A_{diag} (x)$ is in reduced form as we may see using \cite{ApCoWe13a,BaClDiWe16a}. 
\\
Using the canonical basis of $\glsub$ as previously, the matrix for the adjoint action 
${\Psi=[\adiag,\bullet]}$, acting on $\glsub$, is
	$$ \Psi = \left(
\begin {array}{ccc|ccc} 
	x&1&x&1&0&0\\ 
	-1&x&0&0 &1&0\\ 
	-x&0&x&0&0&1\\ 
\hline	0&0&0&-x&1&x \\ 
	0&0&0&-1&-x&0\\ 
	0&0&0&-x&0&-x
\end {array} 
 \right)  =: \Psi_0 + x \Psi_1.
$$
As above, we let $\CR:=\CC[\Psi_0,\Psi_1]$. Computation shows that the eigenring $\text{End}_\CR(\glsub)$ is spanned by the identity.
So $\glsub$ is $\Psi$-indecomposable.
Looking at the matrix $\Psi$, we immediately see that the space spanned by the first three vectors is a $\Psi$-space. Let us recover that 
{using the algorithm in 	\S \ref{ondecoupe}
}
 to illustrate the method.\par

If a subspace {$V\subset \glsub$} is a $\Psi$-space in $\glsub$ then $V\otimes_{\CC} \mathbf{k}$ is invariant. Such an invariant subspace is found from the generalized eigenspaces of $\Psi$.
The characteristic polynomial $\chi_{\Psi}(\lambda)$ of $\Psi$ has four factors
$f_1(x)=\left( \lambda -x \right)$, 
$f_2(x) = \left( \lambda+x \right)$ , 
$f_3(x) = \left( {\lambda}^{2}-2\,\lambda\,x+2\,{x}^{2}+1 \right)$ 
and 
$f_4(x) = \left( {\lambda}^{2}+2\, \lambda\,x+2\,{x}^{2}+1 \right)$.
The corresponding generalized eigenspaces in 
{$\glsub\otimes_{\CC} \mathbf{k}$}
are respectively 
\[
\begin{array}{ll}
E_1 = \left<  \left( \begin {array}{c} 0\\  -x
\\  1\\  0\\  0
\\  0\end {array} \right) \right> , &
E_2=\left<
 \left( \begin {array}{c} 0\\  \frac12
\\  -\,\frac{1}{2x}\\  0
\\  -x\\  1\end {array} \right) 
 \right> ,\\
 E_3= \left< \left( \begin {array}{c} 0\\  \frac1x \\  1\\  0
\\  0\\  0\end {array} \right),
 \left( \begin {array}{c} 1\\  0\\  0
\\  0\\  0\\  0
\end {array} \right)  \right>, & E_4= \left< \left( \begin {array}{c} 0\\  \frac{-1}{2x^2} \\ \frac{-1}{2x}\\  0
\\  \frac{1}{x}\\  1\end {array} \right),
 \left( \begin {array}{c} \frac{-1}{2x}\\  0\\  0
\\  1\\  0\\  0
\end {array} \right)  \right> .
\end{array}
\]
We compute the smallest subspace $\overline{V}_1$ of $\glsub$ such that $E_1=\overline{V}_1\otimes_\CC \mathbf{k}$: it is found from a Wei-Norman decomposition of the generator of $E_1$. 
Now we let $V_1$ be the orbit of $\overline{V}_1$ under $\CR$. We find that 
$V_1=<B_1, B_2, B_3>$. Proceeding similarly with $E_2$, $E_3$ and $E_4$, we find respectively $V_3=V_1$ and $V_2=V_4=\glsub$.
 Note that $V_1\otimes_\CC \mathbf{k} = E_1 \oplus E_3$.
 As the dimension of $V_1$ is minimal, it is $\Psi$-irreducible.
 
 We let $B_i^{[1]}:=B_i$, for $i=1,2,3$, $W^{[1]}:=V_1$ and then 
$B_1^{[2]},B_2^{[2]},B_3^{[2]} = B_4,B_5,B_6 $
 to obtain the flag $\glsub = W^{[2]}\supsetneq W^{[1]}   \supsetneq \{0\}$:

\begin{center}
\begin{tikzpicture}
\draw[rounded corners] (-1.75, -2) rectangle (1.75, 2);
\draw[thick,->](0,0.5)--(0,-0.5);
\draw (0,1) node[draw,rounded corners]{$\langle B_1^{[2]}, B_2^{[2]}, B_3^{[2]} \rangle$} ;
\draw (0,-1) node[draw,rounded corners]{$\langle B_1^{[1]}, B_2^{[1]}, B_3^{[1]} \rangle$} ;
\draw (-2.5,0) node{$\glsub$} ;
\draw (-2,0) node{$=$} ;
\end{tikzpicture}
\end{center}

This example is continued in \S \ref{SO3xB2-continued}.

\subsubsection{The ``$B_3\times B_2$'' Example.}\label{B3xB2}

Let us consider
$$ A_{diag} (x) := 
\left( \begin{array}{ccc|cc} 1&x&0&0&0\\ 0&-x-1&0&0
&0\\ 0&0&x&0&0\\ \hline 0&0&0&x&1
\\ 0&0&0&0&-x\end{array} \right) .
$$
The associated Lie algebra has dimension $4$ and it turns out that this system is in reduced form.
First, the diagonal $\mathrm{Diag}(1,-x-1,x,x,-x)$ is in reduced form : its associated Lie algebra has dimension $2$ while the associated Picard-Vessiot extension is generated over $\C(x)$ by $e^x$ and $e^\frac{x^2}{2}$, which are algebraically independent ; 
the remaining reduction 
(applying the full algorithm at the end of this paper) is a simple integration exercise\footnote{This gives,  in turn,  a  proof that $e^x$, $e^\frac{x^2}{2}$, $\int^x{e^\frac{t^2}{2}}\,{\rm d}t$ 
and $\int^x {{\rm e}^{- \frac12\left( t+2 \right) ^{2}}}
\,{\rm d}t$ are algebraically independent.
}
.

The matrix of the adjoint action $\Psi = [\adiag,\bullet]$ in the canonical basis is
$$ \Psi = 
\left( \begin {array}{cccccc} x-1&0&0&1&0&0\\  -x&2
\,x+1&0&0&1&0\\  0&0&0&0&0&1\\  0&0&0
&-x-1&0&0\\  0&0&0&-x&1&0\\  0&0&0&0
&0&-2\,x\end {array} \right) .
$$
Using the eigenring decomposition algorithm from \S \ref{ondecoupe},
we find a decomposition ${\glsub=W_1\oplus W_2}$ as a direct sum of two indecomposable subspaces $W_i$ of respective dimensions $4$ and $2$.
The restrictions of $\Psi$ to these subspaces have respective matrices
\[ 
\Psi|_{W_1} = 
\left( 
\begin {array}{cccc} 1&-x&0&0\\  0&-1-x&0&0
\\  1&0&2\,x+1&-x\\  0&1&0&x-1
\end {array}
\right)
\textrm{ and } 
\Psi|_{W_2} = 
\left(
 \begin {array}{cc} 0&1\\  0& -2x\end {array}
 \right).
\]

We have $W_{1}=\langle C_1,C_2,C_3,C_4 \rangle$ and $W_{2}=\langle C_5,C_6 \rangle$, with $C_1=B_5$, $C_2=B_4$, $C_3=B_2$, $C_4=B_1$, $C_5=B_6$, $C_6=B_3$.
The characteristic polynomial of $\Psi|_{W_1}$ is $\left( \lambda-1 \right)  \left( -2\,x-1+\lambda \right)  \left( -x+1
+\lambda \right)  \left( x+1+\lambda \right)
$. 
For the factor $f_1(\lambda):= \lambda -2\,x-1$, the eigenspace is 
$V_1:= E_1 = \left< C_3\right>.$
It has a constant basis and hence its generator spans a $\Psi$-space.
For the factor $f_2(\lambda):= \lambda-1$, we have 
$ E_2 = \left< -2\,x\,C_1+C_3 \right>.$
The associated $\Psi$-space is 
\[V_2:=\left< C_1, C_3
\right>.\]
{Note that $V_2$ is a \emph{reducible} $\Psi$-space, even though $E_2$ was an irreducible $\mathbf{k}[\Psi]$-module:
}
we have $V_1 \subsetneq V_2$.
Continuing in this way, we find a basis for the flag on $W_1$ : 
\[ B_1^{[1]}:=C_3, B_1^{[2]}:=C_1, B_1^{[3]}:=C_3+C_4, B_1^{[4]}:=C_2. \]
Similarly, the flag on $W_2$ is given by $B_1^{[2]}:=C_5$, $B_1^{[1]}:=C_6$.
The matrix of $\Psi$ in this new basis is 
\[ \left( \begin {array}{cccc|cc} 
	2\,x+1&1&-x&0&0&0\\ 
	0&1&0&-x&0&0\\ 
	0&0&x-1&1&0&0\\ 
	0&0&0 &-x-1&0&0\\ \hline
0&0&0&0&0&1\\ 
0&0&0&0&0
&-2\,x\end {array} \right).
\]
To summarize, our isotypical flag\footnote{We stress the fact that there is another possible choice of flag in this example.  All choices are equivalent, by the Krull-Schmidt theorem, see Proposition \ref{propo3}, so our choice is essentially cosmetic but does not influence the complexity of the computations.
} 
$\glsub=W_1 \oplus W_2$ in this ``$B_3\times B_2$'' example is given by: 

\begin{center}
\begin{tikzpicture}[scale=0.8]
\draw[rounded corners] (4.5, -1.75) rectangle (7, 1.75);
\draw (5.75,-0.75) node[draw,rounded corners]{$\langle B_{1}^{[1]} \rangle$} ;
\draw (5.75,0.75) node[draw,rounded corners]{$\langle B_{1}^{[2]} \rangle$} ;
\draw[thick,->](5.75,0.25)--(5.75,-0.25);

\draw[rounded corners] (-0.5, -3.25) rectangle (2, 3.25);
\draw[thick,->](0.75,1.75)--(0.75,1.25);
\draw[thick,->](0.75,0.25)--(0.75,-0.25);
\draw[thick,->](0.75,-1.25)--(0.75,-1.75);
\draw (0.75,-2.25) node[draw,rounded corners]{$\langle B_{1}^{[1]} \rangle$} ;
\draw (0.75,-0.75) node[draw,rounded corners]{$\langle B_{1}^{[2]} \rangle$} ;
\draw (0.75,0.75) node[draw,rounded corners]{$\langle B_{1}^{[3]} \rangle$} ;
\draw (0.75,2.25) node[draw,rounded corners]{$\langle B_{1}^{[4]} \rangle$} ;
\draw (-1.5,0) node{$W_1$} ;
\draw (-1,0) node{$=$} ;
\draw(3.5,0) node{$W_2$};
\draw (4,0) node{$=$} ;
\end{tikzpicture}\end{center}

\subsubsection{A nilpotent example}\label{nilpotent-example}

Let us consider
$$ \begin{large}A_{diag} (x) := 
\left( \begin {array}{cccc|cccc} 
	1&0&\frac{1}{x}&0&0&0&0&0
	\\  \frac{1}{x-1}&1&0&-\frac{1}{x}&0&0&0&0
	\\ 0&0&1&0&0&0&0&0
	\\ 0&0& \frac{1}{x-1} &1&0&0&0&0
	\\ \hline 0&0&0&0&1&0&\frac{1}{x}&0
\\0&0&0&0& \frac{1}{x-1}&1&0&-\frac{1}{x}
\\0&0&0&0&0&0&1&0\\0&0&0&0&0&0&
 \frac{1}{x-1}&1\end {array} \right).\end{large}
$$
As shown in  the maple worksheet, see \cite{DrWe20a}, or direct computation, see \cite{dreyfus2021differential}, Example~3, the system is in reduced form.
The matrix $\Psi$ of the adjoint action in the canonical basis of $\glsub$ 
is $\Psi:=\frac{1}{x} \Psi_0 + \frac{1}{x-1} \Psi_1$, see the worksheet \cite{DrWe20a}. 
 It turns out that $\Psi$ is nilpotent and that its minimal polynomial is
$\chi_{\Psi}(\lambda) = \lambda^3.$  The eigenring has dimension $32$. 
The eigenring decomposition algorithm provides a decomposition of $\glsub$ as a direct sum of three indecomposable $\Psi$-spaces $W_{1},W_{2},W_{3}$ of respective dimensions $1$, $5$, and $10$.

\begin{figure}[h!]
\begin{tikzpicture}[scale=0.85]
\draw (8,-3) node[draw,rounded corners]{$\langle B_{1}^{[1]} \rangle$} ;
\draw (6,-1.5) node[draw,rounded corners]{$\langle B_{1}^{[2]} \rangle$} ;
\draw (10,-1.5) node[draw,rounded corners]{$\langle B_{2}^{[2]} \rangle$} ;
\draw (5,0) node[draw,rounded corners]{$\langle B_{1}^{[3]} \rangle$} ;
\draw (7,0) node[draw,rounded corners]{$\langle B_{2}^{[3]} \rangle$} ;
\draw (9,0) node[draw,rounded corners]{$\langle B_{3}^{[3]} \rangle$} ;
\draw (11,0) node[draw,rounded corners]{$\langle B_{4}^{[3]}  \rangle$} ;
\draw (6,1.5) node[draw,rounded corners]{$\langle B_{1}^{[4]}  \rangle$} ;
\draw (10,1.5) node[draw,rounded corners]{$\langle B_{2}^{[4]} \rangle$} ;
\draw (8,3) node[draw,rounded corners]{$\langle B_{1}^{[5]} \rangle$} ;
\draw[rounded corners] (3.5,-4) rectangle (12.5, 4);
\draw (3.5,-4) node [above right]{$W_3$};

\draw (6,0) node{$\oplus$} ;
\draw (8,0) node{$\oplus$} ;
\draw (10,0) node{$\oplus$} ;
\draw (8,1.5) node{$\oplus$} ;
\draw (8,-1.5) node{$\oplus$} ;

\draw[rounded corners] (5,1) rectangle (11, 2);
\draw[rounded corners] (4,-0.5) rectangle (12, 0.5);
\draw[rounded corners] (5,-1) rectangle (11, -2);
\draw[thick,->](8,2.5)--(8,2);
\draw[thick,->](8,1)--(8,0.5);
\draw[thick,->](8,-0.5)--(8,-1);
\draw[thick,->](8,-2)--(8,-2.5);

\draw (-1,-1.5) node[draw,rounded corners]{$\langle B_{1}^{[1]} \rangle$} ;
\draw (1,-1.5) node[draw,rounded corners]{$\langle B_{2}^{[1]} \rangle$} ;
\draw (0,0) node[draw,rounded corners]{$\langle B_{1}^{[2]} \rangle$} ;
\draw (-1,1.5) node[draw,rounded corners]{$\langle B_{1}^{[3]} \rangle$} ;
\draw (1,1.5) node[draw,rounded corners]{$\langle B_{2}^{[3]} \rangle$} ;
\draw[rounded corners] (-2.5,-2.5) rectangle (2.5, 2.5);
\draw (-2.5,-2.5) node [above right]{$W_2$};
\draw (0,1.5) node{$\oplus$} ;
\draw (0,-1.5) node{$\oplus$} ;
\draw[rounded corners] (-2,1) rectangle (2, 2);
\draw[rounded corners] (-2,-1) rectangle (2, -2);
\draw[thick,->](0,1)--(0,0.5);
\draw[thick,->](0,-0.5)--(0,-1);

\draw (-4,0) node[draw,rounded corners]{$\langle B_{1}^{[1]} \rangle$} ;
\draw (-4.7,-0.5) node [below right]{$W_1$};

\draw (-2.9,0) node{$\oplus$} ;
\draw (3,0) node{$\oplus$} ;
\end{tikzpicture}

\caption{The isotypical flag of the nilpotent example.
}
\end{figure}
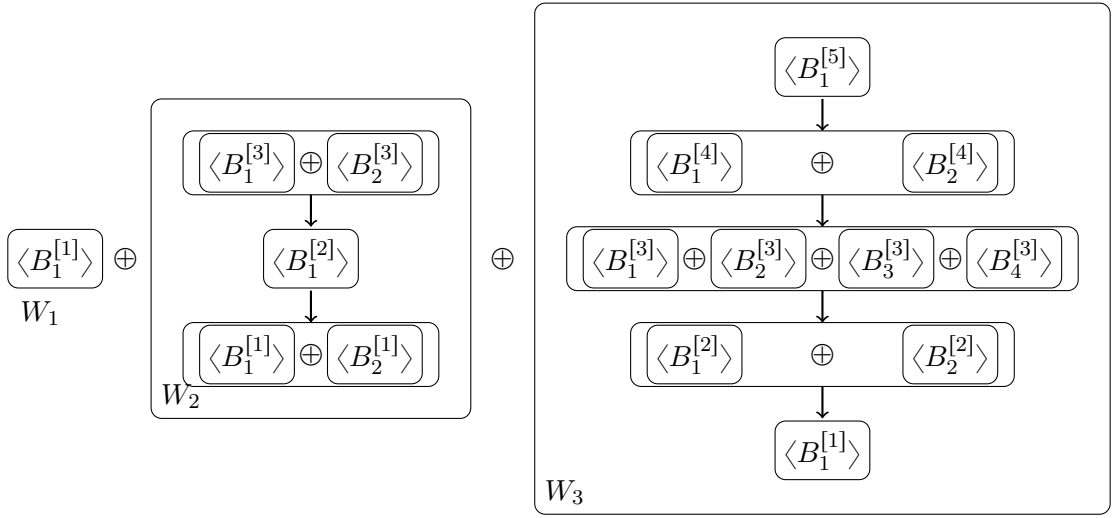

The matrix of $\Psi$ acting on the $5$-dimensional block $W_{2}$  is given in \cite{DrWe20a}.
The flag reduction method provides a new basis on which the matrix of $\Psi|_{W_2}$ is 
\[
\Lambda_2 =  \left(  \begin {array}{cc|c|cc} 
	0&0&\frac{1}{x-1}&0&0
	\\ 0&0&\frac{1}{x}&0&0
	\\ \hline  0&0&0& \frac{1}{x} &\frac{1}{x-1}
	\\  \hline 0&0&0&0&0
	\\ 0&0&0&0&0\end {array} \right)
\]

We note that, although $W_{2}$ is an indecomposable $\Psi$-space, it has decomposable quotients and subspaces, namely
$W_{2}  / \mathrm{Vect}_{\CC}\{ B_1^{[2]} ,B_1^{[1]},  B_2^{[1]} \}$ and 
	$\mathrm{Vect}_{\CC}\{ B_1^{[1]},  B_2^{[1]}\}$ 
	are decomposable.

Similarly, the 
flag reduction method provides  a new basis on which the matrix of $\Psi|_{W_3}$ is 
\[
\Lambda_3= \left( \begin {array}{c|cc|cccc|cc|c}
0&\frac{1}{x}& \frac{1}{x-1}&0&0&0&0&0&0&0
\\ \hline 0&0&0&\frac{1}{x}&- \frac{1}{x-1}&0&0&0&0&0
\\ 0&0&0&0&0&-\frac{1}{x}&
 	\frac{1}{x-1}&0&0&0
 \\ \hline 0&0&0&0&0&0&0&
 	\frac{1}{x-1}&0&0
 \\ 0&0&0&0&0&0&0&0&
 	\frac{1}{x-1}&0
 \\ 0&0&0&0&0&0&0&\frac{1}{x} &0&0
 \\ 0&0&0&0&0&0&0&0&\frac{1}{x}&0
\\ \hline 0&0&0&0&0&0&0&0&0& \frac{1}{x-1}
\\ 0&0&0&0&0&0&0&0&0&\frac{1}{x}
\\ \hline 0&0 &0&0&0&0&0&0&0&0
\end {array} \right).
 \]

\section{Examples of Reduction on an Isotypical Flag}\label{sec:exred}

As the next part of the  algorithm is a bit technical and may be cumbersome to read, we start by performing our reduction technique on the above four examples. 
They are presented in increasing order of  complexity. They are chosen so that the phenomena can be better understood before stating the general reduction procedure.

\subsection{The ``$SO_3\times SL_2$'' Example Continued.} \label{SO3xSL2-continued}
This example is our simplest. Note that the Berman-Singer algorithm \cite{BeSi99a,Be02a} applies to this example (and leads to the same conclusion). Let 
\[ 
	A(x):= 
\left( \begin {array}{ccc|cc} 0&1&x&0&0\\ -1&0&0&0&0
\\ -x&0&0&0&0\\ 
\hline 
 -\frac{1}{x} + \frac{1}{x-1} & 1-\frac{1}{x^2} & x 
&0&1
\\ 
x+ \frac{1}{(x-1)^2} & 1-\frac{1}{x-1} & -1-\frac{1}{x-1}
&-x&0\end {array} \right). \; 
\]
 In $\S\ref{SO3xSL2}$, we have seen that $\glsub$ has dimension $6$ and is $\Psi$-irreducible. 
We recall the notation $\gsub:=\glsub \cap \glie$.
 Since $\glsub$  has no proper $\Psi$-space here, 
we have either $\gsub=\glsub$ 
 (in which case $A(x)$ is already in reduced form) or $\gsub=\{0\}$ 
 (in which case $A_{diag} (x)$ is a reduced form of $[A(x)]$, as we had assumed that $\adiag(x)$ was in reduced form).
 
We look for a reduction matrix of the form 
$P=\id + \sum_{i=1}^6 f_i(x) B_i$ such that ${P[A]=\adiag}$. Writing down this equality, see Proposition \ref{propo2}, we find that the vector of coefficients 
	$\vec{F}:=\left( f_1(x), \ldots, f_6(x) \right)^T$ must be a rational solution of the system  
\[ Y'=\Psi.Y + \vec{b}, \; \textrm{ where } \; \vec{b}:=  \left( \begin {array}{c} 
-\frac{1}{x} + \frac{1}{x-1}
\\ 
1-\frac{1}{x^2}\\ 
 x
\\ 
x+\frac{1}{(x-1)^2}\\
1-\frac{1}{x-1}\\ 
-1- \frac{1}{x-1}
 \end {array} \right).
 \]

Using the \textsc{Maple} implementation of the Barkatou algorithm in the package \texttt{IntegrableConnections}\footnote{The Maple command is \texttt{RationalSolutions([Psi],[x],['rhs',[B]]);}} from \cite{IntegrableConnections}, we find a unique rational solution
$$ \vec{F}= \left( \begin {array}{c} 1\\   \frac{1}{x}
\\  0\\  -\frac{1}{x-1}
\\  0\\  0\end {array} \right)
$$  
and it follows that $\adiag (x)$ is a reduced form of $[A(x)]$ with reduction matrix equal to 
$$P(x):= 
\left( \begin {array}{ccc|cc} 1&0&0&0&0\\ 0&1&0&0&0
\\ 0&0&1&0&0\\ 
\hline 
1 & \frac{1}{x} &0
&1&0
\\ 
- \frac{1}{x-1} & 0& 0&0&1\end {array} \right).$$

\subsection{The ``$SO_3\times B_2$'' Example, Continued} \label{SO3xB2-continued}
We continue with the example from $\S\ref{SO3xB2}$.
$$ A_{diag} (x) := \left( \begin {array}{ccc|cc} 0&1&x&0&0\\ -1&0&0&0&0
\\ -x&0&0&0&0\\\hline  0&0&0&x&1
\\ 0&0&0&0&-x\end {array} \right), \; A_{sub} (x) := \left( \begin {array}{ccc|cc} 0&0&0&0&0\\ 0&0&0&0&0
\\ 0&0&0&0&0
\\ \hline x+3&0&-1&0&0
\\ -{x}^{2}-3\,x&{x}^{2}&{x}^{2}+1&0&0
\end {array} \right).
$$
In $\S\ref{SO3xB2}$, we have found that $\glsub$ has dimension $6$, that it is indecomposable, that it admits only one proper subspace and 
a flag $\glsub = W^{[2]}\supsetneq W^{[1]}   \supsetneq \{0\}$:
\begin{center}
\begin{tikzpicture}
\draw[rounded corners] (-1.75, -2) rectangle (1.75, 2);
\draw[thick,->](0,0.5)--(0,-0.5);
\draw (0,1) node[draw,rounded corners]{$\langle B_1^{[2]}, B_2^{[2]}, B_3^{[2]} \rangle$} ;
\draw (0,-1) node[draw,rounded corners]{$\langle B_1^{[1]}, B_2^{[1]}, B_3^{[1]} \rangle$} ;
\draw (-2.5,0) node{$\glsub$} ;
\draw (-2,0) node{$=$} ;
\end{tikzpicture}
\end{center}

This means that the only proper $\Psi$-subspace of $\glsub$ is $W^{[1]}$. So there are three possibilities for the reduced matrix.
We start by trying to perform reduction on the first level of the flag, namely $W^{[2]}/W^{[1]}$. 
We look for a gauge transformation of the form $P^{[2]}=\id + \sum_{i=1}^3 f_i(x) B_i^{[2]}$. There is a (partial) reduction if and only if we can find $f_i(x)\in \mathbf{k}$ such that $P^{[2]}[A]$ has no component in 
$W^{[2]}/W^{[1]}$. This means that $\vec{F}:=\left( f_1(x),  f_2(x), f_3(x) \right)^T$ must be a rational solution of the linear differential system
{
\[
Y'  =
\left( \begin {array}{ccc} -x&1&x\\  -1&-x&0\\  -x&0&-x\end {array} \right)
Y
 + \left( \begin {array}{c} - x^2 - 3 x \\   x^2 \\     x^2 +1\end {array} \right).
 \]}

Using again \texttt{IntegrableConnections} from \cite{IntegrableConnections}, we {find a unique  rational solution
 and we derive an intermediate reduction matrix 
$P^{[2]}$ given by 
\[ 
P^{[2]}(x):= \left( 
\begin {array}{ccc|cc} 1&0&0&0&0\\ 0&1&0&0&0
\\ 0&0&1&0&0\\ \hline 0&0&0&1&0
\\ -1&x&x+1&0&1\end {array}
 \right). 
 \]
 }
We let  $A^{[2]}(x) := P^{[2]}(x)[A (x)]=\left( 
\begin {array}{ccc|cc} 0&1&x&0&0\\-1&0&0&0&0
\\ -x&0&0&0&0\\ \hline 2+x&x&x&x&1
\\ 0&0&0&0&-x\end {array}
 \right)$. 
 We now try to reduce the level $W^{[1]}$ of the flag. So we look for a gauge transformation of the form ${P^{[1]}=\id + \sum_{i=1}^3 f_i(x) B_i^{[1]}}$. As $W^{[1]}$ is irreducible, there will be (partial reduction) if and only if $P^{[1]}[A^{[2]}(x)]$ has no components in $W^{[1]}$.
 This means that $\vec{F}:=\left( f_1(x),  f_2(x), f_3(x) \right)^T$ must be a rational solution of the linear differential system
\[ 
  Y'
  =\left( 
\begin {array}{ccc} x&1&x\\  -1&x&0
\\  -x&0&x\end {array}
\right)
\, Y
+
\left( 
\begin {array}{c} 2+x\\  x
\\  x\end {array} 
 \right).
 \]
 As there is no such rational solution,
we find that $A^{[2]}$ cannot be reduced any further so that it is in reduced form. Computing $\liealg(A^{[2]})$ shows that it has dimension $8$.\par  
To summarize,
the flag of the ``$SO_3\times B_2$'' example after the reduction is as follows; the red rectangles correspond to the part we have deleted via the reduction matrix, and the blue rectangles correspond to the non removable part, i.e. the reduced matrix:
\begin{center}
\begin{tikzpicture}
\draw[rounded corners] (-1.75, -2) rectangle (1.75, 2);
\draw[thick,->](0,0.5)--(0,-0.5);
\draw (0,1) node[draw,red,rounded corners]{$\bcancel{\cancel{\langle B_1^{[2]}, B_2^{[2]}, B_3^{[2]} \rangle}}$} ;
\draw (0,-1) node[draw,blue,rounded corners]{$\langle B_1^{[1]}, B_2^{[1]}, B_3^{[1]} \rangle$} ;
\end{tikzpicture}
\end{center}

\subsection{The ``$B_3\times B_2$'' Example Continued.} \label{B3xB2-continued}
We continue with the example from $\S\ref{B3xB2}$. We have
$$ A_{diag} (x):= 
\left( 
\begin {array}{ccc|cc} 1&x&0&0&0\\ 0&-x-1&0&0
&0\\ 0&0&x&0&0\\ \hline 0&0&0&x&1
\\ 0&0&0&0&-x\end {array}
\right), \; 
A_{sub} (x):=\left( 
\begin {array}{ccc|cc} 0&0&0&0&0\\ 0&0&0&0&0
\\ 0&0&0&0&0\\ 
\hline
-x&1&-1-x&0&0
\\ 
x+1&{\frac {x+1}{x}}&2\,{x}^{2}+1&0&0\end {array}
 \right).
$$
The isotypical flag has the form $\glsub=W_1 \oplus W_2$ with the following flag structures on the $W_i$: 
\begin{center}
\begin{tikzpicture}
\draw[rounded corners] (4.5, -1.75) rectangle (7, 1.75);
\draw (5.75,-0.75) node[draw,rounded corners]{$\langle B_{1}^{[1]} \rangle$} ;
\draw (5.75,0.75) node[draw,rounded corners]{$\langle B_{1}^{[2]} \rangle$} ;
\draw[thick,->](5.75,0.25)--(5.75,-0.25);

\draw[rounded corners] (-0.5, -3.25) rectangle (2, 3.25);
\draw[thick,->](0.75,1.75)--(0.75,1.25);
\draw[thick,->](0.75,0.25)--(0.75,-0.25);
\draw[thick,->](0.75,-1.25)--(0.75,-1.75);
\draw (0.75,-2.25) node[draw,rounded corners]{$\langle B_{1}^{[1]} \rangle$} ;
\draw (0.75,-0.75) node[draw,rounded corners]{$\langle B_{1}^{[2]} \rangle$} ;
\draw (0.75,0.75) node[draw,rounded corners]{$\langle B_{1}^{[3]} \rangle$} ;
\draw (0.75,2.25) node[draw,rounded corners]{$\langle B_{1}^{[4]} \rangle$} ;
\draw (-1.5,0) node{$W_1$} ;
\draw (-1,0) node{$=$} ;
\draw(3.5,0) node{$W_2$};
\draw (4,0) node{$=$} ;
\end{tikzpicture}
\end{center}
We take the matrix of $\Psi$ in the adapted basis computed in $\S\ref{B3xB2}$. 

We first perform the reduction on $W_2$, with its adapted flag basis.
We look for a gauge transformation $P^{[2]}:=\id + f_2(x) B_1^{[2]}$
to remove $B_1^{[2]}$ from $P^{[2]}[A_1]$.
We find that $f_2$ should be a rational solution of 
$1-2\,x f_2\left( x \right) +2\,{x}^{2}- f_2' \left( x \right)=0$. The only rational solution is $x$ so 
$P^{[2]}:=\id + x B_1^{[2]}$. Similarly, we look for the last gauge transformation $P^{[1]}:=\id + f_1(x) B_1^{[1]}$.
We find $f_1(x)=-x+c_1$ for an arbitrary constant parameter $c_1$.
So the reduction matrix on $W_2$ is $P_{W_2}=P^{[2]}.P^{[1]}.$
The reduction on $W_2$ is then of the form
$$A_{2}:=P_{W_2}[A]=\left(
 \begin {array}{ccc|cc} 1&x&0&0&0\\ 0&-1-x&0&0
&0\\0&0&x&0&0\\\hline
 -x&1&0&x&1
\\ x+1&	\frac{x+1}{x}&0&0&-x\end {array}
\right)$$
We now perform the reduction on $W_1$, with its adapted flag basis.
We look for a gauge transformation $P^{[4]} = \id + f_4(x) B_1^{[4]}$
so that $ B_1^{[4]}$ would be absent from $P^{[4]}[A]$. The condition is
$ - (1+x) f_4 \left( x \right) +1+x- f_4'\left( x \right) 
=0. $
This equation has the unique rational solution $1$ so we take 
$P^{[4]} = \id + B_1^{[4]}$ and let $A^{[3]}:=P^{[4]}[A_{2}]$.
Now we want to remove $B_1^{[3]}$ from $A^{[3]}$ via 
$P^{[3]} = \id + f_3(x) B_1^{[3]}$.
The condition is $-(1-x)f_3\left( x \right) +1-x -
f_3' \left( x \right)
=0$ which admits the unique rational solution $f_3(x)=1$ so 
$P^{[3]} = \id + B_1^{[3]}$. 
We now set $P^{[2]} = \id + f_2(x) B_1^{[2]}$, look at the condition for $B_1^{[2]}$ to vanish from $P^{[2]}[A^{[2]}]$. We obtain
$f_2' \left( x \right) =f_2 \left( x \right) - {\frac {{x}^{2}-x-1}{x}}$. This equation has no rational solution so we see that we can no further reduce on $W_1$. So we let 
 $P_{W_1}:=P^{[4]}.P^{[3]}$ be the reduction matrix on $W_1$.
\par


\par
Finally, letting $P:=P_{W_1}P_{W_2}$, we find 
\[ A_{\textrm{red}}:=P[A]=\left(
 \begin {array}{ccc|cc} 1&x&0&0&0\\ 0&-1-x&0&0
&0\\0&0&x&0&0\\\hline 0&-x+1&0&x&1
\\ 0&
	-x+1+\frac{1}{x}
&0&0&-x\end {array}
\right).
\]
Note that $A_{\textrm{red}}$ does not depend upon $c_1$.
Now $\liealg(A_{\textrm{red}})$ has dimension $6$ and its off-diagonal part is spanned by the matrices of $B_1^{[1]}$ and $B_1^{[2]}$ of 
$W_1$. Our construction shows that any gauge transformation of the form
$\id + M$ with $M\in \mathrm{Vect}_{\CC} (B_{1}^{[1]} ,B_{1}^{[2]})$ will keep $B_1^{[2]}$ (and then $B_1^{[1]}$) in the Lie algebra so Corollary \ref{coro1} shows that $[A_{\textrm{red}}]$ is in reduced form.\par 
To summarize, the flag of the ``$B_3\times B_2$'' example after the reduction is as follows; the red rectangles correspond to the part we have deleted via the reduction matrix, and the blue rectangles correspond to the non removable part, i.e. the reduced matrix:
\begin{center}
\begin{tikzpicture}
\draw[rounded corners] (4.5, -1.75) rectangle (7, 1.75);
\draw (5.75,-0.75) node[draw,red,rounded corners]{$\bcancel{\cancel{\langle B_{1}^{[1]} \rangle}}$} ;
\draw (5.75,0.75) node[draw,red,rounded corners]{$\bcancel{\cancel{\langle B_{1}^{[2]} \rangle}}$} ;
\draw[thick,->](5.75,0.25)--(5.75,-0.25);

\draw[rounded corners] (-0.5, -3.25) rectangle (2, 3.25);
\draw[thick,->](0.75,1.75)--(0.75,1.25);
\draw[thick,->](0.75,0.25)--(0.75,-0.25);
\draw[thick,->](0.75,-1.25)--(0.75,-1.75);
\draw (0.75,-2.25) node[draw,blue,rounded corners]{$\langle B_{1}^{[1]} \rangle$} ;
\draw (0.75,-0.75) node[draw,blue,rounded corners]{$
{
{\langle B_{1}^{[2]} \rangle}}$} ;
\draw (0.75,0.75) node[draw,red,rounded corners]{$\bcancel{\cancel{\langle B_{1}^{[3]} \rangle}}$} ;
\draw (0.75,2.25) node[draw,red,rounded corners]{$\bcancel{\cancel{\langle B_{1}^{[4]} \rangle}}$} ;
\draw (-1.5,0) node{$W_1$} ;
\draw (-1,0) node{$=$} ;
\draw(3.5,0) node{$W_2$};
\draw (4,0) node{$=$} ;
\end{tikzpicture}\end{center}

\subsection{The Nilpotent Example, Continued}\label{nilpotent-example-continued}

We continue with the example of $\S\ref{nilpotent-example}$.
We let $(\tilde{N}_i)_{i=1\dots 16}$ denote the adapted basis of $\glsub$ found in $\S\ref{nilpotent-example}$. We recall that we have three indecomposable $\Psi$-spaces of dimensions $1$, $5$ and $10$ respectively. 
We will study 
$$\asub(x) := \sum_{i=1}^{16} f_i(x) \tilde{N}_i ,$$ where the $f_i (x)$ are the following
\[ \begin{array}{llll}
f_{1}(x):=0,&&&\\&&&\\
f_{2}(x):=\frac{1}{x^2},& f_{3}(x):=0,& f_{4}(x):=\frac{2-x}{2x^2},
&f_{5}(x):=\frac{1-x}{x^2},\vspace{0.1cm}\\
	f_6(x):=\frac{3-x}{x^2}, &&\\
	&&&\\
f_{7}(x):=0,&f_{8}(x):=-\frac{1}{2x},&	
	f_{9}(x):= -\frac{1}{2(x-1)},
&f_{10}(x):=\frac{1}{x}, \\
	f_{11}(x):=-\frac{1}{2x},&
	f_{12}(x):=0,
&f_{13}(x):=-\frac{1}{2(x-1)},&
f_{14}(x):=-\frac{1}{2(x-1)},\\
f_{15}(x):=\frac{1}{x^2}-\frac{1}{2(x-1)},&f_{16}(x):=\frac{2}{x^2}+\frac{1}{x-1}.
\end{array}
\]

Since the coefficient in front of $\tilde{N}_1$ is $0$, the reduction to the $1$-dimensional block is already completed.

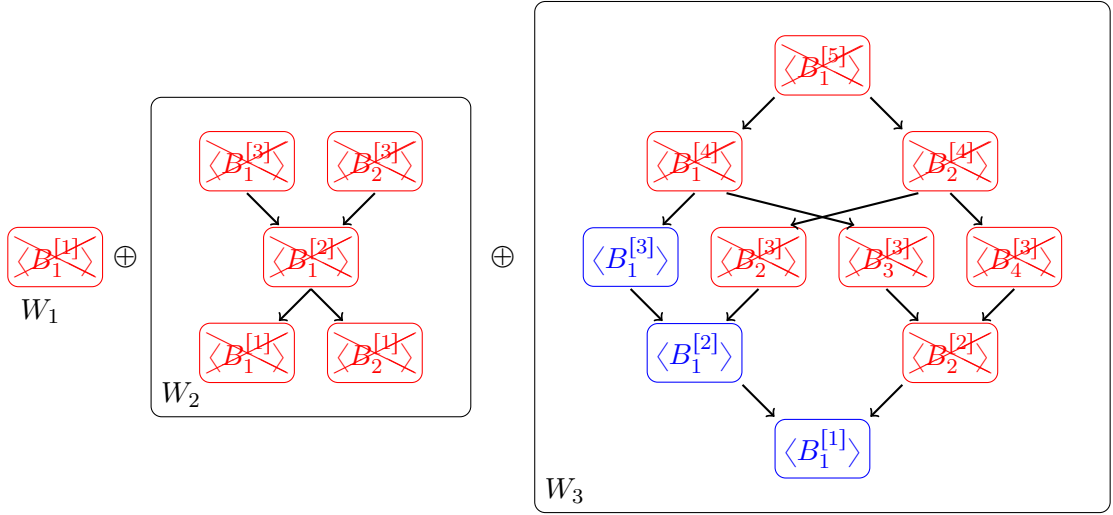
\begin{figure}[h!]

\begin{tikzpicture}[scale=0.85]
\draw (8,-3) node[draw,blue, rounded corners]{$\langle B_{1}^{[1]} \rangle$} ;
\draw (6,-1.5)  node[draw,blue, rounded corners]{$\langle B_{1}^{[2]} \rangle$} ;
\draw (10,-1.5) node[draw,red,rounded corners]{$\bcancel{\cancel{\langle B_{2}^{[2]} \rangle}}$} ;
\draw (5,0)  node[draw,blue, rounded corners]{$\langle B_{1}^{[3]} \rangle$} ;
\draw (7,0) node[draw,red,rounded corners]{$\bcancel{\cancel{\langle B_{2}^{[3]} \rangle}}$} ;
\draw (9,0) node[draw,red,rounded corners]{$\bcancel{\cancel{\langle B_{3}^{[3]} \rangle}}$} ;
\draw (11,0) node[draw,red,rounded corners]{$\bcancel{\cancel{\langle B_{4}^{[3]} \rangle}}$} ;
\draw (6,1.5) node[draw,red,rounded corners]{$\bcancel{\cancel{\langle B_{1}^{[4]}  \rangle}}$} ;
\draw (10,1.5) node[draw,red,rounded corners]{$\bcancel{\cancel{\langle B_{2}^{[4]} \rangle}}$} ;
\draw (8,3) node[draw,red,rounded corners]{$\bcancel{\cancel{\langle B_{1}^{[5]} \rangle}}$} ;
\draw[rounded corners] (3.5,-4) rectangle (12.5, 4);
\draw (3.5,-4) node [above right]{$W_3$};

\draw[thick,->](8.75,2.5)--(9.25,2);
\draw[thick,->](7.25,2.5)--(6.75,2);
\draw[thick,->](6.5,1)--(8.5,0.5);
\draw[thick,->](6,1)--(5.5,0.5);
\draw[thick,->](10,1)--(10.5,0.5);
\draw[thick,->](9.5,1)--(7.5,0.5);
\draw[thick,->](5,-0.5)--(5.5,-1);
\draw[thick,->](9,-0.5)--(9.5,-1);
\draw[thick,->](11,-0.5)--(10.5,-1);
\draw[thick,->](7,-0.5)--(6.5,-1);
\draw[thick,->](9.25,-2)--(8.75,-2.5);
\draw[thick,->](6.75,-2)--(7.25,-2.5);

\draw (-1,-1.5) node[draw,red,rounded corners]{$\bcancel{\cancel{\langle B_{1}^{[1]} \rangle}}$} ;
\draw (1,-1.5) node[draw,red,rounded corners]{$\bcancel{\cancel{\langle B_{2}^{[1]} \rangle}}$} ;
\draw (0,0) node[draw,red,rounded corners]{$\bcancel{\cancel{\langle B_{1}^{[2]} \rangle}}$} ;
\draw (-1,1.5) node[draw,red,rounded corners]{$\bcancel{\cancel{\langle B_{1}^{[3]} \rangle}}$} ;
\draw (1,1.5) node[draw,red,rounded corners]{$\bcancel{\cancel{\langle B_{2}^{[3]} \rangle}}$} ;
\draw[rounded corners] (-2.5,-2.5) rectangle (2.5, 2.5);
\draw (-2.5,-2.5) node [above right]{$W_2$};

\draw[thick,->](-1,1)--(-0.5,0.5);
\draw[thick,->](1,1)--(0.5,0.5);
\draw[thick,->](0,-0.5)--(0.5,-1);
\draw[thick,->](0,-0.5)--(-0.5,-1);

\draw (-4,0) node[draw,red,rounded corners]{$\bcancel{\cancel{\langle B_{1}^{[1]} \rangle}}$} ;
\draw (-4.7,-0.5) node [below right]{$W_1$};
\draw (-2.9,0) node{$\oplus$} ;
\draw (3,0) node{$\oplus$} ;
\end{tikzpicture}
\caption{The action of the adjoint map on the isotypical flag of the nilpotent example  $\S \ref{nilpotent-example-continued}$. The spaces $W_1$ (left), $W_2$ (center) and $W_3$ (right) satisfy $\glsub=W_1\oplus W_2 \oplus W_3$. The red rectangles correspond to the part that we  get rid of via the reduction matrix, and the blue rectangles correspond to what will remain in the reduced matrix.}
\end{figure}

\begin{center}\boxed{\textbf{Reduction of the $5$-dimensional block.}}\end{center}

To remove all of $W_2$, it would be enough to have a rational solution of the system
\[ 
\vec{Y}'=  \Lambda_2.\vec{Y} + \vec{b} \; \textrm{with} \;
{
\Lambda_2 =  \left(  \begin {array}{cc|c|cc} 
	0&0&\frac{1}{x-1}&0&0
	\\ 0&0&\frac{1}{x}&0&0
	\\ \hline  0&0&0& \frac{1}{x} &\frac{1}{x-1}
	\\  \hline 0&0&0&0&0
	\\ 0&0&0&0&0\end {array} \right)
	}
\; \textrm{and} \;
\vec{b}= \left( \begin {array}{c}  
\frac{2}{x} \\ {\frac {2\,x+1}{{x}^{2}}} \\ {\frac {x+1}{{x}^{2}}} \\ 0 \\\frac{1}{x^2} 
\end {array} \right). 
\] 
Although it is simple to find such a solution in this case, we detail the calculations to illustrate the general method. 
The following differential systems give the conditions for reduction at the three levels of the flags.
$$\begin{array}{lll}
(W^{[3]}): &  \left\{ \begin{array}{ccl}
 f'_{{3,1}}   \left( x \right) &=&
\frac{1}{x^{2}}
                              \\
 f'_{{3,2}}   \left( x \right) &=&0
\end{array}\right.
                             \\
 (W^{[2]}): &    \left\{ \begin{array}{ccl}                      
f'_{{2,1}}  \left( x \right) &=&
\,{\frac {1 }{x-1}}  f_{{3,1}}  \left( x \right)+\,{
\frac {1  }{x}}f_{{3,2}}   \left( x \right)+{\frac {2-x}{
{2x}^{2}}}
\end{array}\right.
                               \\
 (W^{[1]}): &         \left\{ \begin{array}{ccl}                          
 f'_{1,1}  \left( x \right) &=&
\,{\frac {1 }{x}}  f_{{2,1}}   \left( x \right)+{\frac {
1-x}{{x}^{2}}}
                               \\
 f'_{1,2}  \left( x \right) &=&
\,{\frac {1  }{x-1}}f_{{2,1}}  \left( x \right)+\frac{3-x}{x^2}.
\end{array}\right.
\end{array}$$

We proceed level by level. 
The first two equations correspond to the first level $W_2^{[3]}$ of the flag.
The condition to remove an element from $W_2^{[3]}$ is that there should be a rational solution to 
the equation $y'= c_{1}.\frac{1}{x^2} + c_{2}.0$. We look for a basis of the $\CC$-vector space of pairs $(c_1,c_2)\in \CC^{2}$ such that there exists   $f\in \mathbf{k}$ with $f' = c_{1}.\frac{1}{x^2} + c_{2}.0$. This space is found to be $2$-dimensional;
for $\underline{c}=(1,0)$, we have $f_{3,1}:=-\frac{1}{x}+c_{3,1}$; 
for $\underline{c}=(0,1)$, we have $f_{3,2}:= c_{3,2}$, where the $c_{3,i}$ are arbitrary constants (their importance will soon be visible). Our gauge transformation is $P^{[3]}= \id + f_{3,1} B_1^{[3]} + f_{3,2} B_2^{[3]} $
and $A^{[2]}:=P^{[3]}[A]$ does not contain any terms from $W_2^{[3]}$.
\par
Now $W_2^{[2]}$ is $1$-dimensional. The equation for the reduction on $W_2^{[2]}$ is 
\begin{eqnarray*}
y'	&=& -\frac{2}{x-1} f_{3,1} - \frac{2}{x} f_{3,2} + \frac{x+1}{x^2}
=-\frac{2}{x-1} \left(-\frac{1}{x}+c_{3,1}\right) - \frac{2}{x} c_{3,2} + \frac{x+1}{x^2}	
\\
	&=& {\frac {-2\,c_{{3,2}}-1}{x}}+{\frac {-2\,c_{{3,1}}+2}{x-1}}+\frac {1}{{x}^{2}}. 
\end{eqnarray*}	
We have necessary and sufficient conditions on the parameters  $c_{3,i}$
to have a rational solution, namely $c_{{3,1}}=1$, $c_{{3,2}}=-\frac12$ 
and then a general rational solution $f_{2,1}:= \frac{-1}{x} + c_{2,1}$.
Our new gauge transformation is 
	$P^{[2]}= \id + (-\frac{1}{x} + c_{2,1}) B_1^{[2]}$
and $A^{[1]}:=P^{[2]}[A]$ does not contain any term from $W_2^{[3]}$
nor from $W_2^{[2]}$.\par
Now we look for pairs $(c_1,c_2)\in \CC^{2}$ such that there exists   $f\in \mathbf{k}$ that is
 a rational solution of 
\begin{eqnarray*}
y' &=&  c_1\left(-\frac{2}{x} f_{2,1} +\frac{2x+1}{x^2} \right) + c_2 \left(-\frac{2}{x-1} f_{2,1} +\frac{2}{x}\right) \\ 
 &=&c_1\left(-\frac{2}{x}  \left(\frac{-1}{x} + c_{2,1}\right) +\frac{2x+1}{x^2} \right) + c_2 \left(-\frac{2}{x-1}  \left(\frac{-1}{x} + c_{2,1}\right) +\frac{2}{x}\right)	\\
 &=& {\frac {-2c_{1}c_{2,1} +2\,c_{{1}}}{x}}+\,{\frac {2c_{{2}}
 \left( -c_{{2,1}}+1 \right) }{x-1}}+{\frac {3c_{{1}}}{{x}^{2}}}.
 \end{eqnarray*}	
This integral is rational if and only if both residues are zero. As the solution $c_1=c_2=0$ is not admissible, we see that a necessary and sufficient condition is $c_{2,1}=1$.
 The set 
of desired pairs $(c_1,c_2)$ is of dimension $2$. For $\underline{c}=(1,0)$, we have $f_{1,1}:=-\frac{3}{x}+c_{1,1}$; 
for $\underline{c}=(0,1)$, we have $f_{1,2}:= c_{1,2}$, where the $c_{1,i}$ are  constants and can be chosen arbitrarily. 
For the simplicity of the expression of the gauge transformation,
we can choose $c_{1,1}=c_{1,2}=0$ (but the other choice is valid too). Our last gauge transformation matrix will be 
$P^{[1]} = \id -\frac3x B_1^{[1]}$. 
\par
Finally, the reduction matrix on $W_2$ is 
\[ P_2:=P^{[3]}P^{[2]}P^{[1]} 
	= \id+\left(-\frac1x +1\right)B_1^{[3]} - \frac12 B_2^{[3]} + \left(-\frac1x +1\right) B_1^{[2]}
	- \frac3x B_1^{[1]} 
\] 
and the matrix $A_2:=P_2[A]$ is reduced on $W_2$. 

\begin{center}\boxed{\textbf{Reduction of the $10$-dimensional block.}}\end{center}
We now turn to the $10$-dimensional block $W_3$. 
The reduction equations are
\[\begin{array}{cl}
(W^{[5]}) : &  \left\{  f_{5,1}' \left( x \right) =0 \right.
\\
(W^{[4]}):  &  \left\{\begin{array}{ccl} 
	 f_{4,1}'(x) &=&
\,{\frac {1  }{x}}f_{5,1}(x)-\frac{1}{2x}
                              \\
	 f_{4,2}'(x) &=&
\,{\frac {1  }{x-1}}f_{5,1}(x)-\frac{1}{2(x-1)}
 	\end{array} \right.
\\
(W^{[3]}):  &  \left\{\begin{array}{ccl}
  f_{3,1}'(x) &=&
\,{\frac {1  }{x}}f_{4,1}(x)+\frac{1}{x}
                      \\
 f_{3,2}'(x) &=&
\,\frac{1}{x}f_{4,2}(x)-\frac{1}{2x}
                         \\
 f_{3,3}'(x) &=&
\,\frac{1}{x-1}f_{4,1}(x) 
                      \\
 f_{3,4}'(x) &=&
\,{\frac {1  }{x-1}}f_{4,2}(x)-\frac{1}{2(x-1)}
	\end{array} \right.
\\
(W^{[2]}):  &  \left\{\begin{array}{ccl}
 f_{2,1}'(x) &=&
\, \frac{1}{x-1}f_{3,1}(x)\,-\frac {1 }{x} f_{3,2}(x)-
	\frac{1}{2(x-1)} 
                              \\
 f_{2,2}'(x) &=& 
\,{-\frac {1 }{x-1}}f_{3,3}(x)+\,{
\frac { 1}{x}}f_{3,4}(x)- \frac{1}{2(x-1)}+ \frac{1}{x^2}
                              \end{array} \right.
                              \\
(W^{[1]}) : &  \left\{
  f_{1,1}'(x) =
\,{\frac {1 }{x-1}} f_{2,1}(x)+\,{
\frac {1 }{x}} f_{2,2}(x) + \frac{2}{x^2}+ \frac{1}{x-1}. 
\right.
\end{array}
\]
The first equation gives $f_{5,1}=c_{5,1}\in \CC$. The equations on $W^{[4]}$ both have rational solutions if and only if $c_{5,1}=\frac12$. We then have $f_{4,i}=c_{4,i}\in \CC$.
\\
Letting $y:=\sum_{i=1}^4 c_i .f_{3,i}$ for unknown $c_i$, the equations on $W^{[3]}$ are 
\[ 
y'  = {\frac {\,c_{{1}}(c_{{4,1}}+1)+\,c_{{2}}(c_{{4,2}}-1/2)}{x}}+\,{
\frac {c_{{3}}c_{{4,1}}+c_{{4}}(c_{{4,2}}-1/2)}{x-1}} .
\]
and we investigate values of $\underline{c}:=(c_1,\ldots,c_4)$ (and $c_{{4,i}}$) for which this may have a rational solution.
Of course, this has a rational solution if and only if both residues are zero. 
The algebraic conditions for both residues to be zero are
\begin{equation}\label{eq1}
\left\{\begin{array}{lll}
c_{{1}}(c_{{4,1}}+1)+\,c_{{2}}(c_{{4,2}}-1/2)&=&0\\
c_{{3}}c_{{4,1}}+c_{{4}}(c_{{4,2}}-1/2)&=&0.
\end{array} \right. \end{equation}
 We view \eqref{eq1} as a linear system in the $\underline{c}$ in coefficients in $\CC (c_{4,i})$. We study for which $c_{4,i}$  the space of  solutions $\underline{c}$ of \eqref{eq1} has maximal dimension. 
Here, it would be $4$ if and only if $c_{{4,1}}+1=c_{{4,1}}=c_{{4,2}}-1/2=0$
 which cannot occur. 
We see that it has dimension $3$ if and only if either 
$c_{{4,1}}+1=c_{{4,2}}-1/2=0$ or $c_{{4,1}}=c_{{4,2}}-1/2=0$.
Then, the only possibilities are 

$\left\{  c_{{3}}=0,  c_{4,1}=-1,c_{{4,2}}=1/2 \right\}$ and 
$ \left\{ c_{{1}}=0, c_{{4,1}}=0,c_{{4,2}}=1/2 \right\} $. We need to study each component separately.
It turns out that both lead to the same result, a reduced form. We show how things go on the second component. The computations for the first component may be found in the Maple Worksheet \cite{DrWe20a} and are detailed in \cite{dreyfus2021differential}.
\\
We have $c_{{4,1}}=0$ and $c_{{4,2}}=\frac{1}{2}$.
The set of $\underline{c}$ for which the equation has a rational solution is a $3$-dimensional $\CC$-vector space;
it is generated by $(0,1,0,0)$, $(0,0,1,0)$ and $(0,0,0,1)$.
We have thus have $f_{3,i}(x)=c_{3,i}\in \CC$ for $i=2,3,4$ and $f_{3,1}$ remains unknown: the equation 
$f_{3,1}'(x)=-\frac1x$ has no rational solution.
\\
So we cannot remove $B_1^{[3]}$ from the result.  However, the constant $c_{3,1}$ will play a role in the reduction process.
\begin{remark}
As  $B_1^{[3]}$ appears in $\glie$, the adjoint action of $\adiag$ implies that $B_1^{[2]}$ and $B_1^{[1]}$
will be present in $\glie$, even if we found transformations which might seem to remove them from the reduced matrix. In the matrices of $\glie$ given at the end of the computation, the third one has
$B_1^{[3]}$ as its off-diagonal part, the fourth one has $B_1^{[2]}$ and the fifth one is  $B_1^{[1]}$.
\end{remark}

Letting $y=c_1 .f_{2,1}+c_2 .f_{2,2}$, the family of reduction equations on $W_2$ is now:
$$ 
y'=\frac{-c_{1}c_{3,2}+c_{2}c_{3,4}}{x}+\frac{c_{1}(c_{3,1}-1/2)-c_{2}(c_{3,3}+1/2)}{x-1}+\frac{c_{2}}{x^{2}}.
$$
The condition for both residues to be zero gives again a linear system on $c_1$ and $c_2$
\[ \left\{ \begin{array}{llllll} 
-&c_{1}c_{3,2} &+&c_{2}c_{3,4}&=&0\\ 
+&c_{1}(c_{3,1}-1/2)  &-&c_{2}(c_{3,3}+1/2) &=&0.
	\end{array}\right.
\]
The space of solutions $(c_1,c_2)$ has maximal dimension $2$ when  $c_{3,2}=c_{3,4}=0$, $c_{3,1}=1/2$
and $c_{3,3}=-\frac{1}{2}$. Now, for $\underline{c}=(1,0)$, we obtain $f_{2,1}=-\frac{1}{x}+c_{2,1}$; 
for $\underline{c}=(0,1)$, we find $f_{2,2}= c_{2,2}$. The last equation is 
$$f_{1,1}'(x)=\frac{1}{x-1}\left(-\frac{1}{x}+c_{2,1}\right) +\frac{c_{2,2}}{x} + \frac{2}{x^2}+\frac{1}{x-1}=\frac{1}{x}-\frac{1}{x-1}+\frac{c_{2,1}}{x-1} +\frac{c_{2,2}}{x} + \frac{2}{x^2}+\frac{1}{x-1}.$$ This 
imposes $c_{2,1}=0$ and $c_{2,2}=-1$ and $f_{1,1}(x)=\frac{1}{x}+c_{1,1}$, where $c_{1,1}$ is a constant that can be chosen arbitrary.
We obtain 
the reduced form
\[ 
A_{\textrm{red}}(x) := \left( \begin {array}{cccc|cccc} 1&0&\frac{1}{x}&0&0&0&0&0
\\   \frac{1}{x-1}&1&0&-\frac{1}{x}&0&0&0&0
\\  0&0&1&0&0&0&0&0\\  0&0& \frac{1}{x-1}&1&0&0&0&0\\ \hline 0&0&0&-\frac{1}{x}&1&0&\frac{1}{x}&0\\  0&0&0&0& \frac{1}{x-1}&1&0&-\frac{1}{x}\\  0&0&0&0&0&0&1&0\\  0&0&0&0&0
&0& \frac{1}{x-1}&1\end {array} \right).
\]
The associated Lie algebra is  spanned by
\begin{eqnarray*}
 {\displaystyle{\tiny \left(\begin {array}{cccc|cccc} 1&0&0&0&0&0&0&0\\  0
&1&0&0&0&0&0&0\\  0&0&1&0&0&0&0&0
\\  0&0&0&1&0&0&0&0\\\hline  0&0&0&0&1&0&0
&0\\  0&0&0&0&0&1&0&0\\  0&0&0&0&0&0
&1&0\\  0&0&0&0&0&0&0&1\end {array} \right) 
, 
\left(
\begin {array}{cccc|cccc} 0&0&0&0&0&0&0&0\\  1&0&0&0&0
&0&0&0\\  0&0&0&0&0&0&0&0\\  0&0&1&0
&0&0&0&0\\\hline    0&0&0&0&0&0&0&0\\  0&0&0
&0&1&0&0&0\\  0&0&0&0&0&0&0&0\\  0&0
&0&0&0&0&1&0\end {array} \right) 
, 
\left(\begin {array}{cccc|cccc} 0&0
&1&0&0&0&0&0\\  0&0&0&-1&0&0&0&0\\  0
&0&0&0&0&0&0&0\\  0&0&0&0&0&0&0&0
\\\hline    0&0&0&-1&0&0&1&0\\  0&0&0&0&0&0&0
&-1\\  0&0&0&0&0&0&0&0\\  0&0&0&0&0&0
&0&0\end {array} \right) }}
, \\ 
{\displaystyle{\tiny 
\left(\begin {array}{cccc|cccc} 0&0&0&0&0&0
&0&0\\  0&0&1&0&0&0&0&0\\  0&0&0&0&0
&0&0&0\\  0&0&0&0&0&0&0&0\\\hline    0&0&1/2
&0&0&0&0&0\\  0&0&0&-1/2&0&0&1&0\\  0
&0&0&0&0&0&0&0\\  0&0&0&0&0&0&0&0\end {array}
 \right) 
 , \left(\begin {array}{cccc|cccc} 0&0&0&0&0&0&0&0
\\  0&0&0&0&0&0&0&0\\  0&0&0&0&0&0&0
&0\\  0&0&0&0&0&0&0&0\\\hline    0&0&0&0&0&0
&0&0\\  0&0&1&0&0&0&0&0\\  0&0&0&0&0
&0&0&0\\  0&0&0&0&0&0&0&0\end {array} \right). }}
\end{eqnarray*}
This gives us the Lie algebra $\mathfrak{g}=\textrm{Lie}( A_{\textrm{red}}(x) )$ of the differential Galois group.
It is $5$-dimensional, whereas the Lie algebra associated to the original matrix $A(x)$ had dimension $14$.
This shows that the Picard-Vessiot extension is obtained from 
$K_{\textrm{diag}}$ by performing only one integral.

\begin{remark}\label{rem4}
We recall that $\gdiag$ is the Lie algebra associated to $\adiag(x)$ and (cf. the proof of Theorem \ref{theo1})
\[
 	\gdiag= \left\{ \left(\begin{array}{c|c} D_1 & 0 \\\hline 0 & D_2 \end{array}\right) 
		\left|  \; \exists \,  S, \; \hbox{such that } \; \left(\begin{array}{c|c} D_1 & 0 \\\hline S & D_2 \end{array}\right) \in \glie
		 \right\} \right. .	
\]
Let us set
\[
 	\glie_s:=\glie/\gdiag = \left\{ \left(\begin{array}{c|c} 0 & 0 \\\hline S & 0 \end{array}\right) 
	 	\left|  \; \exists \, D_1, D_2, \; \hbox{such that } \; \left(\begin{array}{c|c} D_1 & 0 \\\hline S & D_2 \end{array}\right) \in \glie
		\right\} \right. . 	
\]
Note that neither $\gdiag$ nor $\glie_s$ are subalgebras of $\glie$.
Here, $\glie$ has dimension $5$, $\gdiag$ has dimension $4$ and $\glie_{sub}:=\glie \cap \glsub$ has dimension $1$. However, we see that the set $\glie_s$ of ``off-diagonal'' parts of elements of $\glie$ has dimension $3$ so that $\glie \subsetneq \gdiag \oplus \glie_s$. 
Our reduction process computes a subalgebra $\glie$ of $\liealg(A)$ such that its $\glie_s$ has minimal dimension.  
Now,  an odd phenomenon occurs;~in the course of the reduction, an off-diagonal element in $\liealg(A)$ may be "absorbed" as the triangular part of an element that was present.
For example, in the third matrix, an element of $\glsub$ has a coefficient $-\frac1x$ after reduction so it becomes the lower triangular part of the constant matrix from $\liealg{(\adiag)}$ corresponding to $\frac1x$.  
So the minimization of the dimension of $\glie_s$ is a necessary, but a priori not sufficient condition to reduce the system. 
To prove that the reduction process is complete, we need to show that there are no gauge transformation which send the last matrix 
to nilpotent elements whose coefficients are in the Wei-Norman decomposition of $\adiag$.
A simple computation shows that the last element $\gsub$ cannot be ``absorbed'' as the off-diagonal part of an element of $\gdiag$ so our system is indeed in reduced form. 
\end{remark}


\section{Computation of the Reduction Matrix on an Isotypical Flag.}\label{sec4}

Let $\hlie:=\liealg(A)$. As above, we let 
\begin{eqnarray*}
 	\hdiag&:=& \left\{ \left(\begin{array}{c|c} D_1 & 0 \\\hline 0 & D_2 \end{array}\right) 
		\left|  \; \exists \,  S \; \hbox{such that } \; \left(\begin{array}{c|c} D_1 & 0 \\\hline S & D_2 \end{array}\right) \in \hlie
		 \right\} \right. , 	
\\
 	\hlie_s&:=&\hlie/\hdiag = \left\{ \left(\begin{array}{c|c} 0 & 0 \\\hline S & 0 \end{array}\right) 
	 	\left|  \; \exists \, D_1, D_2 \; \hbox{such that } \; \left(\begin{array}{c|c} D_1 & 0 \\\hline S & D_2 \end{array}\right) \in \hlie
		\right\} \right.
		\, \textrm{ and } \\
	\hsub&:=&  \left\{ \left(\begin{array}{c|c} 0 & 0 \\\hline S & 0 \end{array}\right) \in \hlie\right\}.
\end{eqnarray*}
We have $\hdiag=\liealg(\adiag)$. Neither $\hdiag$ nor $\hlie_s$ are subalgebras of $\hlie$. We have $\hlie \subset \hdiag \oplus \hlie_s$ but the nilpotent example,  see $\S \ref{nilpotent-example-continued}$,  shows that the inclusion may be strict. Our reduction strategy will consist of three steps.

\begin{itemize}
\item[\fbox{$1$}] Find a gauge transformation $P=\id + B$, where\footnote{Here, it would actually be enough to take $B\in  \hlie_s(\mathbf{k})$ but our choice simplifies both the exposition and the implementation for a minor additional cost.} $B\in  \glsub(\mathbf{k})$, and 
$\tilde{\hlie}:=\liealg(P[A])$ such that $\tilde{\hlie}_s$ has minimal dimension. The result will depend on parameters. 
\item[\fbox{$2$}] Look for an eventual gauge transformation $P=\id + B$, where $B\in  \glsub(\mathbf{k})$ which,  maps each element of $\tilde{\hlie}_{\textrm{sub}}$  to an element of $\tilde{\hlie}_{\textrm{sub}}$ whose coefficients are in the Wei-Norman decomposition of $\adiag$. This depends again on parameters.
\item[\fbox{$3$}] Compute, with a Groebner basis, conditions on the remaining parameters to have a Lie algebra $\liealg(P[A])$ of minimal dimension. 
\end{itemize}
Step \fbox{$1$} is the main part of the algorithm. It consists in trying to eliminate as many generators as possible in $\hlie_s$. 
Heuristically, Step \fbox{$1$} seems to be always sufficient to obtain a reduced form. However, Steps \fbox{$2$} and \fbox{$3$} are necessary to have a mathematically guaranteed procedure. 
\\
We use the isotypical decomposition 
$  \glsub   = \bigoplus_{i=1}^{\kappa}  W_{i} $
of Proposition~\ref{propo3}. 
Proposition~\ref{propo2} tells us that in the reduction process, we may (and will)  perform a reduction on each isotypical block $W_{i}$ independently.
\\
We consider an isotypical block  ${W = V_1 \oplus \dots \oplus V_{\nu}}$ where the $V_i$ are indecomposable pairwise
isomorphic $\Psi$-spaces.
We follow the construction above Definition \ref{isotypical-flag} to obtain a $\Psi$-isotypical flag on $W$:
\[
     W= W^{[\mu]} \supsetneq W^{[\mu-1]}\supsetneq \cdots \supsetneq W^{[1]} \supsetneq W^{[0]}=\{0\},
     \]
with 
$W^{[k]}=\displaystylee \bigoplus_{j=1}^{\nu_{k}} V_{j}^{[k]}$
and we have a $\Psi$-isomorphism 
$\phi_j: V_{1}^{[k]}\rightarrow V_{j}^{[k]}$.  Recall that $W^{[k]}/W^{[k-1]}$ is a direct sum of pairwise isomorphic irreducible $\Psi$-spaces.
Before we continue, we need to enrich our toolbox with the following fundamental classical algorithm.

\subsection{Differential systems with a parametrized right-hand side}
We recall a classical computational lemma on rational solutions of differential systems with a parametrized right-hand side.
We reprove it here for self-containedness though it is well known to specialists.

\begin{lemma}[\cite{Si91a,Ba99a,Be02a}]\label{exp-parametrebis} 
Let $m\in \mathbb{N}^{*}$. Given a matrix $\Lambda(x) \in\CM_m(\mathbf{k})$ and $t$ 
vectors $\vec{b}_i(x) \in \mathbf{k}^m$, we consider the differential system
with parametrized right-hand-side 
	$$ Y'(x) = \Lambda(x) Y(x) + \sum_{i=1}^t s_i \vec{b}_i(x),$$
where the $s_i$ are scalar parameters. \\
The set of tuples $\left(F(x), (c_1,\ldots,c_t)\right) \in \mathbf{k}^m \times \CC^t$, such that the differential system 
${ Y'(x) = \Lambda(x) Y(x) + \sum_{i=1}^t c_i \vec{b}_i(x)}$ admits a rational solution $Y(x)=F(x)$ is a finite-dimensional $\CC$-vector space. Furthermore, with our assumptions on $\mathbf{k}$,  one can effectively compute a basis of this vector space.
\end{lemma}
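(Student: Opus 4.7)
The plan is to reduce the problem to computing rational solutions of a single homogeneous linear differential system, to which Assumption \cercle{2} can be applied directly. The key observation is that the parameters $s_i$ are constants and can therefore be absorbed as extra unknowns of the system whose derivative vanishes.

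First I would verify the $\CC$-vector space structure, which is immediate: both sides of the differential system $Y' = \Lambda Y + \sum c_i \vec{b}_i$ depend $\CC$-linearly on $(Y,(c_1,\ldots,c_t))$, so if $(F,(c_1,\ldots,c_t))$ and $(\tilde F,(\tilde c_1,\ldots,\tilde c_t))$ are two solutions, any $\CC$-linear combination is again a solution (here one uses that $\CC$ is the field of constants of $\mathbf{k}$, so constants commute with the derivation).

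For effectivity, I would form the $(m+t)\times(m+t)$ matrix
\[
\widetilde \Lambda(x) := \left(\begin{array}{c|c} \Lambda(x) & B(x) \\\hline 0 & 0 \end{array}\right) \in \CM_{m+t}(\mathbf{k}),
\]
where $B(x)$ is the $m\times t$ matrix whose $i$-th column is $\vec{b}_i(x)$, and consider the homogeneous linear differential system $\widetilde Y'(x) = \widetilde \Lambda(x) \widetilde Y(x)$. A rational solution $\widetilde Y = (F,S)^T \in \mathbf{k}^{m+t}$ satisfies $S' = 0$, so $S = (c_1,\ldots,c_t) \in \CC^t$ (recall that $\CC$ is the constant subfield of $\mathbf{k}$), and then $F' = \Lambda F + \sum c_i \vec{b}_i$. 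Conversely, every tuple $(F,(c_1,\ldots,c_t))$ of the kind described in the lemma produces such a rational solution $\widetilde Y$. This bijection is $\CC$-linear, so the space in question is identified with the space of rational solutions of $[\widetilde \Lambda]$; by Assumption \cercle{2}, we can compute a basis of the latter effectively.

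There is no real obstacle here beyond bookkeeping: the construction is standard and avoids having to treat the parameters symbolically, since they become part of the unknown vector and are forced to be constants by the bottom block of zeros in $\widetilde \Lambda$. The only point that might need a brief remark is that the parametrized right-hand side genuinely corresponds to a homogeneous system after this augmentation, which is why the general rational-solutions algorithm (rather than an algorithm for inhomogeneous systems with symbolic parameters) suffices.
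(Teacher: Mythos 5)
Your proposal is correct and follows essentially the same route as the paper: the paper's proof also augments the system to the homogeneous system $Z' = \left(\begin{smallmatrix} \Lambda & B \\ 0 & 0 \end{smallmatrix}\right) Z$ with $Z=(F,c_1,\ldots,c_t)^T$ and invokes Assumption \cercle{2}. Your additional remarks (the explicit check of $\CC$-linearity and the observation that the bottom zero block forces the parameters to lie in the constant field) are correct and merely make explicit what the paper leaves implicit.
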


\begin{proof} 
We give a short proof of this well known fact, from \cite{Be02a}, page 889.\\
A vector $F(x)=(f_1 (x),\ldots,f_m (x))^T\in \mathbf{k}^m$  is a rational solution of the differential system ${ Y'(x) = \Lambda(x) Y(x) + \sum_{i=1}^t c_i \vec{b}_i(x) }$, for given constants $c_i$, if and only if the vector  ${Z(x):=(f_1 (x),\ldots,f_m (x),c_1,\ldots,c_t)^T}$ is a rational solution of the homogeneous first order system
	$$ Z'(x) = \left( \begin{array}{c|c} \Lambda(x) & \vec{b}_1(x),\ldots,\vec{b}_t(x) \\ \hline 0 & 0 \end{array}\right) Z(x).$$
The rational solutions of the latter form a $\CC$-vector space which, by Assumption \cercle{2}, see $\S\ref{base-field}$, can be explicitly computed.
\end{proof}

\begin{remark}\label{rem2}
Regarding the proof of Lemma \ref{exp-parametrebis}, we see that we may replace $\mathbf{k}$ by any field 
which satisfies Assumption \cercle{2} of Section \ref{base-field}. 
By \cite{Si91a}, Lemma 3.5, we may thus replace 
the base field $\mathbf{k}$ by $\mathbf{k}(t_1,\dots,t_{\ell})$ where $t_{i}'=0$ and the new constant field
$\CC (t_1,\dots,t_{\ell})$ is a transcendental extension of $\CC$.
\end{remark}

%
\subsection{Reduction on a level $W^{[k]}/W^{[k-1]}$ of the  isotypical block $W$} \label{reduction-one-level}
Let
us fix $k\in \{ 1,\dots, \mu\}$. Assume that we have performed reductions on the levels $W^{[\mu]}/W^{[\mu-1]},\dots,W^{[k+1]}/W^{[k]}$ (this assumption being void if $k=\mu$) and that we 
want to perform reduction on the level $W^{[k]}/W^{[k-1]}$. Our matrix $A$ has thus been transformed into a matrix $A^{[k]}$.
{As in the nilpotent example, see \S \ref{nilpotent-example-continued}}, the reduction on the previous levels may have introduced a set $T_k$ of parameters in the 
off-diagonal
coefficients of this matrix $A^{[k]}$, and 
an affine variety $\CT_k$ defined by the
 algebraic conditions satisfied by these parameters. 
So, at this stage
the matrix $A^{[k]}(x,\underline{t})$ 
has off-diagonal coefficients 
in $\mathbf{k}(T_k)$ with the constraint $\underline{t}\in \CT_k$. The construction below will show how $T_k$, $\CT_k$, and $A^{[k]}$
are built with a decreasing recursion from $T_{\mu}=\varnothing$, $\CT_{\mu}=\varnothing$, and $A^{[\mu]}=A$. 
The matrix $\Psi$ of the adjoint action of $\adiag$ is unchanged at each step and does not depend on the parameters.
\\\par

Let $B_{1,1}, \ldots, B_{r,1}$ be a basis of $V_{1}^{[k]}/V_{1}^{[k-1]}$. It induces a basis of  $V_{1}^{[k]}/V_{1}^{[k-1]}\otimes_{\CC}\CC(T_k )$. 
The isomorphism ${\phi_j:V_1^{[k]}\rightarrow V_j^{[k]}}$ induces an isomorphism 
${\tilde{\phi}_j:V_1^{[k]}/V_1^{[k-1]}  \rightarrow V_j^{[k]}/V_j^{[k-1]} }$ 
	so we may define 
$B_{i,j}:=\tilde{\phi}_j(B_{i,1})$ to obtain an adapted basis of 
 $V_j^{[k]}/V_j^{[k-1]}$ and, hence, of $V_j^{[k]}/V_j^{[k-1]}\otimes_{\CC}\CC(T_k )$.
In this basis, the restriction of $\Psi$ to each $V_j^{[k]}/V_j^{[k-1]}\otimes_{\CC}\CC(T_k )$ will have the same matrix, which we call 
$\Lambda^{[k]}$.
This matrix $\Lambda^{[k]}$ has coefficients in $\mathbf{k}$.

Let $\Psi^{[k]} := \Psi|_{W^{[k]}/W^{[k-1]}}$ be the 
restriction of $\Psi$ to $W^{[k]}/W^{[k-1]}$. We still call $\Psi^{[k]}$ the restriction to $(W^{[k]}/W^{[k-1]})\otimes_{\CC}\mathbf{k}(T_{k})$.
In our adapted basis of $W^{[k]}/W^{[k-1]}$, the matrix of $\Psi^{[k]}$ is block diagonal with all blocks equal to $\Lambda^{[k]}$. 

For any matrix $B \in  W^{[k]}$, we have 

	\begin{equation}\label{eq4}
	 \Psi(B) = \Psi^{[k]} (B) + \widetilde{B},
	\quad \textrm{ with } \; \widetilde{B}\in  W^{[k-1]}.
	\end{equation}

Simplifying notations, let us set $\nu=\nu_{k}$. We decompose the matrix $A^{[k]}$ of our system at this stage as 
\[A^{[k]}(x,\underline{t}) = \overline{A}(x,\underline{t}) + \displaystylee 
 	\sum_{j=1}^{\nu} \left(  \sum_{i=1}^r   a_{i,j}(x,\underline{t}) B_{i,j}  \right).
	\] 
The coefficients $a_{i,j}(x,\underline{t})$ are in  $\mathbf{k}(T_k)$; the matrix
$\overline{A}(x,\underline{t})$  represents the remaining components of parts of $A^{[k]}$, 
including
the components $W^{[\ell]}/W^{[\ell-1]}\otimes_{\CC}\mathbf{k}(T_{k})$ with $\ell\neq k$.

We look for a gauge transformation of the form 
\begin{equation}\label{jauge}
{
	P(x,\underline{t}) = \id_n + B 
	\, \textrm{ where } \, B= \displaystylee
	\sum_{j=1}^{\nu} \left(  \sum_{i=1}^r   f_{i,j}(x,\underline{t}) B_{i,j}  \right), 
}
\end{equation}
with $f_{i,j}(x,\underline{t})\in\mathbf{k}(T_{k})$. 
We apply 
Propositions~\ref{propo2} and \eqref{eq4}  
to obtain the existence of 
$\widetilde{B}_{i,j}\in W^{[k-1]}$ 
{(the part of $\Psi(B)$ which is sent to $W^{[k-1]}$)}
such that 

\begin{multline}\label{eq10}
P[A^{[k]}] = \left[\overline{A}(x,\underline{t}) 
	+
\sum_{j=1}^{\nu} \left(  \sum_{i=1}^r   f_{i,j}(x,\underline{t}) \widetilde{B}_{i,j} \right) \right ]\\
{
+
\underbrace{
\sum_{j=1}^{\nu} \sum_{i=1}^r   \left(  a_{i,j}(x,\underline{t}) B_{i,j}
+  f_{i,j}(x,\underline{t})\Psi^{[k]}( {B}_{i,j} ) 
- f_{i,j}'(x,\underline{t}) {B}_{i,j}
\right)}_{\textrm{components of }P[A^{[k]}] \textrm{ on }W^{[k]}/W^{[k-1]}}
.}
\end{multline}
Suppose we 
hoped
to remove all the $B_{i,j}$. We would have to remove all of the second sum in \eqref{eq10}. 
For each $j\in \{ 1 ,\dots, \nu \}$, let
\[ 
	\vec{Y}_j:=\left(\begin{matrix} f_{1,j} (x,\underline{t})\\ \vdots \\ f_{r,j}(x,\underline{t}) \end{matrix}\right)
	 \; \text{ and } \; 
	 \vec{b}_j  :=\left(\begin{matrix}a_{1,j}(x,\underline{t}) \\ \vdots \\a_{r,j}(x,\underline{t}) \end{matrix}\right).
\]
The elimination conditions would become 
$$
\left\{ \begin{matrix} 
	\vec{Y}_1' & = & \Lambda^{[k]} \vec{Y}_1 &\ + & \vec{b}_1 \\
		& \vdots & & & \\
	\vec{Y}_\nu' & = & \Lambda^{[k]} \vec{Y}_\nu & + & \vec{b}_\nu	.
\end{matrix}\right.
$$
However, some of these systems may have no rational solution  
whereas some combination of the  $\vec{Y}_i$ could be rational and lead to (partial) reduction.
Indeed, by performing reduction, we are trying to eliminate irreducible $\Psi$-subspaces of 
$W^{[k]}/W^{[k-1]}\otimes_{\CC}\CC (T_k)$. By Goursat's Lemma, see Lemma \ref{lem1}, these are of the form
$\{\sum_{i=1}^{\nu}c_{j} \tilde{\phi}_i(v), \;v\in V_{1}^{[k]}/V_{1}^{[k-1]} \otimes_{\CC}\CC (T_k)  \}$ with $c_{i}\in \CC(T_{k})$.
In order to perform reduction, we thus need to look for constants
$\underline{c}=(c_1,\ldots, c_\nu)$ with $c_i \in \CC(T_{k})$ such that the following system has a nonzero rational solution in $\mathbf{k}(T_k)$:
\begin{equation} \label{condition-reduction}
	\vec{Y}' = \Lambda^{[k]} \vec{Y} + \sum_{i=1}^{\nu} c_i \vec{b}_i.
\end{equation}
Note that $\Lambda^{[k]}$ does not depend on the parameters and 
Lemma \ref{exp-parametrebis} stays valid with the field $\mathbf{k}$ replaced by $\mathbf{k}(T_k)$, see Remark \ref{rem2}.  
To decide when the system has nonzero rational solutions, one first finds bounds on valuations at the poles and at infinity (this is possible because $\Lambda^{[k]}$ does not depend on the parameters);  this reduces the problem to solving a system of linear equations whose right hand side depends linearly on the parameters $c_i$. The compatibility conditions for this system (obtained, for example, by gaussian elimination) yield
a matrix $M(\underline{t})$
with coefficients in $\CC(T_k)$ so that the system (\ref{condition-reduction}) has a rational solution if and only if
\begin{equation} \label{rang-minimal} M(\underline{t}).  \left(\begin{matrix} c_1 \\ \vdots \\ c_{\nu} \end{matrix}\right) = 0.
\end{equation}
We want  $M(\underline{t})$ with  $\underline{t}\in \CT_k$ to have a kernel of maximal dimension, as this kernel 
allows us to compute 
irreducible $\Psi$-subspaces
that can be removed in the reduction process. 
Let $\CV_k$ denote the algebraic conditions on $T_k$ which encode the fact that 
$M(\underline{t})$ with  $\underline{t}\in \CT_k$ has minimal rank 
 $\nu-d$; this can be computed for example with a Groebner basis, see \cite{CoLiOS07a}. 
We set $\CT_{k-1}:=\CT_k \cap \CV_k$. 
Now $\CT_{k-1}$ is a finite union of irreducible algebraic varieties; this can again be computed with a Groebner basis \cite{CoLiOS07a}. For each of these irreducible varieties, we  proceed as follows.
Applying these conditions to the matrix $ M(\underline{t}) $, we choose a basis 
$\CB$ of $\ker( M(\underline{t}) )$ in $\CC( T_{k})^{\nu}$. 
For $\underline{c}_j$ in $\CB$, we compute the corresponding general rational solution $\vec{F}_j$ to the system 
$ Y'=\Lambda^{[k]} Y + \sum_{i=1}^{\nu} c_{i,j} \vec{b}_i $. Note that $[\Lambda^{[k]}]$
may have rational solutions (this was the case in our nilpotent example,  see $\S \ref{nilpotent-example-continued}$) hence the need for a general solution. 
Note that at this stage it may be necessary to introduce additional
parameters in order to express this general solution, in which case we add these new
additional parameters to $T_k$ to form $T_{k-1}$.
\begin{remark} As we saw in the nilpotent example $\S \ref{nilpotent-example-continued}$, 
the process of 
passing from $\CT_{k}$ to $\CT_{k-1}$
may add constraints that fix the value of a constant, thus withdrawing 
it from later computations.
 
\end{remark}

Now we have found a new adapted basis $ \overline{B}_{i,j}(\underline{t})$ of $W^{[k]}/W^{[k-1]}\otimes_{\CC} \CC(T_{k})$ and a gauge transformation 
\[ P^{[k]}(x,\underline{t}) = \id_n + \displaystylee
	\sum_{j=1}^d \left(  \sum_{i=1}^r   f_{i,j}(x,\underline{t}) \overline{B}_{i,j}(\underline{t})  \right).
\]
Applying this gauge transformation will remove from $W^{[k]}/W^{[k-1]}$ the 
	$\Psi$-spaces	
spanned by the
$\overline{B}_{i,j}(\underline{t})$ with $j\leq d$. 
Because of the condition on minimality of the rank of $M$ and the nature of the 
$\Psi$-subspaces
of $W^{[k]}/W^{[k-1]}\otimes_{\CC} \CC(T_{k})$, 
no other matrix in $W^{[k]}/W^{[k-1]}\otimes_{\CC} \CC(T_{k})$ can be removed using a gauge transformation as in (\ref{jauge}).

 \subsection{The Full Reduction} \label{full-reduction}
We now perform the reduction on the whole isotypical block $W$ with its isotypical flag
\[
     W= W^{[\mu]} \supsetneq W^{[\mu-1]}\supsetneq \cdots \supsetneq W^{[1]} \supsetneq W^{[0]}=\{0\}.
     \]
\begin{trivlist}
\item \textbf{Step 1. }    
We start from a set of parameters $T_{\mu}=\varnothing$ and algebraic conditions $\CT_{\mu}=\varnothing$. 
The matrix of the system is $A^{[\mu]}:=A$. We perform the reduction process of Section 
\ref{reduction-one-level} on $W^{[\mu]}/W^{[\mu-1]}$; we obtain a gauge transformation $P^{[\mu]}$
and $A^{[\mu-1]}:= P^{[\mu]}( A^{[\mu]} )$. If $\mu>1$, we go down to $W^{[\mu - 1]}$ and iterate 
until the level $W^{[1]}$.
The complete gauge transformation used for the successive reductions on $W$ is 
$ P_{W} (x,\underline{t}):=\displaystyle\prod_{k=1}^{\mu} P^{[k]}(x,\underline{t}) $.
It contains a set $T_W:=T_{0}$ of parameters $t_i$, subject to the set $\CT_W:=\CT_{0}$ of algebraic conditions. Note that, by construction, the matrices $P^{[k]}$ all commute pairwise so the product 
$P_{W} (x,\underline{t})$ is well defined.  
The same remark will hold for the reduction matrix of Theorem \ref{theoreme-reduction} below.\par 
\item \textbf{Step 2. } As explained in Remark \ref{rem4},  in the course of this reduction some ``off-diagonal'' element of the Lie algebra may be ``absorbed'' by turning a diagonal element of $\gdiag$ into a triangular one. Let $g_{1}(x),\dots, g_{\delta}(x) \in \mathbf{k}$ be the $\CC$ linearly independent elements appearing in the Wei-Norman decomposition of $\adiag (x)$. Let $B_{1},\dots,B_{k}$ be a basis of $\mathfrak{h}_{W}$, the $\Psi$-space obtained after this step of reduction process. Let $A_{W}(x,\underline{t}):=P_{W} (x,\underline{t})[A]$.  We have to compute the set of $f_{i}\in \mathbf{k}(T_W )$, $C_{1},\dots,C_{\delta}\in\CM_n (\CC(T_W ))$ such that $\widetilde{P}_{W}[A_{W}]=A_{W}+C_{1}g_1 +\dots +C_{\delta}g_{\delta}$, where $\widetilde{P}_{W}(x) = \id_n + \displaystylee
	 \sum_{i=1}^{k}   f_{i} B_{i}$. By Proposition \ref{propo2}, this is equivalent to solving an inhomogeneous linear differential equation in the same form as the one in Lemma \ref{exp-parametrebis} in the field $\mathbf{k}(T_W )$. This provides a new set of parameters that we must add to $T_W$ and additional algebraic constraints $\CT_W$.
Using again a Groebner basis, compute an element $\underline{t}_0\in \CT_W$ such that 
$Lie( \widetilde{P}_{W}(x,\underline{t}_0)[A_W (x,\underline{t}_0)] )$ has minimal dimension (this is a rank optimization computation).
Finally, set ${P_{W} (x):=\widetilde{P}_{W} (x,\underline{t}_0)}$. 
\end{trivlist}

\begin{theorem}\label{theoreme-reduction}
For each isotypical block $W_i$ in the isotypical decomposition $\glsub=\displaystylee \bigoplus_{i=1}^{\kappa} W_{i}$,  let $P_{W_{i}} (x)$ denote the (partial) reduction matrix constructed in the above paragraph. 
Now let $P (x):=\prod_{i=1}^{\kappa} P_{W_{i}} (x)$ 
and $A_{\textrm{red}}(x):=P (x)[A (x)]$.
Then the system $[A_{\textrm{red}}(x)]$ is in reduced form and $P(x)$ is the corresponding reduction matrix.
\end{theorem}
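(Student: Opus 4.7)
The plan is to apply Corollary~\ref{coro1}: it is enough to show that, for any gauge transformation $\widetilde{Q}(x) = \id_n + B(x)$ with $B(x) \in \hsub(\mathbf{k})$, we have $\liealg(A_{\mathrm{red}}) \subseteq \liealg(\widetilde{Q}[A_{\mathrm{red}}])$. By Proposition~\ref{propo2}, writing $B(x) = \sum f_{i,j}(x) B_{i,j}$ in a basis of $\hsub$ adapted to the isotypical flags of the $W_i$, the map $\widetilde{Q}[\bullet]$ adds to $A_{\mathrm{red}}$ contributions of the form $f_{i,j}(x)\,\Psi(B_{i,j}) - f_{i,j}'(x)\,B_{i,j}$. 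Since the isotypical decomposition $\hsub = \bigoplus W_i$ consists of $\Psi$-spaces in direct sum with pairwise non-isomorphic indecomposable factors, these contributions live independently in each $W_i$. It thus suffices to argue block by block that no further reduction on a single isotypical block $W$ is possible.

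Fix an isotypical block $W$ with its flag $W = W^{[\mu]} \supsetneq \cdots \supsetneq W^{[0]} = \{0\}$ and the corresponding adapted basis. The argument proceeds by decreasing induction on the flag level. Assume we have shown that the components of $A_{\mathrm{red}}$ in $W^{[\ell]}/W^{[\ell-1]}$ for $\ell > k$ cannot be eliminated by any $\widetilde{Q}$. At level $k$, relation~(\ref{eq4}) shows that the contribution of a term $f_{i,j}(x) B_{i,j}^{[k]}$ to the class of $\widetilde{Q}[A_{\mathrm{red}}]$ modulo $W^{[k-1]}$ is precisely $-f_{i,j}'(x) B_{i,j}^{[k]} + f_{i,j}(x) \Psi^{[k]}(B_{i,j}^{[k]})$, so any hypothetical further simplification of the $W^{[k]}/W^{[k-1]}$-component of $A_{\mathrm{red}}$ translates exactly into a system of the form~(\ref{condition-reduction}). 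By Goursat's lemma (Lemma~\ref{lem1}), irreducible $\Psi$-subspaces of $W^{[k]}/W^{[k-1]}$ being precisely the ones parametrized by $(c_1,\ldots,c_\nu)$, the maximal such family which can be eliminated corresponds to vectors $\underline{c}$ in the kernel of the matrix $M(\underline t)$ constructed in Section~\ref{reduction-one-level}. Our construction chose parameters $\underline{t}_0$ where $\mathrm{rank}(M(\underline t))$ is minimal, hence $\dim \ker M(\underline t_0)$ is maximal, and all these eliminable irreducible $\Psi$-subspaces have been removed.

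The main obstacle, and where care is required, is that a single $\widetilde{Q}$ may act simultaneously on every level of the flag, so one must show that leveraging lower levels does not allow to remove more at level $k$ than the algorithm did. This is handled by the triangular nature of the flag: the action of $f_{i,j}$ at level $k$ modulo $W^{[k-1]}$ depends only on $f_{i,j}$ itself (not on the $f$'s at lower levels), as witnessed by~(\ref{eq4}) and~(\ref{eq10}); lower-level $f$'s contribute only to lower-level components. Hence the quotient analysis at each level is intrinsic, and the inductive step is valid: the $W^{[k]}/W^{[k-1]}$-component of $\widetilde{Q}[A_{\mathrm{red}}]$ cannot be made smaller than that of $A_{\mathrm{red}}$, while by induction hypothesis the higher levels are equally unreducible.

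Combining the reductions across levels and across isotypical blocks gives $\liealg(A_{\mathrm{red}}) \subseteq \liealg(\widetilde{Q}[A_{\mathrm{red}}])$ for every admissible $\widetilde{Q}$. Corollary~\ref{coro1} then yields that $[A_{\mathrm{red}}]$ is in reduced form, and by construction $P(x) = \prod_i P_{W_i}(x)$ is the corresponding reduction matrix, completing the proof.
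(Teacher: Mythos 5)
Your proof is correct and follows essentially the same route as the paper's: both reduce the statement to Corollary \ref{coro1} and argue, block by block and level by level along the isotypical flag, that the minimality of the rank of $M(\underline{t})$ in \eqref{rang-minimal} forbids the elimination of any further irreducible $\Psi$-subspace. The only cosmetic difference is that the paper verifies the hypothesis of Corollary \ref{coro1} for gauge transformations supported on the smaller space $\mathfrak{h}_{sub,red}$ (after checking that this $\Psi$-space contains $\liealg(S_{red})$ and satisfies the assumption of Lemma \ref{lem2}), whereas you verify it directly for all of $\hsub$.
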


\begin{remark}
In many situations, like the first three examples, no parameters are required to reduce the system. In that case, no Groebner bases are needed and only linear algebra is used in the reduction process, making the algorithm quite effective.
\end{remark}

\begin{proof}
In virtue of Theorem \ref{theo1},  we deduce that there exists a gauge transformation
${Q \in \Big\{\id_n+B(x), B(x)\in \glsub\left( \mathbf{k}\right)\Big\}}$,
such that $[Q[A_{red}]]$ is in reduced form. We have $Q[A_{red}]=QP[A]$, where $QP \in \Big\{\id_n+B(x), B(x)\in \glsub\left( \mathbf{k}\right)\Big\}$.
Let $\hlie:=\liealg(A_{red})$. As above, for $\star\in \{\glie,\hlie\}$, we let 
\begin{eqnarray*}
 	\star_{diag}&:=& \left\{ \left(\begin{array}{c|c} D_1 & 0 \\\hline 0 & D_2 \end{array}\right) 
		\left|  \; \exists \,  S \; \hbox{such that } \; \left(\begin{array}{c|c} D_1 & 0 \\\hline S & D_2 \end{array}\right) \in \star
		 \right\} \right. , 	
\\
 	\star_s&:=&  \left\{ \left(\begin{array}{c|c} 0 & 0 \\\hline S & 0 \end{array}\right) 
	 	\left|  \; \exists \, D_1, D_2 \; \hbox{such that } \; \left(\begin{array}{c|c} D_1 & 0 \\\hline S & D_2 \end{array}\right) \in \star
		\right\} \right.
		\, \textrm{ and } \\
	\star_{sub}&:=&  \left\{ \left(\begin{array}{c|c} 0 & 0 \\\hline S & 0 \end{array}\right) \in \star\right\}.
\end{eqnarray*}
Since $[Q[A_{red}]]$ is in reduced form, its Lie algebra is $\glie$. By Remark \ref{rem3}, $\glie \subset \hlie$. Then, $\glie_{s} \subset \hlie_{s}$. By construction, the gauge transformation $P$ minimize $\hlie_{s}$, so $\hlie_{s} \subset \glie_{s}$ and we have 
$\hlie_{s} = \glie_{s}$. By Proposition \ref{propo2}, $\hlie_{diag} = \glie_{diag}$.  
Now,  as explained in Remark \ref{rem4}, we still could have a strict inclusion $\glie \subsetneq \hlie$. 
If that were the case,
the condition $\glie \subsetneq \hlie$ would imply  by minimality of $\glie_{s}$, that a gauge transformation in $\hsub(\mathbf{k})$ would transform the coefficient of an off-diagonal element into one that is present in the Wei-Norman decomposition of $\adiag$ (the "absorption" mechanism described in Remark \ref{rem4}, turning a diagonal element of $\hlie$ into a triangular one).
By the second minimality condition,  see Step 2 above,  this phenomenon does not occur and we conclude that $\glie = \hlie$ and
$[A_{red}]$ is in reduced form. 
\end{proof}

\section{A General Algorithm for Reducing a Differential System}\label{sec:algo}

We now have tools to put general linear differential systems,  i.e.  those whose diagonal part may have more than two diagonal blocks, into reduced form.

\subsection{An Iteration Lemma}
In order to iterate the reduction process of $\S \ref{sec4}$ to block triangular systems, we need the following lemma. 

\begin{lemma} \label{lemme:iteration}
Let $n_{1},n_{2},n_{3}\in \mathbb{N}^{*}$, and for $i\in \{1,2,3\}$, let $A_{i}\in \CM_{n_{i}}(\mathbf{k})$. Assume that the differential systems with respective matrices (in what follows, $S$ is an $n_3 \times n_2$ matrix with coefficients in $\mathbf{k}$)
 $$\left(\begin{array}{c|c|c}
A_{1}&0&0\\ \hline
0 &A_{2}& 0 \\ \hline
0 &0 &A_{3}
 \end{array}\right)
 \; \mathrm{ and }\; 
 \left(\begin{array}{c|c}
A_{2}& 0 \\ \hline
S&A_{3}
 \end{array}\right)
 $$
are in reduced form. Then, letting
$$A:= \left(\begin{array}{c|c|c}
A_{1}&0&0\\ \hline
0 &A_{2}& 0 \\ \hline
0 &S&A_{3}
 \end{array}\right),$$
 the system $[ A ]$ is in reduced form. 
\end{lemma}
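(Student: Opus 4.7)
The plan is to apply Corollary~\ref{coro1} using the $2 \times 2$ block grouping
\[
A = \begin{pmatrix} \widetilde{A}_1 & 0 \\ \widetilde{S} & \widetilde{A}_2 \end{pmatrix}, \qquad
\widetilde{A}_1 := \mathrm{diag}(A_3, A_2), \quad \widetilde{A}_2 := A_1, \quad \widetilde{S} := (0,\, S).
\]
In this grouping, the associated diagonal system is $\mathrm{diag}(\widetilde{A}_1, \widetilde{A}_2) = \mathrm{diag}(A_3, A_2, A_1)$, which is in reduced form by the first hypothesis; so we are in the setup of Section~\ref{sec:dec}. Since, by Lemma~\ref{diagsub}, the adjoint action of $\hdiag$ preserves the $(3,1)$ and $(3,2)$ off-diagonal positions independently, and since $\widetilde{S} = (0,\,S)$ has only its $(3,2)$ block nonzero, I take $\hsub$ to consist of the matrices
\[
B = \begin{pmatrix} 0 & 0 & 0 \\ 0 & 0 & 0 \\ 0 & C_2 & 0 \end{pmatrix}, \quad C_2 \in \CM_{n_1 \times n_2};
\]
the hypotheses of Lemma~\ref{lem2} are then trivially verified. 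By Corollary~\ref{coro1}, it suffices to prove that $\liealg(A) \subseteq \liealg(P[A])$ for every $P := \id_n + B$ with such $B \in \hsub(\mathbf{k})$.

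Since $B^2 = 0$, Proposition~\ref{propo2} gives $P[A] = A + [B, A] + B'$, and a direct block computation yields
\[
P[A] = \begin{pmatrix} A_3 & 0 & 0 \\ 0 & A_2 & 0 \\ 0 & S + C_2 A_2 - A_1 C_2 + C_2' & A_1 \end{pmatrix},
\]
which retains the same ``block lower-triangular with zero $(3,1)$ block'' shape as $A$. Let $\pi$ denote the projection onto the bottom-right $(n_1 + n_2) \times (n_1 + n_2)$ block; on matrices of this shape one verifies $\pi$ to be a Lie-algebra homomorphism. Then $\pi(A) = \begin{pmatrix} A_2 & 0 \\ S & A_1 \end{pmatrix}$ and $\pi(P[A]) = Q[\pi(A)]$ with $Q := \id_{n_1+n_2} + \begin{pmatrix} 0 & 0 \\ C_2 & 0 \end{pmatrix}$, so by the second hypothesis combined with Lemma~\ref{reduite-minimale},
\[
\liealg(\pi(P[A])) = \liealg(\pi(A)) =: \mathfrak{g}_{21}.
\]

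To conclude, I will compare the Wei--Norman generators of $\liealg(A)$ and $\liealg(P[A])$ block by block. The block-diagonal parts of both coincide with $\liealg(\adiag)$. For the pure-$(3,2)$ parts, Lemma~\ref{diagsub} shows that the $(1,1)$ summand of any element of $\hdiag$ annihilates every pure-$(3,2)$ matrix, so the adjoint action of $\hdiag$ on the pure-$(3,2)$ subspace factors through $\pi(\hdiag) = \liealg(\mathrm{diag}(A_2, A_1))$. Moreover $\mathrm{diag}(A_2, A_1)$ is itself in reduced form (it is the $\pi$-projection of the reduced diagonal system $\adiag$), so $\pi(\hdiag)$ coincides with the block-diagonal subalgebra of $\mathfrak{g}_{21}$. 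Under the identification via $\pi$ of the pure-$(3,2)$ subspace of $3 \times 3$ matrices with the $(2,1)$-off-diagonal subspace of $2 \times 2$ matrices, the $(3,2)$ component of $\liealg(A)$ corresponds to the off-diagonal part of $\liealg(\pi(A))$ and the $(3,2)$ component of $\liealg(P[A])$ corresponds to the off-diagonal part of $\liealg(\pi(P[A]))$; by the previous paragraph, both equal the off-diagonal part of $\mathfrak{g}_{21}$. Hence the $(3,2)$ parts agree, giving $\liealg(A) = \liealg(P[A])$, and Corollary~\ref{coro1} completes the proof.

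The main subtlety I anticipate is the identification carried out in the last paragraph: one must verify carefully both that $\mathrm{diag}(A_2, A_1)$ inherits reduced-form status from $\adiag$, and that the $\hdiag$-adjoint orbit structure on the pure-$(3,2)$ subspace is faithfully transported by $\pi$ to the corresponding $(2,1)$-subspace in the $2\times 2$ world. Both points rest on the functorial behavior of the algebraic Lie-algebra envelope under the Lie-algebra homomorphism $\pi$.
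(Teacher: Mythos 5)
Your setup is sound and genuinely different from the paper's route: you verify the criterion of Corollary~\ref{coro1} for \emph{every} gauge transformation $\id_n+B$ with $B$ supported in the $(3,2)$ position, whereas the paper invokes Theorem~\ref{theo1} to produce \emph{one} actual reduction matrix $P$, uses $\liealg(P[A])=\glie\subseteq\liealg(A)$ to see that $\glie$ has zero $(3,1)$ block and hence that $P_3$ may be taken to be $0$, and then compares $\liealg(A)$ with $\liealg(P[A])$ for that single $P$. The restriction of $\hsub$ to pure $(3,2)$ matrices is legitimate (it contains $\liealg(\asub)$ and is stable under the adjoint action, so Theorem~\ref{theo1} and Corollary~\ref{coro1} apply with this choice), the computation of $P[A]$ is correct, and the identity $\pi(P[A])=Q[\pi(A)]$ holds. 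A first, repairable, slip: Lemma~\ref{reduite-minimale} does not directly give $\liealg(\pi(P[A]))=\liealg(\pi(A))$, because it requires $Q\in H(\mathbf{k})$ with $\liealg(H)=\liealg(\pi(A))$, and $\bigl(\begin{smallmatrix}0&0\\ C_2&0\end{smallmatrix}\bigr)$ need not lie in $\liealg(\pi(A))(\mathbf{k})$; what you do get (as in the paper, via Proposition~\ref{propo1} and the reduced-form hypothesis) is the one inclusion $\liealg(\pi(A))\subseteq\liealg(\pi(P[A]))$, which happens to be the direction you need.

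The genuine gap is the final ``block by block'' comparison. You implicitly assume that $\liealg(A)$ (and $\liealg(P[A])$) decomposes as its block-diagonal component plus its pure-$(3,2)$ component, and that agreement of these components yields $\liealg(A)\subseteq\liealg(P[A])$. Neither point holds. An algebraic Lie subalgebra of block-triangular matrices need not split this way: $\mathrm{Vect}_{\CC}\bigl\{\bigl(\begin{smallmatrix}1&0\\ 1&-1\end{smallmatrix}\bigr)\bigr\}$ is algebraic (its group is $\bigl\{\bigl(\begin{smallmatrix}a&0\\ (a-a^{-1})/2&a^{-1}\end{smallmatrix}\bigr)\bigr\}$) yet contains neither its diagonal nor its off-diagonal part. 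More fundamentally, what your argument actually establishes is that the \emph{images} of $\liealg(A)$ and $\liealg(P[A])$ under the two non-injective projections $\pi_{\mathrm{diag}}$ and $\pi$ are comparable; but a subalgebra is not determined by its projections (two subalgebras of a product can have identical images in each factor while neither contains the other, e.g.\ a graph versus the full product), and $\pi$ discards all information about how the algebra sits over the $A_3$ block. So the conclusion $\liealg(A)\subseteq\liealg(P[A])$ does not follow from what you have proved. The paper sidesteps this by working with a single $P$ for which the reverse inclusion $\liealg(P[A])\subseteq\liealg(A)$ is available a priori; to complete your version you would need an additional argument genuinely comparing the full Lie algebras (for instance via the injective homomorphism $M\mapsto(\pi_{33}(M),\pi(M))$ and a dimension or generator-tracking argument), not just their shadows.
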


\begin{proof}
By our first assumption, we may apply the reduction process of $\S \ref{sec4}$, see Theorem~\ref{theoreme-reduction}, to the system with matrix
 $$\left(\begin{array}{cc|c}

		A_{1}	& 	0&0 \\ 
		0		&	A_{2}&0\\ \hline 
		0		&	S&A_{3}
	 \end{array} 
	\right).
$$
Let $n:=n_{1}+n_{2}+n_{3}$. Due to Theorem~\ref{theo1}, there exists a reduction matrix of the form 
{
\[ P:= 
\left(\begin{array}{c|c|c} \id_{n_{1}} & 0 & 0 \\ \hline 0 & \id_{n_{2}} & 0 \\ \hline P_{1} & P_2& \id_{n_{3}}  \end{array}\right)
 \in \CM_{n}(\mathbf{k}).
 \]
 }
We then find 
\begin{eqnarray*}
\Psi \left(\begin{array}{c|c}(P_{1} & P_2)\end{array}\right)
&=&
{-\begin{array}{c|c}(P_{1} & P_2)\end{array}\left(\begin{array}{c|c}
A_{1}&0\\ \hline
0 &A_{2}
 \end{array}\right)+A_{3}\begin{array}{c|c}(P_{1} & P_2)\end{array}}
 \\ & = &
\begin{array}{c|c}( {A_{3}P_{1}-P_{1}A_{1} } &
{A_{3}P_{2}-P_{2}A_{2}}).\end{array}
\end{eqnarray*}

With Proposition \ref{propo2}, we find that 
{
\[
P[A]=\left(\begin{array}{c|c|c} A_{1}& 0 & 0 \\ \hline 0 & A_{2} & 0 \\ \hline 
A_{3}P_{1}-P_{1}A_{1}  - P'_{1} &
S +   A_{3}P_{2}-P_{2}A_{2} - P'_{2} & A_{3} \end{array}\right).
\]
}
Since the latter is reduced, we know that $\liealg(P[A])=\mathfrak{g}$, where $\mathfrak{g}$ is the Lie algebra of the differential Galois group of $[A]$.
By  Remark~\ref{rem3}, we find ${\liealg (P[A])=\mathfrak{g}\subset\liealg (A)}$. By construction of $\liealg (A)$, 
any 
	matrix in $\liealg (A)$
must have the form ${ \left(\begin{array}{c|c|c} \star & 0 & 0 \\ \hline 0 & \star & 0 \\ \hline 0 & \star &\star \end{array}\right)}$. As $\mathfrak{g}\subset\liealg (A)$, the same holds for $\mathfrak{g}$.
Since $P_1$ acts only on the bottom left block of $P[A]$, we thus find that without loss of generality, we may assume $P_{1}=0$.
Then we see that the reduction matrix will have no effect on the $A_1$ block but will only act on the block 
 $\boxed{ \begin{array}{c|c}
		A_{2}	& 	0 \\ \hline
		S	&	A_{3}
	 \end{array} }.
 $
 As the latter is in reduced form, we find, see Proposition~\ref{propo1}, that for all $P_{2}$, the Lie algebra of 
 
 $$
\left(\begin{array}{c|c}   \id_{n_{2}} & 0 \\ \hline P_{2} & \id_{n_{3}}  \end{array}\right)\left[\left(\begin{array}{c|c}
		A_{2}	& 	0 \\ \hline
		S	&	A_{3}
	 \end{array}\right)\right]
	 =
	 \left(\begin{array}{c|c}
		A_{2}	& 	0 \\ \hline
		S +  {A_{3}P_{2}-P_{2}A_{2}}   - P'_{2}  
			&	A_{3} \end{array}\right)
$$ contains the Lie algebra of 
	$\left(\begin{array}{c|c}
		A_{2}	& 	0 \\ \hline
		S	&	A_{3}
	 \end{array}\right)$. It is now clear that we have the inclusion
	 ${\liealg(A)\subset \liealg (P[A])=\glie}$. By Remark \ref{rem3} $\glie\subset \liealg(A)$ and we find 
	 $\liealg(A)=\mathfrak{g}$, i.e. $[A]$ is in reduced form.
\end{proof}

\subsection{The Algorithm}
Let us now describe the global reduction process. Let ${\mathcal{A}(x)\in \CM_{n}(\mathbf{k})}$ and consider the linear differential system ${Y'(x)=\mathcal{A}(x)Y(x)}$.
The contribution of this paper to this general algorithm is part $(4)$ below.
\begin{enumerate}
\item \label{step:factor}
Factor the linear differential system, see e.~g. \cite{CoWe04a,Ba07a,Ho07b} and references therein. 
We obtain a matrix $A(x)\in \CM_{n}(\mathbf{k})$ such that the system $Y'(x)=\mathcal{A}(x)Y(x)$ is equivalent to $Y'(x)=A(x)Y(x)$, where 
$$A(x)= \left(\begin{array}{c|c|c|c}
A_{1}(x)&&&0\\ \hline
&\ddots&&\\ \hline
 & S_{i,j}(x) &\ddots& \\ \hline
& &&A_{k}(x)
 \end{array}\right),$$
and each diagonal block $ Y'(x)=A_{\ell}(x)Y(x)$, 
	$\ell=1\dots k$, 
is irreducible.

\item \label{step:diagonal}
Using for example \cite{BaClDiWe16a,ApCoWe13a},
compute a reduced form of the block-diagonal system 
$$ Y'(x)= \left(\begin{array}{c|c|c}
A_{1}(x)&&0\\ \hline
&\ddots&\\ \hline
0&&A_{k}(x)
 \end{array}\right)Y(x).$$ 
Note that the reduced form 
	$\left(\begin{array}{c|c|c}
A_{1,red}(x)&&0\\ \hline
&\ddots&\\ \hline
0&&A_{k,red}(x)
 \end{array}\right)$ 
 may have entries in a finite algebraic extension $\mathbf{k_{0}}$ of $\mathbf{k}$. 
 Let $P_{diag}(x)\in \mathrm{GL}_{n}(\mathbf{k_{0}})$ be the corresponding gauge transformation.
 \item \label{step:3} Compute $$A_{diag,red}(x):=P_{diag}(x)[A(x)]=\left(\begin{array}{c|c|c|c}
A_{1,red}(x)&&&0\\ \hline
 &\ddots&&\\ \hline
\vdots & \ddots &A_{k-1,red}(x)& \\ \hline
\mathfrak{S}_{k,1}(x)& \hdots &\mathfrak{S}_{k,k-1}(x)&A_{k,red}(x)
 \end{array}\right) .$$
\item \label{step:4}
Let $ {\mathcal A}_k := A_{k,red}$ and $\ell:=k$.
\\
While $\ell\geq 2$ do 
	\begin{enumerate}
	\item \label{step:iteration}
	Apply the reduction process of $\S \ref{sec4}$, see Theorem \ref{theoreme-reduction} with $\mathbf{k}$ replaced by $\mathbf{k_{0}}$,  see Remark \ref{rem5},  to compute a reduced form of 
		$$\left(\begin{array}{c|c}
			A_{\ell-1,red }(x)		&	0\\ \hline
			\mathfrak{S}_{\ell-1}(x)&  \mathcal{A}_{\ell}(x)
		 \end{array}\right),\hbox{ where }\mathfrak{S}_{\ell-1}(x) := \begin{pmatrix}
		\mathfrak{S}_{\ell,\ell-1}(x)\\ 
		\vdots \\
		\mathfrak{S}_{k,\ell-1}(x)
		\end{pmatrix},
		$$
 is the block column below $A_{\ell-1,red}(x)$ in $A_{diag,red}(x)$.
	\item 
		Let $\mathcal{A}_{\ell-1}(x)$ be this new reduced form. Let $\ell:=\ell-1$ and iterate.
	\end{enumerate}
End do.
\end{enumerate}
The correctness of Step~\ref{step:iteration}
is ensured by Lemma~\ref{lemme:iteration}. It follows that the resulting system $Y'(x)=\mathcal{A}_{1}(x)Y(x)$ is a reduced form of $Y'(x)=A(x)Y(x)$. 

\begin{remark}\label{rem5}
In Step \ref{step:iteration},  we may have to introduce an algebraic extension $\mathbf{k_{0}}$ of $\C(x)$ and compute  solutions in $\mathbf{k_{0}}$ of linear differential systems with  coefficients in $\mathbf{k_{0}}$ and a parameterized right hand side. This can be reduced (see \cite{Si91a}) to computing solutions in $\mathbf{k}$ of a  system of bigger dimension. From $\S \ref{base-field}$, we see that it is still possible. Although it would require some extra work, it would not be a practical obstacle.
\end{remark}

\begin{example}
When $k=3$, Step \ref{step:4} performs the following. Consider the matrix given by Step \ref{step:3}.
$$A_{diag,red}(x)=\left(\begin{array}{c|c|c}
A_{1,red}(x)&0&0\\ \hline
\mathfrak{S}_{2,1}(x) &A_{2,red}(x)&0 \\ \hline
\mathfrak{S}_{3,1}(x)&\mathfrak{S}_{3,2}(x)&A_{3,red}(x)
 \end{array}\right).$$ 
 We start by reducing the matrix 
 $\left(\begin{array}{c|c}
			A_{2,red }(x)		&	0\\ \hline
			\mathfrak{S}_{3,2}(x)&  A_{3,red}(x)
		 \end{array}\right)$ 
to obtain $\mathcal{A}_{2}(x)$. Then we reduce 
		$\left(\begin{array}{c|c}
A_{1,red}(x)&0\\ \hline
\mathfrak{S}_{3}(x) &\mathcal{A}_{2}(x)
 \end{array}\right)$, with $\mathfrak{S}_{3}(x) := \begin{pmatrix}
		\mathfrak{S}_{2,1}(x)\\ 
		\mathfrak{S}_{3,1}(x)
		\end{pmatrix}$, 
to obtain the final reduced matrix.
\end{example}

\section{Computation of the Lie algebra of the differential Galois group}\label{sec:lie}
We consider
a differential system $[\mathcal{A}(x)] : \,Y'(x)=\mathcal{A}(x)Y(x)$ with $\mathcal{A}(x)\in \mathcal{M}_{n}(\mathbf{k})$.  In this section, we review how to compute the Lie algebra $\glie$ of the differential Galois group $G$. 
We first assume that $[\mathcal{A}(x)]$ is in reduced form; the non-reduced case is addressed in Remark~\ref{rem1}. 
We stress the fact that, from now on, all the results are mostly well known and are included for completeness.
\\
We can find a  Wei-Norman decomposition of $\mathcal{A}(x)$. 
We compute the smallest $\CC$-vector space containing its generators and stable under the Lie bracket. Let $B_{1},\dots, B_{\sigma}$ be a basis of this space. We know that the smallest \emph{algebraic} Lie algebra containing the $B_{i}$ is $\glie$. \par 

An algorithm for computing the smallest algebraic Lie algebra containing the $B_i$ can be found in \cite{FieGra}. In order to be self-contained, we are going to summarize this work.\par 
 It follows from \cite{Chev}, Chapter II, Theorem 14, that the Lie algebra generated by a finite family of algebraic Lie algebras is algebraic. Therefore, to compute $\glie$ it is sufficient to be able to compute 
 {
 $\glie_{i}:= \liealg(B_{i})$  for all $i \in \{ 1,\dots, \sigma \}$.
 }\par 
To be able to  compute $\glie$, we are thus reduced to the  following problem: {given} 
a matrix ${B\in  \CM_{n}(\CC)}$, compute  $\liealg(B)$.
Let $B=D+N$  be the additive Jordan decomposition of $B$, where $D$ is diagonalizable, $N$ is nilpotent, and $DN=ND$. From \cite{Chev}, Chapter II, Theorem 10, we deduce that $$\liealg(B)=\liealg(D)\oplus \liealg(N).$$
Let us compute $\liealg(N)$. The matrix $N$ is nilpotent. As we can see in \cite{Chev}, Chapter~II, $\S 13$, Proposition 1, the $\CC$-vector space spanned by $N$ is an algebraic Lie algebra, with corresponding algebraic group 
$\{\exp (\alpha N), \alpha\in \CC\}$, which is a vector group.
Therefore, $$\liealg(N)=\mathrm{Vect}_{\CC}(N).$$
 Let us compute $\liealg (PD_{0}P^{-1})$, where $PD_{0}P^{-1}=D$, $P$ is an invertible matrix, and ${D_{0}=\mathrm{Diag}(d_{1},\dots,d_{n})}$ is a diagonal matrix. Set $$\Delta := \Big\{(e_{1},\dots, e_{n})\in \mathbb{Z}^{n}\Big| \sum_{\ell=1}^{n} e_{\ell}d_{\ell}=0 \Big\}.$$
By Chevalley, see for instance \cite{FieGra}, Theorem 2, we obtain 
  $$\liealg(D_{0})=\left\{\mathrm{Diag}(a_{1},\dots,a_{n})\in \CC^{n}\Big| \sum_{\ell=1}^{n} e_{\ell}a_{\ell}=0, \forall (e_{1},\dots, e_{n})\in \Delta \right\},$$
  and
 $$\liealg(D)=P\liealg (D_{0})P^{-1}.$$
\begin{remark}\label{rem1}
If we start from a system $[\mathcal{A}(x)]$ which is not in reduced form, the algorithm presented in $\S \ref{sec:algo}$ will compute a finite field extension $\mathbf{k}_0$ of $\mathbf{k}$ and a matrix ${\mathcal{A}_{red}(x)\in \CM_{n}(\mathbf{k}_{0})}$ such that $[\mathcal{A}_{red}(x)]$ is a reduced form of $[\mathcal{A}(x)]$.
\\
 Let $G_{\mathbf{k}_0}$ be the differential Galois group over $\mathbf{k}_0$. Since the gauge transformation that performs the reduction has entries in $\mathbf{k}_{0}$, and the Galois group is invariant under gauge transformation, $G_{\mathbf{k}_0}$ is  the differential Galois group of $[\mathcal{A}_{red}(x)]$ over $\mathbf{k}_0$. By Lemma~32 in  \cite{ApCoWe13a}, $G_{\mathbf{k}_0}$ is connected.
Note that by the Galois correspondence, see \cite{PS03}, Proposition 1.34, $G/G_{\mathbf{k}_0}$ is finite, which means that $G_{\mathbf{k}_0}$ is the connected component of the identity of $G$. So the Lie algebras of $G$ and $G_{\mathbf{k}_0}$ coincide and we may apply the above construction to obtain 
$\liealg(\mathcal{A}_{red}(x))$ and hence $\glie$.
\end{remark}

\section{Computation of the differential Galois group of a reduced form}\label{sec7}
Let
$\mathcal{A}(x)\in \mathcal{M}_{n}(\mathbf{k})$; let $G$ be the differential Galois group of $[\mathcal{A}(x)]$ and $\glie$ be the Lie algebra. We now know, using $\S \ref{sec:lie}$, how to compute $\glie$.
The goal of this section is to explain how, theoretically, one may recover $G$ from $\glie$ when $[\mathcal{A}(x)]$ { is in reduced form}. 
The problem of recovering a connected group from its Lie algebra is solved in \cite{Gr09a}. In this section, we propose a solution, based on ideas from \cite[Section 3]{DeJeKo05a}, but we do not claim originality nor algorithmic efficiency in what follows; this section is included for completeness.
\par 
 Since $[\mathcal{A}(x)]$ is in reduced form, by Lemma 32 in  \cite{ApCoWe13a},  we obtain that $G$ is connected.
Let $B_{1},\dots,B_{\sigma}$ be a basis of the $\CC$-vector space $\glie$. 
As $G$ is connected, it is the smallest algebraic group containing $\exp(\glie)$.
It follows that 
	\[G=\overline{\langle \exp(B_{1}),\dots,\exp(B_{\sigma}) \rangle }\] 
it is the smallest algebraic group that contains the matrices $\exp(B_{i})$. 
\par 
So let us compute $\overline{\langle \exp(B_{1}),\dots,\exp(B_{\sigma}) \rangle }$. This problem has been solved in full generality in \cite[Section 3]{DeJeKo05a}.
{It is simpler here, since the algebraic group we are looking for is connected. }We start by a classical observation taken from \cite[Section 3.1]{DeJeKo05a}.\par 
\begin{lemma}[\cite{DeJeKo05a}, Section 3.1]
\label{lemsuperbien}
Let $V_{1}$, $V_{2}$ be affine varieties over $\CC$ and $\psi :V_{1}\rightarrow V_{2}$ be a morphism of affine varieties. 
Let $X\subset V_{1}$ be a Zariski closed subset. 
If we have generators for the vanishing ideal $\mathfrak{f}\subset \CC[V_{1}]$ of $X$, 
then we may 
compute $\overline{\langle \psi (X)\rangle}$.
\end{lemma}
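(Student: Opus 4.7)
The plan is to reduce this to an elimination calculation with Gröbner bases. First I would fix closed embeddings $V_1\subset\mathbb{A}^n$ and $V_2\subset\mathbb{A}^m$, so the coordinate rings are presented as $\CC[V_1]=\CC[x_1,\dots,x_n]/I(V_1)$ and $\CC[V_2]=\CC[y_1,\dots,y_m]/I(V_2)$ by finite generating sets, and $\psi$ is encoded by polynomials $\psi_1(x),\dots,\psi_m(x)\in\CC[x]$. The comorphism $\psi^{\sharp}:\CC[V_2]\to\CC[V_1]$ sends $g$ to $g\circ\psi$.

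The key observation is that, by definition of Zariski closure, a polynomial $g\in\CC[V_2]$ vanishes on $\overline{\psi(X)}$ if and only if $\psi^{\sharp}(g)$ vanishes on $X=V(\mathfrak{f})$, i.e.\ $\psi^{\sharp}(g)\in\mathfrak{f}$. Hence the ideal we want is the kernel of the composite $\CC[V_2]\xrightarrow{\psi^{\sharp}}\CC[V_1]\twoheadrightarrow\CC[V_1]/\mathfrak{f}$, which is the scheme-theoretic image ideal and whose radical defines $\overline{\psi(X)}$.

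To compute this kernel, I would use the standard graph/elimination trick: form the ideal
\[
 J\;:=\;\bigl(\,I(V_1),\;I(V_2),\;\mathfrak{f},\;y_1-\psi_1(x),\dots,y_m-\psi_m(x)\,\bigr)\;\subset\;\CC[x_1,\dots,x_n,y_1,\dots,y_m],
\]
using the supplied generators of $\mathfrak{f}$ (and of $I(V_1), I(V_2)$), and set $I\!\bigl(\overline{\psi(X)}\bigr)=J\cap\CC[y_1,\dots,y_m]$. The inclusion $\supseteq$ is immediate; for $\subseteq$ one notes that the zero locus of $J$ in $V_1\times V_2$ is exactly the graph of $\psi\!\restriction_X$, so any polynomial in $J\cap\CC[y]$ vanishes on $\psi(X)$ and hence on its Zariski closure. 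This elimination ideal is extracted from a Gröbner basis of $J$ computed with respect to any monomial order that eliminates the $x$-variables (e.g.\ a lexicographic order with $x\succ y$, or a block elimination order); the elements of the basis lying in $\CC[y]$ are generators of $I\!\bigl(\overline{\psi(X)}\bigr)$.

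The proof therefore rests on two ingredients, neither of which is a real obstacle: the algebro-geometric identity between the image ideal and the elimination ideal, which is classical, and Buchberger's algorithm, which guarantees effectiveness. The only genuinely delicate point is bookkeeping in worst-case complexity of the Gröbner basis step, but that is a matter of efficiency rather than correctness, and lies outside what the lemma claims.
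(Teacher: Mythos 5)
Your proposal is correct and is essentially the paper's own argument: the paper identifies the ideal of $\overline{\langle\psi(X)\rangle}$ as $(\psi^{\star})^{-1}(\mathfrak{f})$ and says it is computable by a Gr\"obner basis, which is exactly the preimage-kernel identification and elimination-ideal computation you spell out. (One tiny remark: since $\mathfrak{f}$ is the vanishing ideal of $X$ it is radical, so the kernel you compute is already the vanishing ideal of $\overline{\psi(X)}$ and no further radical needs to be taken.)
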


\begin{proof} For self-containedness, we reproduce the proof from \cite[Section 3.1]{DeJeKo05a}.
The morphism  $\psi :V_{1}\rightarrow V_{2}$ corresponds to a homomorphism  $\psi^{\star} :\CC[V_{2}]\rightarrow \CC[V_{1}]$ of the coordinate rings 
{(see \cite{CoLiOS07a}, Proposition 8 in Chapter 4])}. 
Given  generators of the vanishing ideal $\mathfrak{f}\subset \CC[V_{1}]$ of $X$, one can compute  generators of the ideal $(\psi^{\star})^{-1}(\mathfrak{f})$ using a Groebner basis. The latter are the generators of $\overline{\langle \psi (X)\rangle}$.
\end{proof}

We begin by computing the Zariski closure of the group generated by a single matrix $M:=\exp(B)$, with $B\in \glie$.
We have a Dunford decomposition $B=S+N$ with $S$ semi-simple, $N$ nilpotent and $[S,N]=0$.
So $\exp(B)=D\cdot U$ with $D:=\exp(S)$ diagonalizable and $U:=\exp(N)$ unipotent.
 
As $[D,U]=0$, we find that ${\overline{\langle M \rangle }=\overline{\overline{\langle D \rangle } .\overline{\langle U \rangle }}}$. Using Lemma \ref{lemsuperbien}, if we are able to compute $\overline{\langle D \rangle }$ and $\overline{\langle U \rangle }$, we see that we may compute $\overline{\langle M \rangle }=\overline{\overline{\langle D \rangle } .\overline{\langle U \rangle }}$.
Thus, what is left for us to do is to treat the cases where $M$ is unipotent or diagonalizable. \par 

We start with the unipotent case. As $N$ is nilpotent, the map 
\[ \psi :   t\mapsto \exp(t N ) \]
is an algebraic map from $V_1:=\CC$ to $V_2:=\mathrm{GL}_n (\CC)$; moreover,
 $ \exp(t N )$ is a linear combination of a finite number of powers of $N$. Pick a matrix $M=(x_{i,j})$ of indeterminates 
 and eliminate $t$ from the equations $M-\exp(t N )=0$; this makes sense because $\exp(t N )$ is polynomial in $t$.
As shown in Lemma~\ref{lemsuperbien}, this allows us to recover the Zariski closure of the image of $\CC$ under $\psi$, which is $\overline{\langle U \rangle }$.
\par 

We now treat the diagonalizable case. We have $D=\exp(S)$ with $S$ semi-simple.
We may diagonalize $S$ so that, letting 
\[ \mathcal{D} := \mathrm{Diag}(\lambda_{1},\ldots , \lambda_{n}), \quad D_0:=\exp(\mathcal{D})=\mathrm{Diag}(d_{1},\ldots ,d_{n})
\textrm{ with } d_i=\exp(\lambda_i), \]
we have $S= P \mathcal{D}  P^{-1}$
and $D= P  D_0   P^{-1}$ for some $P\in \mathrm{GL}_{n}(\CC)$.
In order to compute  $\overline{\langle D \rangle }$,  it is sufficient to understand the algebraic relations between the eigenvalues of $D$. This will be done in the same way as in the computation of the Lie algebra of a diagonal matrix. 
As we see in \cite{DeJeKo05a}, {Section 3.3.}, the ideal that generates $\overline{\langle D_{0} \rangle }$ will be obtained from the $e_{1}, \ldots,e_{n}\in \CC$ such that $(d_{1})^{e_{1}}\cdots (d_{n})^{e_{n}}=1$ with $d_i=\exp(\lambda_i)$. So, we set 
\[ \Delta' := \Big\{(e_{1},\dots, e_{n})\in \mathbb{Z}^{n}\Big| \sum_{\ell=1}^{n} e_{\ell} \lambda_{\ell}=0 \Big\}, \]
and we find
\[ \overline{\langle D_{0} \rangle }=\Big\{ %
	(d_{1},\dots, d_{n})\in (\CC^*)^{n}\Big| %
	\prod_{\ell=1}^{n} (d_{\ell})^{e_{\ell}}=1, \forall (e_{1},\dots, e_{n})\in \Delta' \Big\}. \]
As the map $X\mapsto P   X   P^{-1}$ is algebraic, Lemma \ref{lemsuperbien} tells us that we may  compute $\overline{\langle D \rangle }$ from the relation 
\[ \overline{\langle D \rangle }=P \overline{\langle D_{0} \rangle }  P^{-1}. \]

Now that we know how to compute the $\overline{\langle \exp(B_{i}) \rangle }$, for every $i \in\{ 1, \dots,\sigma \}$,
Lemma \ref{lemsuperbien} shows that we may compute $\overline{\langle \exp(B_{1}),\dots,\exp(B_{\sigma}) \rangle }=G$
and we are done.\\ 
\par

\begin{remark}
If we start from a system $[\mathcal{A}(x)]$ which is not  in reduced form, the algorithm presented in $\S \ref{sec:algo}$ will compute a finite field extension $\mathbf{k}_0$ of $\mathbf{k}$ and a matrix ${\mathcal{A}_{red}(x)\in \CM_{n}(\mathbf{k}_{0})}$ such that $[\mathcal{A}_{red}(x)]$ is a reduced form of $[\mathcal{A}(x)]$. By Remark \ref{rem1}, the differential Galois groups over $\mathbf{k}_0$ of $[\mathcal{A}_{red}(x)]$ and $[\mathcal{A}(x)]$ coincide and are equal  to $G^{\circ}$, the connected component of the identity of $G$. 
The defining ideal of $G^{\circ}$ gives the algebraic relations over $\mathbf{k}_0$ inside the Picard-Vessiot extension.  
\end{remark}

\begin{example}
{
We have given in $\S \ref{nilpotent-example-continued}$, the generators of the Lie algebra of the nilpotent example. 
A Zariski-dense subgroup of the differential Galois group is generated by $t_{1}\mathrm{Id}_{8}$ with $t_1\in \CC^*$, and the family of the following matrices, with  $t_{2},\dots,t_{5}\in \CC$:}
\begin{tiny}
\begin{eqnarray*}
 \left( \begin {array}{cccc|cccc} 1&0&t_{{2}}&0&0&0&0&0
\\ 0&1&0&-t_{{2}}&0&0&0&0\\ 0&0&1&0
&0&0&0&0\\0&0&0&1&0&0&0&0\\\hline t_{{2
}}&0&0&0&1&0&t_{{2}}&0\\ 0&-t_{{2}}&0&0&0&1&0&-t_{{2
}}\\ 0&0&-t_{{2}}&0&0&0&1&0\\ 0&0&0
&t_{{2}}&0&0&0&1\end {array} \right) ,  &
\left( \begin {array}
{cccc|cccc} 1&0&0&0&0&0&0&0\\ 0&1&t_{{3}}&0&0&0&0&0
\\ 0&0&1&0&0&0&0&0\\ 0&0&0&1&0&0&0
&0\\\hline 0&0&0&0&1&0&0&0\\ t_{{3}}&0&0
&0&0&1&t_{{3}}&0\\ 0&0&0&0&0&0&1&0
\\0&0&-t_{{3}}&0&0&0&0&1\end {array} \right) ,\\
\displaystyle
 \left( \begin {array}{cccc|cccc} 1&0&0&0&0&0&0&0\\ 2
\,t_{{4}}&1&0&0&0&0&0&0\\ 0&0&1&0&0&0&0&0
\\ 0&0&2\,t_{{4}}&1&0&0&0&0\\\hline -t_{
{4}}&0&0&0&1&0&0&0\\0&t_{{4}}&0&0&2\,t_{{4}}&1&0&0
\\ 0&0&-t_{{4}}&0&0&0&1&0\\ 0&0&0&
t_{{4}}&0&0&2\,t_{{4}}&1\end {array} \right) ,& \left( \begin {array}
{cccc|cccc} 1&0&0&0&0&0&0&0\\ 0&1&0&0&0&0&0&0
\\ 0&0&1&0&0&0&0&0\\0&0&0&1&0&0&0
&0\\ \hline  0&0&0&0&1&0&0&0\\ 0&0&t_{{5}}
&0&0&1&0&0\\ 0&0&0&0&0&0&1&0\\ 0&0
&0&0&0&0&0&1\end {array} \right). 
\end{eqnarray*}
\end{tiny}Using the above procedure, one may recover the equations of the Galois group from the data of these generators.
\end{example}

\bibliographystyle{amsalpha} 
\bibliography{ADW_MRS_biblio}
\end{document}